\def\@settitle{%
  \vspace*{-20pt}
  \begin{flushleft}%
    \baselineskip14\p@\relax
    \normalfont\bfseries\LARGE
%    \uppercasenonmath\@title
    \@title
  \end{flushleft}%
}
\def\@setauthors{%
  \begingroup
  \def\thanks{\protect\thanks@warning}%
  \trivlist
  %\centering
  \raggedright
  \large \@topsep30\p@\relax
  \advance\@topsep by -\baselineskip
  \item\relax
  \author@andify\authors
  \def\\{\protect\linebreak}%
%  \MakeUppercase{\authors}%
  \authors
  \ifx\@empty\contribs
  \else
    ,\penalty-3 \space \@setcontribs
    \@closetoccontribs
  \fi
  \normalfont
  \@setaddresses
  \endtrivlist
  \endgroup
}
\def\@setaddresses{\par
  \nobreak \begingroup
  \small
  \def\author##1{\nobreak\addvspace\smallskipamount}%
  \def\\{\unskip, \ignorespaces}%
  \interlinepenalty\@M
  \def\address##1##2{\begingroup
    \par\addvspace\bigskipamount\noindent
    \@ifnotempty{##1}{(\ignorespaces##1\unskip) }%
    {\ignorespaces##2}\par\endgroup}%
  \def\curraddr##1##2{\begingroup
    \@ifnotempty{##2}{\nobreak\noindent\curraddrname
      \@ifnotempty{##1}{, \ignorespaces##1\unskip}\/:\space
      ##2\par}\endgroup}%
  \def\email##1##2{\begingroup
    \@ifnotempty{##2}{\nobreak\noindent E-mail address%
      \@ifnotempty{##1}{, \ignorespaces##1\unskip}\/:\space
      \ttfamily##2\par}\endgroup}%
  \def\urladdr##1##2{\begingroup
    \def~{\char`\~}%
    \@ifnotempty{##2}{\nobreak\noindent\urladdrname
      \@ifnotempty{##1}{, \ignorespaces##1\unskip}\/:\space
      \ttfamily##2\par}\endgroup}%
  \addresses
  \endgroup
  \global\let\addresses=\@empty
}
\def\@setabstracta{%
    \ifvoid\abstractbox
  \else
    \skip@25\p@ \advance\skip@-\lastskip
    \advance\skip@-\baselineskip \vskip\skip@
%    \hrule\vskip2pt
    \box\abstractbox
    \prevdepth\z@ % because \abstractbox is a vtop
%    \vskip2pt\hrule
    \vskip-10pt
  \fi
}
\renewenvironment{abstract}{%
  \ifx\maketitle\relax
    \ClassWarning{\@classname}{Abstract should precede
      \protect\maketitle\space in AMS document classes; reported}%
  \fi
  \global\setbox\abstractbox=\vtop \bgroup
    \normalfont\small
    \list{}{\labelwidth\z@
      \leftmargin0pc \rightmargin\leftmargin
      \listparindent\normalparindent \itemindent\z@
      \parsep\z@ \@plus\p@
      
    }%
    \item[\hskip\labelsep\bfseries\abstractname.]%
}{%
  \endlist\egroup
  \ifx\@setabstract\relax \@setabstracta \fi
}
\def\section{\@startsection{section}{1}%
  \z@{-1.2\linespacing\@plus-.5\linespacing}{.8\linespacing}%
  {\normalfont\bfseries\Large}}
\def\subsection{\@startsection{subsection}{2}%
  \z@{-.8\linespacing\@plus-.3\linespacing}{.3\linespacing\@plus.2\linespacing}%
  {\normalfont\bfseries}}
\def\subsubsection{\@startsection{subsection}{3}%
  \z@{.7\linespacing\@plus.2\linespacing}{-1.5ex}%
  {\normalfont\itshape}}
\def\@secnumfont{\bfseries}
\def\to{\mathchoice{\longrightarrow}{\rightarrow}{\rightarrow}{\rightarrow}}
\newcommand{\shortxra}[2][]{\ext@arrow 0359\rightarrowfill@{#1}{#2}}
\def\longrightarrowfill@{\arrowfill@\relbar\relbar\longrightarrow}
\newcommand{\longxra}[2][]{\ext@arrow 0359\longrightarrowfill@{#1}{#2}}
\renewcommand{\xrightarrow}[2][]{\mathchoice{\longxra[#1]{#2}}%
  {\shortxra[#1]{#2}}{\shortxra[#1]{#2}}{\shortxra[#1]{#2}}}
\def\otimesover#1{\mathbin{\mathop{\otimes}_{#1}}}
\def\Nopagebreak{\@nobreaktrue\nopagebreak}
\theoremstyle{plain}
\newtheorem{theorem}{Theorem}[section]
\newtheorem{proposition}[theorem]{Proposition}
\newtheorem{corollary}[theorem]{Corollary}
\newtheorem{lemma}[theorem]{Lemma}
\theoremstyle{definition}
\newtheorem{definition}[theorem]{Definition}
\newtheorem{question}[theorem]{Question}
\newtheorem{remark}[theorem]{Remark}
\def\Z{\mathbb{Z}}
\def\Q{\mathbb{Q}}
\def\R{\mathbb{R}}
\def\C{\mathbb{C}}
\def\N{\mathcal{N}}
\def\cP{\mathcal{P}}
\def\T{\mathcal{T}}
\def\TSL{\mathcal{T}^\mathrm{SL}}
\def\Ker{\operatorname{Ker}}
\def\Hom{\operatorname{Hom}}
\def\Tor{\operatorname{Tor}}
\def\sign{\operatorname{sign}}
\def\rank{\operatorname{rank}}
\def\BH{\operatorname{BH}}
\def\lk{\operatorname{lk}}
\def\ldim{\dim^{(2)}}
\def\lsign{\sign^{(2)}}
\def\rhot{\rho^{(2)}}
\def\wt{\widetilde}
\def\spin{\text{spin}}
\def\Wh{\operatorname{Wh}}
\newcommand{\eps}{\varepsilon}
\let\oldsharp=\# \def\#{\mathbin{\oldsharp}}
\def\bigconnsum{\mathop{\#}\limits}
\begin{document}

\title%
[Covering link calculus and the bipolar filtration]
{Covering link calculus and the bipolar filtration of topologically slice
  links}

\author{Jae Choon Cha}
\address{
  Department of Mathematics\\
  POSTECH\\
  Pohang 790--784\\
  Republic of Korea\\
  and\linebreak
  School of Mathematics\\
  Korea Institute for Advanced Study \\
  Seoul 130--722\\
  Republic of Korea
}
\email{jccha@postech.ac.kr}

\author{Mark Powell}
\address{
  Department of Mathematics\\
  Indiana University \\
  Bloomington, IN 47405\\
  USA
}
\email{macp@indiana.edu}

\def\subjclassname{\textup{2010} Mathematics Subject Classification}
\expandafter\let\csname subjclassname@1991\endcsname=\subjclassname
\expandafter\let\csname subjclassname@2000\endcsname=\subjclassname
\subjclass{%
  57M25, % Knots and links in $S^3$
%  57M27, % Invariants of knots and 3-manifolds
  57N70.%, % Cobordism and concordance (in low dimension)
%  57Q60; % Cobordism and concordance (in high dimension)
%  57M07, % Topological methods in group theory
}

% \keywords{}

\begin{abstract}
  The bipolar filtration introduced by T. Cochran, S. Harvey, and
  P. Horn is a framework for the study of smooth concordance of
  topologically slice knots and links.  It is known that there are
  topologically slice 1-bipolar knots which are not 2-bipolar. For
  knots, this is the highest known level at which the filtration does
  not stabilize.  For the case of links with two or more components,
  we prove that the filtration does not stabilize at any level: for
  any $n$, there are topologically slice links which are $n$-bipolar
  but not $(n+1)$-bipolar.  In the proof we describe an explicit
  geometric construction which raises the bipolar height of certain
  links exactly by one.  We show this using the covering link
  calculus.  Furthermore we discover that the bipolar filtration of
  the group of topologically slice string links modulo smooth
  concordance has a rich algebraic structure.
\end{abstract}

\maketitle

\section{Introduction}

Since the stunning work of S. Donaldson and M. Freedman in the early
1980s, the smoothing of topological 4-manifolds has been a central
subject in low dimensional topology.  While there have been
significant advances in this area, we are still far from having a
complete understanding.  An experimental lab for the study of this
difference in categories is the comparison between smooth and
topological concordance of knots and links in~$S^3$.  In this context
various techniques from Donaldson and Seiberg-Witten theory, to more
recent tools arising from Heegaard Floer and Khovanov homology, have
been used to give exciting results.  In particular, since A. Casson
observed that Donaldson's work could be applied to show that there are
topologically slice knots which are not smoothly slice, smooth concordance
of topologically slice knots and links has been studied extensively.

In order to understand the structure of topologically slice knots and
links, T. Cochran, S. Harvey, and P. Horn introduced a framework for
the study of smooth concordance in their recent
paper~\cite{Cochran-Harvey-Horn:2012-1}.  This is an intriguing
attempt at describing a global picture of the world of topologically
slice links.  They defined the notion of \emph{$n$-bipolarity} of
knots and links, as an approximation to honest slicing whose accuracy
is measured in terms of the derived series of the fundamental group of
slice disk complements in certain positive/negative definite
4-manifolds.  (For a precise definition, see Definition
\ref{definition:homotopy-n-positon} or
\cite[Definition~2.1]{Cochran-Harvey-Horn:2012-1}.)  This refines
Cochran-Orr-Teichner's $(n)$-solvable filtration which organizes the
study of topological concordance.  Note that a topologically slice
link is $(n)$-solvable for all $n$, so that the solvable filtration
contains no information about the difference in categories.

For each $m$, the collection $\T_n$ of concordance classes of
\emph{topologically slice} $n$-bipolar links with $m$ components form
a filtration
\[
\{[\text{unlink}]\} \subset \cdots \subset \T_2 \subset \T_1\subset
\T_0 \subset \T=\frac{\{\text{topologically slice $m$-component
  links}\}}{\text{concordance}}.
\]
A smoothly slice link lies in $\T_n$ for all $n$.  An
important feature which helps to justify this theory is that
previously known smooth concordance obstructions are
related to the low level terms.  In particular, for 1-bipolar knots,
the following obstructions to being slice vanish: the $\tau$-invariant
and the $\epsilon$-invariant from Heegaard Floer Knot homology, the
$s$-invariant from the Khovanov homology (and consequently the
Thurston-Bennequin invariant), and the Heegaard Floer correction term
$d$-invariant of $\pm1$-surgery manifolds and prime power fold cyclic
branched covers, as well as gauge theoretic obstructions derived from
work of Fintushel-Stern, Furuta, Endo, and Kirk-Hedden.  For more
details the reader is referred to~\cite{Cochran-Harvey-Horn:2012-1}.

A fundamental question is whether the filtration is non-trivial at
every level.  This is difficult to answer because, as discussed above,
known smooth invariants vanish in the higher terms of the filtration.
The best previously known result, due
to~\cite{Cochran-Harvey-Horn:2012-1,Cochran-Horn:2012-1}, is that
\[
\T_2 \subsetneq \T_1 \subsetneq \T_0 \subsetneq \T
\]
for knots.  That is, for $n = -1, 0, 1$, there are topologically slice
$n$-bipolar knots which are not $(n+1)$-bipolar, where for convenience
``topologically slice $(-1)$-bipolar'' is understood as
``topologically slice.''  Consequently, for links with any given
number of components, the filtration is non-trivial at level $n$ for
each $n\le 1$.  The knots of Hedden-Livingston-Ruberman
from~\cite{Hedden-Livingston-Ruberman:2010-01}, which were the first
examples of topologically slice knots which are not smoothly
concordant to knots with Alexander polynomial one, are also nontrivial
in~$\T/\T_0$.

The main result of this paper is to show the non-triviality of the
filtration at every level for links.

\begin{theorem}\label{theorem:main_theorem}
  For any $m\ge2$ and $n\ge 0$, there are topologically slice
  $m$-component links which are $n$-bipolar but not $(n+1)$-bipolar.
\end{theorem}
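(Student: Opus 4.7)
The plan is to prove Theorem~\ref{theorem:main_theorem} by induction on $n$, applying a geometric operation $\Phi$ on $m$-component links which raises the bipolar height by exactly one. The upper bound is supplied by an explicit bounding 4-manifold construction, while the sharp lower bound is obtained using covering link calculus, which reduces the $(n+2)$-bipolarity question for $\Phi(L)$ to the $(n+1)$-bipolarity question for $L$.

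For the base case $n=0$, I would use the known existence of a topologically slice knot $K$ that is $0$-bipolar but not $1$-bipolar, from \cite{Cochran-Harvey-Horn:2012-1,Cochran-Horn:2012-1}. A seed $m$-component link $L^{(0)}$ is built by placing $K$ as one component together with additional components arranged so that (a) $L^{(0)}$ remains topologically slice and $0$-bipolar, and (b) restricting to the $K$-component recovers $K$, which forces $L^{(0)}\notin\T_1$. For the inductive step I would define $\Phi$ by infection along an unknotted curve $\eta$ in the link exterior lying deep in the derived series of $\pi_1$, with an auxiliary topologically slice infection knot $J$. Then $\Phi$ preserves topological sliceness, because $\eta$ bounds an embedded topological disk in the slice-disk exterior and $J$ is topologically slice; and if $L$ is $n$-bipolar, then $\Phi(L)$ is $(n+1)$-bipolar, by extending the $n$-positon/negaton bounding $L$ across a standard building block glued in along the infection neighborhood.

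The main obstacle is to show that $\Phi$ does not raise the bipolar height by more than one. If $\Phi(L^{(n-1)})$ were in $\T_{n+2}$, I would derive a contradiction via a covering reduction lemma: a suitable prime-power cyclic branched covering link of an $(n+2)$-bipolar link is $(n+1)$-bipolar. Covering link calculus then identifies the resulting covering link of $\Phi(L^{(n-1)})$ with $L^{(n-1)}$ itself, up to concordance, contradicting the inductive hypothesis $L^{(n-1)}\notin \T_{n+1}$. The technical challenge lies in proving this covering reduction lemma: since bipolarity demands definite bounding 4-manifolds with derived-series conditions on $\pi_1$, one must simultaneously control (a) the definiteness of the intersection form of the branched cover via an equivariant signature argument, (b) the fundamental group of the cover and its derived series under the cover, and (c) the required positon/negaton structure for the lifted link. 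Ensuring all three conditions pass through the cover while losing only one unit of bipolar height is the principal technical achievement of the proof.
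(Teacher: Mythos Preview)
Your overall architecture—an explicit height-raising operation paired with a covering-link lowering argument—matches the paper's, and your ``covering reduction lemma'' is essentially the Covering Positon/Negaton Theorem~\ref{theorem:covering-positon-theorem}. But the specific implementation you describe has two genuine gaps.

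First, infection along a curve $\eta$ by a topologically slice knot $J$ does not raise bipolar height. If $V$ is an $n$-positon for $L$ with surfaces $S_j$, then after gluing in a slice-disk exterior for $J$ along a neighborhood of $\eta$, the $S_j$ still have $\pi_1(S_j)$ in the $n$th derived series of the new slice-disk complement, not the $(n+1)$st: nothing about the infection moves the old meridians deeper. What makes the paper's operation $C(-)$ work is that it is a Bing-double-type construction in which the meridians of the \emph{old} components become commutators of meridians of the \emph{new} components, so the image of $\pi_1(V-\bigsqcup\nu(\Delta_i))$ lands in the commutator subgroup of the new exterior and every $S_j$ is pushed one level deeper (Lemma~\ref{lemma:positivity-or-bing-double}). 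A generic infection has no such effect, so your ``extending the $n$-positon across a standard building block'' step does not produce an $(n+1)$-positon.

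Second, your claim that the covering link of $\Phi(L^{(n-1)})$ is $L^{(n-1)}$ ``up to concordance'' is too optimistic, and the inductive hypothesis you carry is too weak. In the paper's computation (Theorem~\ref{theorem:height-one-covering-for-main-example}), the height-one covering link of $\widehat{C(\beta)}$ is not $\widehat\beta$ but $\big(\beta\cdot\ell(\widetilde\beta{}^r,\widetilde\beta)\big)\sphat$, carrying extra local-knot factors. These factors are cancelled only under an \emph{asymmetric} hypothesis: one assumes $\widehat\beta$ is $\Z_{(p)}$-homology $(n+1)$-positive (the property $\BH^p_+(n)$ of Definition~\ref{definition:bipolar-height-plus}), so that the extra factors are $(n+1)$-positive, their inverses $(n+1)$-negative, and a band-sum/concordance argument then forces $\widehat\beta$ itself to be $(n+1)$-negative---the contradiction. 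The bare hypothesis $L^{(n-1)}\notin\T_{n+1}$ does not suffice; one must propagate the one-sided property $\BH^p_+$ through the induction, and the seed knot must be $1$-positive but not $1$-negative (as the Cochran--Horn knots are), not merely ``not $1$-bipolar''. Your proposal nowhere addresses this asymmetry.
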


We remark that, for $n\ge 1$, the links which we exhibit in the proof
of Theorem~\ref{theorem:main_theorem} have unknotted components.

In order to prove Theorem~\ref{theorem:main_theorem}, we introduce the
notion of a \emph{$\Z_{(p)}$-homology $n$-bipolar link} (Definition
\ref{definition:homology-n-positon}) and employ the method of
\emph{covering link calculus} following the formulation
in~\cite{Cha-Kim:2008-1}.  An $n$-bipolar link is $\Z_{(p)}$-homology $n$-bipolar for all primes $p$.  The key ingredient (Theorem
\ref{theorem:covering-positon-theorem}), which we call the
\emph{Covering Positon/Negaton Theorem}, is that a covering link of a
$\Z_{(p)}$-homology $n$-bipolar link of height $k$ is
$\Z_{(p)}$-homology $(n-k)$-bipolar.

An interesting aspect of the proof of
Theorem~\ref{theorem:main_theorem} is that our examples are described
explicitly using a geometric operation, which pushes a link into a
higher level of the bipolar filtration.  For this purpose it is useful
to consider the notion of the \emph{bipolar height} of a link, which
is defined by
\[
\BH(L)=\max\{n \mid \text{$L$ is $n$-bipolar}\}.
\]
Using the Covering Positon/Negaton Theorem and the calculus of
covering links, we show that for certain class of links our geometric
operation raises the bipolar height (and its $\Z_{(p)}$-homology
analogue) precisely by one.  See
Definitions~\ref{definition:bipolar-height},
\ref{definition:bipolar-height-plus} and \ref{definition:C(-)} and
Theorem~\ref{theorem:bipolar-height-raising} for more details. This
enables us to push the rich structure of $\T_0$ modulo $\T_1$, which
was revealed by Cochran and Horn for the knot case in
\cite{Cochran-Horn:2012-1} using $d$-invariants, to an arbitrarily
high level of the bipolar filtration of links.

For links, the concordance classes merely form a set, of which
$\{\T_n\}$ is a filtration by subsets, since the connected sum
operation is not well defined.  The standard approach to generalize
the algebraic structure of the knot concordance group is to consider
\emph{string links}; the concordance classes of string links with $m$
components form a group, and it turns out that the classes of
topologically slice string links with $n$-bipolar closures form a
normal subgroup, which we denote by~$\TSL_n(m)$.  We discuss more
details at the beginning of Section~\ref{Section:string-links}.  This
bipolar filtration of topologically slice string links has a rich
algebraic structure:

\begin{theorem}\label{theorem:string-links-infinite-rank}
  For any $n \ge 0$ and for any $m \ge 2$, the quotient
  $\TSL_n(m)/\TSL_{n+1}(m)$ contains a subgroup whose abelianization
  is of infinite rank.
\end{theorem}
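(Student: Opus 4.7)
The plan is to propagate the Cochran--Horn infinite-rank examples at level $0$ up the bipolar filtration using the height-raising construction $C(-)$ from Theorem~\ref{theorem:bipolar-height-raising}, and to detect non-triviality in the abelianization of a suitable generated subgroup by repeatedly taking covering links until we return to level $0$, where $d$-invariants of cyclic branched covers provide the infinite-rank obstructions.

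First I would fix a prime $p$ and assemble a sequence of topologically slice string links $\sigma_1,\sigma_2,\ldots\in\TSL_0(m)$ whose classes are $\Z$-linearly independent in $\TSL_0(m)/\TSL_1(m)$, detected by $\Q$-valued homomorphisms built from Heegaard Floer $d$-invariants of $p^k$-fold cyclic branched covers of a distinguished closure component (a Cochran--Horn type topologically slice knot). For $m\ge 2$ this is arranged by placing the Cochran--Horn knot on one strand and using trivial strands for the remaining components, so that additivity of $d$-invariants under stacking (which corresponds to connected sum on the distinguished component) yields genuine homomorphisms on the string link concordance group.

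Next, for each $i$ set $L_i:=C^n(\sigma_i)$, the $n$-fold iterate of the height-raising operation performed on the distinguished component, interpreted at the string link level. Theorem~\ref{theorem:bipolar-height-raising} gives $\BH=n$ for each closure, so $[L_i]\in\TSL_n(m)$. Let $G$ be the subgroup of $\TSL_n(m)/\TSL_{n+1}(m)$ generated by the classes $[L_i]$. To show that $G^{\mathrm{ab}}$ has infinite rank I would show that an integer word $w=\prod_i L_i^{a_i}$ lying in $\TSL_{n+1}(m)$ forces all $a_i=0$. So assume its closure $\hat w$ is topologically slice and $(n+1)$-bipolar. Apply the Covering Positon/Negaton Theorem~\ref{theorem:covering-positon-theorem} $n$ times, choosing at each stage the covering link that undoes one application of $C$ (this is exactly the covering datum built into the construction underlying Theorem~\ref{theorem:bipolar-height-raising}). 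After $n$ steps one obtains a $\Z_{(p)}$-homology $1$-bipolar link which, by tracking the construction step by step, is $\Z_{(p)}$-homology concordant to the closure of $\prod_i \sigma_i^{a_i}$. Additivity of the chosen $d$-invariants and the Cochran--Horn obstruction at level $0$ then force every $a_i=0$, as required.

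The main obstacle will be verifying that the covering link operations used to unwind $C^n$ interact well with the stacking operation of string links, so that a height-$n$ cover of the closure of $\prod_i L_i^{a_i}$ really does represent the $\Z_{(p)}$-homology concordance class of $\sum_i a_i[\hat\sigma_i]$ in a group in which the $d$-invariants are additive homomorphisms. This requires a careful inductive tracking of how the covering construction distributes over the connected sums arising from stacking, and a judicious choice of branching data at each level so that the iterated covers collapse to the base-level examples as expected; this is where the explicit geometry of $C$ matters most. Once this compatibility is established, the infinite-rank conclusion follows from the $\Q$-linear independence of the base-level $d$-invariant values on the family $\{\sigma_i\}$.
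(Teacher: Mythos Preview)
Your overall strategy matches the paper's: start from Cochran--Horn knots, apply $C_n(-)$ to get into $\TSL_n$, and use covering (string) link calculus together with Theorem~\ref{theorem:covering-positon-theorem} to reduce a putative $(n+1)$-bipolar word back to level~$1$ where $d$-invariants obstruct. The paper also formalizes exactly the compatibility you flag as the main obstacle: it defines covering \emph{string link} operations (CL1), (CL2) and proves they commute with the string link product (Lemma~\ref{lemma:covering-string-link-and-product}), so that a height-$n$ cover of a product is the product of the corresponding covers.

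There is, however, a genuine gap in your sketch. You assert that after $n$ covering steps one obtains something $\Z_{(p)}$-homology concordant to the closure of $\prod_i \sigma_i^{a_i}$. This is not what the covering produces. The explicit computation (Theorem~\ref{theorem:height-one-string-link-covering-for-main-example} and Corollary~\ref{corollary:iterated-string-link-covering-for-main-example}) shows that a single covering step does \emph{not} invert $C$ cleanly: from $C(\beta)$ one gets $\beta$ only up to an extra half-twist $T_{2s+1}$, an orientation reversal $r_{s+t+1}$, an adjoined split factor $\ell_2(\widetilde\beta)$ coming from the plat closure, and a change of ambient space to a connected sum with a branched cover $N_{p^a}(\widetilde\beta)$. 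After iterating, the height-$n$ cover of $C_n(\gamma)$ is the knot $\gamma\cdot r(\gamma)\cdot\widetilde{C_1(\gamma)}\cdots\widetilde{C_{n-1}(\gamma)}$ sitting in a $3$-ball inside a nontrivial connected sum of $N_{p^a}(\widetilde{C_k(\gamma)})$'s, not $\gamma$ in~$S^3$.

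The paper's proof therefore has two substantial additional steps you have not accounted for. First, the unwanted $\widetilde{C_k(K_{i_j})}$ factors are removed by observing that each is (at least) $1$-bipolar, so their inverses can be connect-summed on without destroying $\Z_{(p)}$-homology $1$-negativity (Theorem~\ref{theorem:basic-construction-and-positivity}), reducing to $\#_j \eps_j(K_{i_j}\# K_{i_j}^r)$. Second, the ambient-space summands $N_{p^a}(\eps_j\widetilde{C_k(K_{i_j})})$ must be excised; for $k\ge 2$ and for $k=1$ with $\eps_j=+1$ this is done by capping with $\Z_{(p)}$-homology $1$-positons supplied by Theorem~\ref{theorem:covering-positon-theorem}, but the $k=1$, $\eps_j=-1$ summands persist as double branched covers of positive Whitehead doubles, and one needs the Manolescu--Owens inequality $d(N_2(\text{positive Whitehead double}))\le 0$ to control their contribution. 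Finally, the $d$-invariant obstruction is not applied as a $\Q$-valued homomorphism on string links as you suggest, but via the $1$-negativity criterion of Theorem~\ref{theorem:d-invariant-obstruction} together with the Cochran--Horn metabolizer statement (Proposition~\ref{proposition:cochran-horn-knot-d-invariant}); this forces $p=2$ at the end. Your outline would go through once these points are addressed, but as written the claim that the cover ``collapses to the base-level examples'' is the step that fails.
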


Once again, for $n\ge 1$ these subgroups are generated by
string links with unknotted components.

In the proof of Theorem~\ref{theorem:string-links-infinite-rank}, we
introduce a string link version of covering link construction which
behaves nicely with respect to the group structure (see for instance
Definition~\ref{definition:covering-string-link} and
Lemma~\ref{lemma:covering-string-link-and-product}).  We employ a more
involved calculus of covering string links than that used in the
ordinary links case.  So far as the authors know, this is the first
use of covering link calculus to investigate the structure of the
string link concordance group.  We anticipate that our method for
string links will be useful for further applications.

The paper is organized as follows: in Section
\ref{section:definitions_and_basic_results} we give the definition of a
$\Z_{(p)}$-homology $n$-bipolar link, and prove some basic results about
this notion.  In Section~\ref{section:covering_link_calculus} we
discuss the covering link calculus, and prove the Covering
Positon/Negaton theorem.  We also prove some preliminary results on
Bing doubles, using covering link calculus results for Bing doubles
from the literature.  In Section~\ref{section:main-examples} we prove
the bipolar height raising result, exhibit our main examples, and thus
prove Theorem~\ref{theorem:main_theorem}.
Section~\ref{Section:string-links} considers string links and proves
Theorem \ref{theorem:string-links-infinite-rank}.

Appendices~\ref{section:signature-and-0-positivity}
and~\ref{section:rho-invariant-and-positivity} give results on the use
of Levine-Tristram signatures and Von Neumann $\rho$-invariants
respectively to obstruct a link being $\Z_{(p)}$-homology $n$-bipolar.

\subsubsection*{Conventions}
All manifolds are smooth and submanifolds are smoothly embedded, with
the exception of the locally flat disks which are tacitly referred to
when we claim that a link is topologically slice.  If not specified,
then homology groups are with $\Z$ coefficients by default.  The
letter $p$ always denotes a prime number.

\subsubsection*{Acknowledgements}
The authors would like to thank Tim Cochran and Shelly Harvey for
interesting conversations about their filtration.  The authors also
thank an anonymous referee for helpful comments.  The first author was
partially supported by NRF grants 2010--0011629, 2013067043, and
2013053914.

\section{Homology $n$-positive, negative, and bipolar links}
\label{section:definitions_and_basic_results}

An (oriented) $m$-component link $L = L_1 \, \sqcup \dots \sqcup \,
L_m$ is a smooth disjoint embedding of $m$ (oriented) copies of $S^1$
into $S^3$.  Two links $L$ and $L'$ are said to be \emph{smoothly
  \textup{(}resp.\ topologically\textup{)} concordant} if there are $m$ disjoint
smoothly (resp.\ locally flat) embedded annuli $C_i$ in $S^3 \times I$
with $\partial C_i = L_i \times \{0\} \sqcup -L'_i \times \{1\}$.
Here $-L'$ is the mirror image of $L'$ with the string orientation
reversed.  A link which is concordant to the unlink is said to be
\emph{slice}.

In~\cite{Cochran-Harvey-Horn:2012-1}, Cochran, Harvey, and Horn
introduced the notion of $n$-positivity, negativity, and bipolarity.

\begin{definition}[{\cite[Definition~2.2]{Cochran-Harvey-Horn:2012-1}}]
  \label{definition:homotopy-n-positon}
  An $m$-component link $L$ in $S^3$ is \emph{$n$-positive} if $S^3$
  bounds a connected 4-manifold $V$ satisfying the following:
  \begin{enumerate}
  \item $\pi_1(V) \cong 0$.
  \item There are disjoint smoothly embedded 2-disks
    $\Delta_1,\ldots,\Delta_m$ in $V$ with $\bigsqcup\partial\Delta_i
    = L$.
  \item The intersection pairing on $H_2(V)$ is
    positive definite and there are disjointly embedded connected
    surfaces $S_j$ in $V-\bigsqcup \Delta_i$ which generate
    $H_2(V)$ and satisfy $\pi_1(S_j)\subset
    \pi_1(V-\bigsqcup \Delta_i)^{(n)}$.
  \end{enumerate}
  We call $V$ as above an \emph{$n$-positon} for $L$
  with slicing disks~$\Delta_i$.  An~\emph{$n$-negative
    link} and an \emph{$n$-negaton} are defined by
  replacing ``positive definite'' with ``negative definite''.  A link
  $L$ is called \emph{$n$-bipolar} if $L$ is both $n$-positive and
  $n$-negative.
\end{definition}

For our purpose the following homology analogue of
Definition~\ref{definition:homotopy-n-positon} provides an optimum
setting.

Throughout this paper $p$ denotes a fixed prime.  Therefore, as in
Definition~\ref{definition:Z-localized-at-p-coeff-derived-series}, we
often omit the $p$ from the notation.  In the statements of theorems
also we omit that the theorem holds for all primes~$p$.  At the end of
the paper, in the proof of
Theorem~\ref{theorem:2-component-string-links-infinite-rank}, we need
to restrict to $p=2$ for $d$-invariant computational reasons, but
until then $p$ can be any fixed prime.

We denote the localization of $\Z$ at $p$ by $\Z_{(p)}=\{a/b \in
\Q\mid p\nmid b \}$, and the ring $\Z/p\Z$ of mod $p$ residue classes
by~$\Z_p$.  Note that a manifold is a $\Z_p$-homology sphere if and
only if it is a $\Z_{(p)}$-homology sphere.

\begin{definition}
  \label{definition:Z-localized-at-p-coeff-derived-series}
  We define the \emph{$\Z_{(p)}$-coefficient derived series} $\{\cP^n
  G\}$ of a group $G$ as follows: $\cP^0 G := G$,
  \[
  \cP^{n+1} G := \Ker \Big( \cP^n G \rightarrow
  \frac{\cP^n G}{[\cP^n G,\cP^n G]} \rightarrow
  \frac{\cP^n G}{[\cP^n G,\cP^n G]}\otimesover{\Z} \Z_{(p)} \cong
  H_1(\cP^n G;\Z_{(p)})\Big)
  \]
\end{definition}

From the definition it can be seen that $\cP^n G$ is a normal subgroup
of $G$ for any~$n$.

We remark that a more general case of the mixed-coefficient derived
series was defined in~\cite{Cha:2010-01}.  It should also be noted
that our $\Z_{(p)}$-coefficient derived series is not equal to the
$\Z_p$-coefficient analogue, namely the series obtained by replacing
$\Z_{(p)}$ with~$\Z_p$, which appeared in the literature as well;
$\cP^n G/\cP^{n+1} G$ is the maximal abelian quotient of $\cP^n G$
which has no torsion coprime to~$p$, while the corresponding quotient
of the $\Z_p$-coefficient analogue is the maximal abelian quotient
which is a $\Z_p$-vector space.

\begin{definition}
  [$\Z_{(p)}$-homology $n$-positivity, negativity, and bipolarity]
  \label{definition:homology-n-positon}
  Suppose $L$ is an $m$-component link in a $\Z_{(p)}$-homology
  3-sphere~$Y$.  We say that $(Y,L)$ is \emph{$\Z_{(p)}$-homology
    $n$-positive} if $Y$ bounds a connected 4-manifold $V$ satisfying
  the following:
  \begin{enumerate}
  \item $H_1(V;\Z_{(p)})=0$.
  \item There are disjointly embedded 2-disks
    $\Delta_1,\ldots,\Delta_m$ in $V$ with $\bigsqcup\partial\Delta_i
    = L$.
  \item The intersection pairing on $H_2(V)/\text{torsion}$ is
    positive definite and there are disjointly embedded connected
    surfaces $S_j$ in $V-\bigsqcup \Delta_i$ which generate
    $H_2(V)/\text{torsion}$ and satisfy $\pi_1(S_j)\subset \cP^n
    \pi_1(V-\bigsqcup \Delta_i)$.
  \end{enumerate}

  We call $V$ as above a \emph{$\Z_{(p)}$-homology $n$-positon} for
  $(Y,L)$ with slicing disks~$\Delta_i$.  A~\emph{$\Z_{(p)}$-homology
    $n$-negative link} and a \emph{$\Z_{(p)}$-homology $n$-negaton}
  are defined by replacing ``positive definite'' with ``negative
  definite.''  We say that $(Y,L)$ \emph{$\Z_{(p)}$-homology
    $n$-bipolar} if $(Y,L)$ is both $\Z_{(p)}$-homology $n$-positive
  and $\Z_{(p)}$-homology $n$-negative.

  For a commutative ring $R$ (e.g.\ $R=\Z$ or $\Q$), the
  \emph{$R$-homology} analogues are defined by replacing $\Z_{(p)}$
  with $R$ in the above definitions.
\end{definition}

We note that whether a link is $\Z_{(p)}$-homology $n$-positive
depends on the choice of the ambient space; it is conceivable that,
for example, even when $(Y,L)$ is not $\Z_{(p)}$-homology
$n$-positive, $(Y\#Y',L)$ could be.  Nonetheless, when the choice of
the ambient space $Y$ is clearly understood, we often say that $L$ is
$\Z_{(p)}$-homology $n$-positive, and similarly for the
negative/bipolar case.

When we need to distinguish $n$-positive (resp.\ negative, bipolar)
links in Definition~\ref{definition:homotopy-n-positon} explicitly
from the $\Z_{(p)}$-homology case in
Definition~\ref{definition:Z-localized-at-p-coeff-derived-series}, we
call the former \emph{homotopy} $n$-positive (resp.\ negative,
bipolar).  It is easy to see that homotopy $n$-positive (resp.\ negative,
bipolar) links are $R$-homology $n$-positive (resp.\ negative,
bipolar) for any~$R$.

\subsection{Zero-framing and 0-positivity/negativity}

In this paper, we need some basic facts on framings of components of links
in rational homology spheres (for example to define branched covers).
For the reader's convenience we discuss these in some detail, focusing on
the $0$-positive/negative case.

The following lemma gives the basic homological properties of a
$\Z_{(p)}$-homology 0-positon (or negaton).  Since an $n$-positon, for
$n > 0$, is in particular a $0$-positon, this will suffice.

\begin{lemma}
  \label{lemma:basic-properties-of-Z_p-positon}
  Suppose $V$ is a $\Z_{(p)}$-homology $0$-positon \textup{(}or
  negaton\textup{)} for an $m$-component link $L$ with slicing
  disks~$\Delta_i$.  Then the following hold:
  \begin{enumerate}
  \item\label{lemma-case:H1-of-slice-disk-complement-in-positon} The
    first homology $H_1(V-\bigsqcup\Delta_i;\Z_{(p)})$ is a free
    $\Z_{(p)}$-module of rank $m$ generated by the meridians of~$L$.
  \item\label{lemma-case:H2-of-positon} The inclusion induces an
    isomorphism of $H_2(V-\bigsqcup\Delta_i;\Z_{(p)})$ onto
    $H_2(V;\Z_{(p)})$, which is a free $\Z_{(p)}$-module generated by
    the surfaces $S_j$ in
    Definition~\ref{definition:homology-n-positon}.
  \end{enumerate}
\end{lemma}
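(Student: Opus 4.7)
The plan is a Mayer--Vietoris computation over $\Z_{(p)}$, combined with Poincar\'e--Lefschetz duality to rule out $p$-torsion in $H_2(V;\Z_{(p)})$.

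First I would replace $V \setminus \bigsqcup \Delta_i$ by the homotopy equivalent space $X := V \setminus \bigsqcup \inte(N_i)$, where each $N_i \cong \Delta_i \times D^2$ is a closed tubular neighborhood of the properly embedded disk $\Delta_i$. Then $N_i$ is contractible and $X \cap N_i = \Delta_i \times \partial D^2 \simeq S^1$, whose fundamental class represents the meridian $\mu_i$ of $L_i$. Since $V$ is connected, so is $X$, and the Mayer--Vietoris sequence for the decomposition $V = X \cup \bigsqcup_i N_i$ with $\Z_{(p)}$ coefficients---after a quick check that the $H_0$-connecting map is injective and using the hypothesis $H_1(V;\Z_{(p)})=0$---collapses to
\begin{equation*}
  0 \to H_2(X; \Z_{(p)}) \to H_2(V; \Z_{(p)}) \xrightarrow{\partial}
  \bigoplus_{i=1}^{m} \Z_{(p)}\langle \mu_i \rangle \to H_1(X; \Z_{(p)}) \to 0.
\end{equation*}

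Next I would show that $H_2(V;\Z_{(p)})$ is a free $\Z_{(p)}$-module generated by the classes $[S_j]$. Since $\partial V = Y$ is a $\Z_{(p)}$-homology sphere, the long exact sequence of the pair $(V,\partial V)$ yields $H_2(V;\Z_{(p)}) \cong H_2(V,\partial V;\Z_{(p)})$. Poincar\'e--Lefschetz duality identifies the latter with $H^2(V;\Z_{(p)})$, and the universal coefficient theorem combined with $H_1(V;\Z_{(p)})=0$ identifies that with $\Hom_{\Z_{(p)}}(H_2(V;\Z_{(p)}),\Z_{(p)})$, which is torsion-free. Since $\Z_{(p)}$ is a PID, $H_2(V;\Z_{(p)})$ is free. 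In particular no $p$-primary torsion survives localization, so the classes $[S_j]$, which generate $H_2(V)/\text{torsion}$ by hypothesis, freely generate $H_2(V;\Z_{(p)})$.

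Finally I would conclude both parts simultaneously. After a small isotopy each $S_j$ can be taken to lie in $X$, so $[S_j] \in H_2(V;\Z_{(p)})$ lifts to $H_2(X;\Z_{(p)})$; hence the inclusion-induced map $H_2(X;\Z_{(p)}) \to H_2(V;\Z_{(p)})$ is surjective, and combined with the injectivity already visible from the exact sequence it is an isomorphism, proving (2). Surjectivity also forces $\partial = 0$, yielding the isomorphism $\bigoplus_i \Z_{(p)}\langle \mu_i \rangle \xrightarrow{\cong} H_1(X;\Z_{(p)})$, which is (1). The only non-bookkeeping ingredient is the duality-plus-universal-coefficients step guaranteeing that $H_2(V;\Z_{(p)})$ has no $p$-torsion; without it one could only deduce that the $[S_j]$ generate modulo torsion, which would be too weak to force the connecting map to vanish.
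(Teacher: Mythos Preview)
Your proof is correct and follows essentially the same strategy as the paper's: a Mayer--Vietoris (equivalently, long exact sequence of the pair plus excision) computation, together with a Poincar\'e--Lefschetz duality argument to rule out $p$-torsion in $H_2(V)$. The only cosmetic difference is that the paper phrases the duality step as $H_3(V;\Z_p)\cong H^1(V,\partial V;\Z_p)\cong H^1(V;\Z_p)=0$, whence $\Tor(H_2(V),\Z_p)=0$, rather than your $H_2(V;\Z_{(p)})\cong H_2(V,\partial V;\Z_{(p)})\cong H^2(V;\Z_{(p)})\cong\Hom_{\Z_{(p)}}(H_2(V;\Z_{(p)}),\Z_{(p)})$.
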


The following elementary observations are useful: commutative
localizations are flat, so:
\[
H_*(-;\Z_{(p)})\cong H_*(-)\otimes \Z_{(p)},
\]
and for a finitely generated abelian group $A$, we have
\begin{align*}
  A\otimes \Z_{(p)} \cong (\Z_{(p)})^d &\Longleftrightarrow A\cong
  \Z^d \oplus \text{(torsion coprime to $p$)}
  \\
  & \Longleftrightarrow A\otimes \Q \cong \Q^d \text{ and } A\otimes
  \Z_{p^a} \cong (\Z_{p^a})^d \text{ for any $a$}
\end{align*}
In addition, if these equivalent conditions hold, then for elements
$x_1,\ldots,x_d\in A$, $\{x_i \otimes 1\}$ is a basis of
$A\otimes\Z_{(p)}$ if and only if $\{x_i \otimes 1\}$ is a basis of
$A\otimes\Z_{p^a}$ and $\{x_i \otimes 1\}$ is a basis of $A\otimes\Q$.
This gives us the following consequences of
Lemma~\ref{lemma:basic-properties-of-Z_p-positon}: the torsion part of
$H_1(V-\bigsqcup \Delta_i)$ has order coprime to $p$, and for
$R=\Z_{p^a}$ and $\Q$, $H_1(V-\bigsqcup \Delta_i;R)$ is isomorphic to
the free $R$-module $R^m$ generated by the meridians.  Similar
conclusions hold for $H_2(V-\bigsqcup \Delta_i; -)$ and $H_2(V; -)$.

\begin{proof}[Proof of Lemma~\ref{lemma:basic-properties-of-Z_p-positon}]

  First we claim that $H_2(V)$ has no $p$-torsion.  To see this,
  note that since $\partial V$ is a $\Z_p$-homology sphere and
  $H_1(V;\Z_{(p)})=0$ implies $H_1(V;\Z_p)=0$, we have
  $H_3(V;\Z_p)\cong H^1(V,\partial V;\Z_p)\cong H^1(V;\Z_p)=0$.
  By the universal coefficient theorem, $H_3(V;\Z_p)$ surjects
  onto $\Tor(H_2(V),\Z_p)$.  The vanishing of this latter group
  implies the claim.

  From the above claim it follows that the quotient map $H_2(V) \to
  H_2(V)/\text{torsion}$ induces an isomorphism $H_2(V;\Z_{(p)}) =
  H_2(V)\otimes \Z_{(p)} \cong (H_2(V)/\text{torsion})\otimes
  \Z_{(p)}$.  Let $S_j$ be the surfaces given in
  Definition~\ref{definition:homology-n-positon}.  Since $S_j\subset
  V-\bigsqcup \Delta_i$ and the classes of $S_j$ span
  $H_2(V)/\text{torsion}$, it follows that $H_2(V-\bigsqcup
  \Delta_i;\Z_{(p)}) \to H_2(V;\Z_{(p)})$ is surjective.  From the
  long exact sequence for $(V,V-\bigsqcup\Delta_i)$ and excision it
  follows that
  \[\textstyle
  H_1(V-\bigsqcup\Delta_i;\Z_{(p)}) \cong
  H_2(V,V-\bigsqcup\Delta_i;\Z_{(p)})\cong \bigoplus_i
  H_2(\Delta_i\times(D^2,S^1);\Z_{(p)})\cong (\Z_{(p)})^m,
  \]
  generated by the meridians of the $\Delta_i$.  This
  shows~(\ref{lemma-case:H1-of-slice-disk-complement-in-positon}).

  Now, for (\ref{lemma-case:H2-of-positon}), consider the
  $\Z_{(p)}$-coefficient Mayer-Vietoris sequence for $V \simeq (V-\bigsqcup\Delta_i) \cup
  (\text{$2$-cells})$, where the 2-cells are attached along each
  meridian. (Alternatively, consider the long exact sequence for
  $(V,V-\bigsqcup\Delta_i)$.)  By
  (\ref{lemma-case:H1-of-slice-disk-complement-in-positon}), the
  desired conclusion easily follows.
\end{proof}

A framing of a submanifold is a choice of trivialization of its normal
bundle.  Generalizing the case of knots in $S^3$, a framing $f$ of a
knot $K$ in a rational homology sphere $Y$ is called the
\emph{zero-framing} if the rational-valued linking number of $K$ with
its longitude ($=$ a parallel copy) taken along $f$ is zero.  We state
some facts on the zero-framing.  For the proofs, see
\cite[Chapter~2]{Cha:2003-1}, \cite[Section~3]{Cha-Ko:2000-1}.
\begin{enumerate}
\item A knot $K$ in a rational homology sphere has a zero framing if
  and only if the $\Q/\Z$-valued self-linking of $K$ vanishes. This
  follows from \cite[Theorem~2.6~(2)]{Cha:2003-1}, noting that a knot
  admits a generalized Seifert surface with respect to some framing if
  and only if that framing is the zero framing, by
  \cite[Lemma~3.1]{Cha-Ko:2000-1}.
\item A framing $f$ on $K$ in $Y$ is a zero-framing if and only if
  there is a map $g\colon Y-K \to S^1$ under which the longitude (= a
  parallel copy) taken along $f$ is sent to a null-homotopic curve and
  the image of a meridian $\mu$ of $K$ is sent to an essential curve.
  Furthermore, $g_*([\mu])\in \Z=\pi_1(S^1)$ is coprime to $p$ if $Y$
  is a $\Z_{(p)}$-homology 3-sphere.  This follows from elementary
  obstruction theory and the same observation as above, namely that
  zero-framing is equivalent to the existence of a generalized Seifert
  surface.
\end{enumerate}
From the above it follows that if a knot $K$ in a $\Z_{(p)}$-homology
3-sphere $Y$ admits a zero-framing and $d$ is a power of $p$, then the
$d$-fold cyclic branched cover of $Y$ along $K$ is defined.  For, the
above map $g$ induces $\phi\colon \pi_1(Y-K)\to \Z \to \Z_d$ which
gives a $d$-fold regular cover of $Y-K$.  Under $\phi$ the meridian of
$K$ is sent to a generator of $\Z_d$, since $d$ is a power of $p$.  It
follows that one can glue the $d$-fold cover of $Y-K$ with the
standard branched covering of $K\times (D^2,0)$ along the zero-framing
to obtain the desired branched cover of $Y$ along~$K$.

\begin{lemma}
  \label{lemma:positivity-zero-framing}
  A $\Z_{(p)}$-homology $0$-positive \textup{(}or negative\textup{)}
  knot $K$ in a $\Z_{(p)}$-homology sphere admits a zero-framing.
\end{lemma}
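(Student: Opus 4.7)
The plan is to verify the second characterization of zero-framing quoted in the excerpt, namely to exhibit a map $g\colon Y-K\to S^1$ under which the meridian of $K$ is sent to an essential curve (and in fact one of degree coprime to $p$) and some longitude is sent to a nullhomotopic one. The $0$-positive and $0$-negative cases can be treated simultaneously, since definiteness of the intersection form plays no role in the argument; only the homological information furnished by Lemma~\ref{lemma:basic-properties-of-Z_p-positon} will be used.

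First I fix a $\Z_{(p)}$-homology $0$-positon $V$ for $(Y,K)$ with slicing disk $\Delta$, and let $\nu\Delta\cong \Delta\times D^2$ be a tubular neighborhood of $\Delta$ in $V$. The candidate zero-framing on $K$ is the framing $f_\Delta$ obtained by restricting this trivialization to $\partial\Delta=K$. A longitude for $f_\Delta$ is $\ell_\Delta := \partial\Delta\times\{\mathrm{pt}\}$ for any $\mathrm{pt}\in\partial D^2$; by construction $\ell_\Delta$ bounds the parallel disk $\Delta\times\{\mathrm{pt}\}\subset V-\Delta$, hence in particular is nullhomologous in $V-\Delta$.

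Next I build the map $g$. By Lemma~\ref{lemma:basic-properties-of-Z_p-positon}(1), $H_1(V-\Delta;\Z_{(p)})$ is free of rank one over $\Z_{(p)}$, generated by a meridian $\mu$ of $K$. The elementary observations recorded immediately after that lemma in the excerpt then imply $H_1(V-\Delta)\cong \Z\oplus T$ with $T$ torsion of order coprime to $p$, and the image of $\mu$ in the $\Z$-summand is some integer $c$ coprime to $p$ (because $\mu\otimes 1$ must be a $\Z_{(p)}$-module generator of the free part). The composition $\pi_1(V-\Delta)\to H_1(V-\Delta)\to \Z$ is classified by a map $\Phi\colon V-\Delta\to S^1$, and I set $g := \Phi|_{Y-K}$. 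Under $g$ the meridian $\mu$ has degree $c\neq 0$, so $g|_\mu$ is essential of degree coprime to $p$, while $\ell_\Delta$ has degree $0$ by the previous paragraph, so $g|_{\ell_\Delta}$ is nullhomotopic. The characterization of zero-framings quoted in the excerpt then identifies $f_\Delta$ as a zero-framing of $K$.

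The only place the argument could conceivably fail is if $\mu$ were nullhomologous, or only divisibly nonzero by $p$, in $H_1(V-\Delta)/\mathrm{torsion}$; this is precisely ruled out by Lemma~\ref{lemma:basic-properties-of-Z_p-positon}(1), which is therefore the key input. Everything else, namely the production of $f_\Delta$ from a product neighborhood of $\Delta$ and of $g$ from the cohomology class in $H^1(V-\Delta;\Z)$ via obstruction theory, is routine.
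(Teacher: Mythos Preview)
Your proof is correct and follows essentially the same approach as the paper: both use Lemma~\ref{lemma:basic-properties-of-Z_p-positon}(1) to see that $H_1(V-\Delta)/\text{torsion}\cong\Z$ with the meridian mapping to a nonzero (indeed coprime-to-$p$) multiple of the generator, build a map $V-\Delta\to S^1$ by obstruction theory, restrict to $Y-K$, and observe that the longitude coming from the normal bundle framing of $\Delta$ is null-homotopic. Your write-up is slightly more explicit about the coprime-to-$p$ degree of the meridian, but the argument is the same.
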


\begin{proof}
  Suppose $V$ is a $\Z_{(p)}$-homology $0$-positon for $L$ with a
  slicing disk~$\Delta$.  By appealing to
  Lemma~\ref{lemma:basic-properties-of-Z_p-positon}~(\ref{lemma-case:H1-of-slice-disk-complement-in-positon}),
  $H_1(V-\Delta)/\text{torsion}$ is isomorphic to an infinite cyclic
  group where the meridian of $K$ is a nonzero power of the generator.
  This gives rise to a map $V-\Delta\to S^1$ by elementary obstruction
  theory.  Let $g \colon Y-K\to S^1$ be its restriction.  The
  longitude of $K$ taken along the unique framing of the normal bundle
  of $\Delta$ is null-homotopic in $V-\Delta$, and consequently its
  image under $g$ is null-homotopic in~$S^1$.
\end{proof}

\subsection{Basic operations on links}

We state some observations on how positivity (resp.\ negativity,
bipolarity) are affected by certain basic operations for links as the
following theorem.  Throughout this paper, $I=[0,1]$.  For two links
$L\subset Y$ and $L'\subset Y'$, their \emph{split union} is defined
to be the link $L\sqcup L' \subset Y\# Y'$, where the connected sum is
formed by choosing 3-balls disjoint to the links.  For a link
$L\subset Y$ and an embedding $\gamma\colon I\times I \to Y$ such that
$\gamma(0\times I)$ and $\gamma(1\times I)$ are contained in distinct
components of $L$ and $\gamma((0,1)\times I)$ is disjoint to $L$, a
new link $L'$ is obtained by smoothing the corners of
$(L-\gamma(\{0,1\}\times I)) \cup \gamma(I\times\{0,1\})$.  We say
$L'$ is obtained from $L$ by \emph{band sum} of components.  When $L$
is oriented, we assume that the orientation of $\gamma(\{0,1\}\times
I)$ is opposite to the orientations of $L$ so that $L'$ is oriented
naturally.  For a submanifold $J$, we denote the normal bundle
by~$\nu(J)$.

\begin{theorem}
  \label{theorem:basic-construction-and-positivity}
  The following statements and their $\Z_{(p)}$-homology analogues
  hold.  In addition, the statements also hold if the words
  ``positive'' and ``negative'' are interchanged.  Similarly the
  statements hold if both these words are replaced by ``bipolar'',
  except for~(\ref{theorem-case:positivity-crossing-change}).
  \begin{enumerate}
  \item\label{theorem-case:positivity-mirror-image}
    \textup{(Mirror image)} A link is $n$-positive if and only if
    its mirror image is $n$-negative.
  \item\label{theorem-case:positivity-changing-string-orientation}
    \textup{(String orientation change)} A link if $n$-positive if
    and only if the link obtained by reversing orientation of any of
    the components is $n$-positive.
  \item\label{theorem-case:positivity-split-union} \textup{(Split
      union)} The split union of two $n$-positive link is $n$-positive.
  \item\label{theorem-case:positivity-sublink} \textup{(Sublink)}
    A sublink of an $n$-positive link is
    $n$-positive.
  \item\label{theorem-case:positivity-band-sum} \textup{(Band
      sum)} A link obtained by band sum of components from an
    $n$-positive link is $n$-positive.
  \item\label{theorem-case:positivity-generalize-satellite}
    \textup{(Generalized cabling and doubling)} Suppose $L_0$ is a
    link in the standard $S^1\times D^2 \subset S^3$ which is slice in
    the 4-ball, and $L$ is an $n$-positive
    link with a component~$J$.  Then the link obtained from $L$ by
    replacing $(\nu(J), J)$ with $(S^1\times D^2, L_0)$ along the zero
    framing is $n$-positive.
  \item\label{theorem-case:blow-down} \textup{(Blow-down)} If $L$
    is an $n$-positive $m$-component link, then the $(m-1)$-component
    link in the $(\pm1)$-framed \textup{(}with respect to the
    zero-framing\textup{)} surgery along a component of $L$ is
    $n$-positive.
  \item\label{theorem-case:positivity-slice-link}
    \textup{(Concordance)} A link concordant to an $n$-positive
    link is $n$-positive.  A slice link is $n$-bipolar for any~$n$.
  \item\label{theorem-case:positivity-crossing-change}
    \textup{(Crossing change)} If $L$ is transformed to a
    $0$-positive link by changing some positive crossings involving
    the same components to negative crossings, then $L$ is
    $0$-positive.
  \end{enumerate}
\end{theorem}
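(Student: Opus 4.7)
My plan is to verify each of the nine items, and its $\Z_{(p)}$-homology analogue, by constructing a new positon $V^*$ from the given one $V$ via a direct $4$-manifold modification. The homological checks in the $\Z_{(p)}$-case are handled by Lemma~\ref{lemma:basic-properties-of-Z_p-positon}, and the derived-series condition on the new surfaces is verified by naturality of $\cP^n$ (so that $\pi_1(S_j)\subset\cP^n\pi_1(V-\bigsqcup\Delta_i)$ persists under the inclusion into the complement inside the modified $4$-manifold).

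The items (1)--(5) and (8) are essentially functorial. For (1), reverse the orientation of $V$, which swaps positive- and negative-definiteness while preserving everything else. For (2), the positon definition makes no reference to the string orientation of $L$. For (3), take the boundary connected sum $V \natural V'$; the intersection forms add as a block sum. For (4), keep $V$ but restrict to the sublink's slicing disks. For (5), a band in $\partial V$ together with two slicing disks produces a new disk for the band-summed component, and the surfaces $S_j$ can be arranged disjoint from the band. For (8), cap off a concordance (or a $\Z_{(p)}$-homology cobordism containing disjoint annuli) with the original disks, verifying homological invariance via Mayer--Vietoris; the unlink is $n$-bipolar via $V=B^4$ with standard disks.

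For (6), I would cut out a tubular neighbourhood $\nu(\Delta_J)\cong D^2\times D^2$ of the slicing disk for $J$ and glue in a $4$-ball containing slicing disks for $L_0$, identifying the $S^1\times D^2$ factor of the boundary with $\nu(J)$ via the zero-framing. The zero-framing exists by Lemma~\ref{lemma:positivity-zero-framing} and coincides with the framing induced on $J$ by $\Delta_J$, since $\Delta_J$ produces a generalised Seifert surface in $V$ with trivial longitude as in the proof of that lemma. Replacing a contractible $4$-ball by another contractible $4$-ball preserves all homological data; the new slicing disks are the old $\Delta_i$ for $i\ne J$ together with the slicing disks for $L_0$ in the inserted $B^4$.

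The main obstacles are (7) and (9). For (7) with $+1$-surgery, attach a $(+1)$-framed $2$-handle along $\nu(J_0)\subset\partial V$; the disk $\Delta_{J_0}$ together with the $2$-handle core becomes an embedded $2$-sphere of self-intersection $+1$, and the new intersection form $Q\oplus(+1)$ is still positive definite, with the new generator having trivial $\pi_1$. For $-1$-surgery, the analogous $2$-handle produces a $(-1)$-sphere $\Sigma$; the key move is then to \emph{blow down} $\Sigma$, replacing a tubular neighbourhood $N(\Sigma)\cong\overline{\mathbb{CP}^2}\setminus\mathrm{int}\,B^4$ by $B^4$. This leaves the boundary unchanged but kills the $(-1)$-class, restoring positive definiteness; bookkeeping shows the remaining slicing disks and surfaces $S_j$ can be taken disjoint from $N(\Sigma)$ since they were already disjoint from $\Delta_{J_0}$. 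Item (9) I realise by attaching a $(+1)$-framed $2$-handle to $V'$ along a small unknot $U\subset\partial V'$ encircling the two arcs of each crossing being reverted; $(+1)$-surgery on $U$ realises the inverse (negative-to-positive) crossing change $L'\rightsquigarrow L$, the $2$-handle core together with a disk for $U$ pushed into $V'$ yields a $(+1)$-sphere preserving positive definiteness, and the hypothesis that the changed crossings involve the same components lets us consistently extend the slicing disks $\Delta'_i$ near each $U$. The derived-series condition is vacuous at level $n=0$, which explains why (9) is stated only in that case.
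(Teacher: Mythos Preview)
Your proof is correct and for items (1)--(6), (8), (9) it is essentially identical to the paper's argument (the paper also simply cites Cochran--Lickorish for~(9)). Your phrasing in (9) is slightly imprecise: the ``same components'' hypothesis is not about extending the slicing disks (these are already disjoint from $U$), but about ensuring that $[U]=0$ in $H_1(V'-\bigsqcup\Delta'_i)$, so that $U$ bounds a disk in $V'$ disjoint from the $\Delta'_i$ and hence the new $(+1)$-sphere lies in the slice-disk complement.

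The genuine divergence is in (7). The paper argues uniformly for both framings by \emph{removing} $\nu(\Delta_1)\cong D^2\times D^2$ from $V$ and regluing a copy of $D^2\times D^2$ along the $\Delta_1\times S^1$ part of the boundary with an attaching map chosen so that the new boundary is the $(\pm1)$-surgery. A Mayer--Vietoris computation using Lemma~\ref{lemma:basic-properties-of-Z_p-positon}(\ref{lemma-case:H2-of-positon}) gives $H_2(W)\cong H_2(V)$, so the same surfaces $S_j$ serve as generators and no new classes appear. Your approach instead \emph{enlarges} $V$ by a $2$-handle, obtaining $H_2=H_2(V)\oplus\Z$; this forces a case split ($+1$ handled by positive definiteness of $Q\oplus(+1)$, $-1$ handled by blowing down the $(-1)$-sphere and then tracking that $H_2$ returns to $H_2(V)$). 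Both routes are valid, and your $(+1)$ case is arguably more elementary, but the paper's version is cleaner: a single construction, no sign distinction, and no need to verify that the $S_j$ and remaining $\Delta_i$ stay disjoint from a tubular neighbourhood of the blown-down sphere.
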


We remark that due to
Theorem~\ref{theorem:basic-construction-and-positivity}~(\ref{theorem-case:positivity-changing-string-orientation})
one may view knot and links as unoriented for the study of
$n$-positivity (resp.\ negativity, bipolarity).

From Theorem~\ref{theorem:basic-construction-and-positivity}
(\ref{theorem-case:positivity-split-union}) and
(\ref{theorem-case:positivity-band-sum}) above, it follows that
\emph{any} connected sum of $n$-positive (resp.\ negative, bipolar)
links is $n$-positive (resp.\ negative, bipolar).  Moreover, by
Theorem
\ref{theorem:basic-construction-and-positivity}~(\ref{theorem-case:positivity-changing-string-orientation}),
by changing the orientation on one component before band summing, we
can consider band sums which do not respect the orientation of a
component.  We have $\Z_{(p)}$-homology analogues as well.

\begin{proof}
  Suppose $V$ is an $n$-positon for $L$ with slicing disks~$\Delta_i$.

  (\ref{theorem-case:positivity-mirror-image}) The $4$-manifold $V$
  with reversed orientation is an $n$-negaton for the mirror image
  of~$L$.  Note that this makes it sufficient to consider only the
  positive case of the remaining statements: the negative case then
  follows from taking mirror image and the bipolar case from having
  both the positive and negative cases.

  (\ref{theorem-case:positivity-changing-string-orientation}) As
  remarked above, the $n$-positon $V$ and the slicing disks $\Delta_i$
  satisfy Definition \ref{definition:homology-n-positon} and its
  homotopy analogue independently of the orientations of components of
  $L$.

  (\ref{theorem-case:positivity-split-union})~The boundary connected
  sum of $n$-positons (along balls disjoint to links) is an
  $n$-positon of the split link.

  (\ref{theorem-case:positivity-sublink}) The $4$-manifold $V$ with
  appropriate slicing disks forgotten is an $n$-positon for a
  sublink.

  (\ref{theorem-case:positivity-band-sum})~If $L'$ is obtained from
  $L$ by band sum, then $L'$ and $-L$ cobound a disjoint union of
  annuli and a twice-punctured disk in $\partial V\times I$.
  Attaching it to $V$, we obtain an $n$-positon for~$L'$.

  (\ref{theorem-case:positivity-generalize-satellite}) Note that
  $L_0\subset S^1\times D^2 \subset \partial(D^2\times D^2)$ bounds
  slicing disks $D_i$ in $D^2\times D^2$ by the assumption.  If
  $\Delta$ is a slicing disk for a component $J$ of $L$, then
  identifying $\nu(\Delta)$ with $\Delta\times D^2\cong D^2\times D^2$
  and replacing $(\nu(\Delta),\Delta)$ with $(D^2\times D^2, \bigsqcup
  D_i)$ in $V$, we obtain slicing disks for the newly introduced
  components, and with these and the other slicing disks for $L$, $V$ is
  an $n$-positon.

  (\ref{theorem-case:blow-down}) Without loss of generality we can do
  surgery on the first component, say $K_1$, of~$L_1$.  Define a
  4-manifold $W$ to be $V-\nu(\Delta_1)$ with a 2-handle $D^2 \times
  D^2$ attached along the $\Delta_1 \times S^1$ part of the boundary
  of $V-\nu(\Delta_1)$, with an attaching map $S^1 \times D^2 \to
  \Delta_1 \times S^1$ chosen so that the resulting boundary
  3-manifold is that given by performing surgery on $L_1$ using the
  $(\pm 1)$-framing.  The 4-manifold $W$ is an $n$-positon for the new
  link obtained by $(\pm 1)$-surgery along~$L_1$.  For, it can be seen
  that $\pi_1(W)\cong 0$ by the Seifert-Van Kampen theorem, since
  $\pi_1(V-\Delta_1)$ is normally generated by a meridian of~$K_1$ and
  the zero-linking longitude of $K_1$ is null-homotopic in
  $V-\nu(\Delta_1)$.  We also see that $H_2(W) \cong H_2(V)$ by a
  Mayer-Vietoris argument, using
  Lemma~\ref{lemma:basic-properties-of-Z_p-positon}~(\ref{lemma-case:H2-of-positon}).
  (This is true when $V$ is a homotopy $n$-positon and a
  $\Z_{(p)}$-homology $n$-positon.)

  (\ref{theorem-case:positivity-slice-link}) If $L'$ is concordant to
  $L$ via disjoint embedded annuli $C_i$ in $S^3\times I$, then
  $V\cup_{S^3} S^3\times I$ is an $n$-positon for $L'$ with slicing
  disks $\Delta_i \cup C_i$.  For a slice link $L$, the 4-ball with
  slicing disks is an $n$-positon (and negaton) for~$L$ for all $n$.

  (\ref{theorem-case:positivity-crossing-change}) This is shown by
  arguments of \cite[Lemma~3.4]{Cochran-Lickorish:1986-1}.

  The $\Z_{(p)}$-homology analogues are shown by the same arguments.
  We note that when $V$ is a $\Z_{(p)}$-homology $n$-positon in (7),
  we have the following instead of $\pi_1(W)\cong 0$: $H_1(W;\Z_{(p)})
  \cong H_1(V;\Z_{(p)}) \cong 0$ by a Mayer-Vietoris argument, since
  $H_1(V-\Delta_1;\Z_{(p)})$ is isomorphic to $\Z_{(p)}$ generated by
  a meridian by
  Lemma~\ref{lemma:basic-properties-of-Z_p-positon}~(\ref{lemma-case:H1-of-slice-disk-complement-in-positon}).
\end{proof}

We remark that the operations in Theorem
\ref{theorem:basic-construction-and-positivity}~(\ref{theorem-case:positivity-changing-string-orientation}),
(\ref{theorem-case:positivity-split-union}),
(\ref{theorem-case:positivity-sublink}),
(\ref{theorem-case:positivity-band-sum})
and~(\ref{theorem-case:blow-down}) may be used (in various combinations) to
obtain obstructions to links being positive (or negative, bipolar)
from previously known results on knots.  We discuss these results on
knots below.  In the later sections of this paper we will develop more
sophisticated methods of reducing a problem from links to knots.

\subsection{Obstructions to homology 0- and 1-positivity}

Most of the obstructions to knots being $0$- and $1$-positive (resp.\
negative, bipolar) introduced in \cite[Sections 4,
5,~6]{Cochran-Harvey-Horn:2012-1} generalize to their $\Z_{(p)}$-homology
analogues.  For those who are not familiar with these results, we
spell out the statements.

\begin{theorem}[Obstructions to being $\Z_{(p)}$-homology $0$-positive;
  c.f. {\cite[Section~4]{Cochran-Harvey-Horn:2012-1}}]
  \label{theorem:0-bipolar-obstructions}
  If a knot $K$ in a $\Z_{(p)}$-homology 3-sphere is
  $\Z_{(p)}$-homology $0$-positive, then the following hold:
  \begin{enumerate}
  \item The average function $\overline\sigma_K(\theta)$ of Cha-Ko's
    signature \cite{Cha-Ko:2000-1} is nonpositive.
  \item Ozsv\'ath-Szab\'o-Rasmussen $\tau$-invariant
    \cite{Ozsvath-Szabo:2003-1,Rasmussen:2003-01} of $K$ is
    nonnegative.
  \item The $(\pm 1)$-surgery manifold, say $N$, of $K$ bounds a
    4-manifold $W$ with positive definite intersection form on
    $H_2(W)/\text{torsion}$ and $H_1(W;\Z_p)=0$.  Consequently the
    Ozsv\'ath-Szab\'o $d$-invariant \cite{Ozsvath-Szabo:2003-2} of $N$
    associated to its unique $\spin^c$ structure is non-positive.
  \item In addition, if $p=2$ and $\overline\sigma_K(2\pi k/2^a)=0$
    for all $k$, then the Fintushel-Stern-Hedden-Kirk obstruction
    ~\cite{Hedden-Kirk:2010-1} associated to the $2^a$-fold cyclic
    branched cover vanishes.
  \end{enumerate}
  Consequently, if $K$ is $\Z_{(p)}$-homology $0$-bipolar, then
  $\overline\sigma_K(\theta)=0$, $\tau(K)=0$, and Hom's invariant
  $\epsilon(K)=0$ ~\cite{Hom:2011-1}.
\end{theorem}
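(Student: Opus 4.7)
The plan is to adapt each of the four obstructions in \cite[Section~4]{Cochran-Harvey-Horn:2012-1} from the homotopy $0$-positive setting to the $\Z_{(p)}$-homology setting.  Two inputs from the earlier part of the paper carry over: Lemma~\ref{lemma:basic-properties-of-Z_p-positon} supplies the homological structure of a $\Z_{(p)}$-homology $0$-positon $V$ --- first homology of the slicing-disk complement generated by meridians and $H_2$ generated by disjoint surfaces in the complement --- while Lemma~\ref{lemma:positivity-zero-framing} provides the zero-framing required to form $p^a$-fold cyclic branched covers of $V$ along $\Delta$.  In each obstruction the $4$-dimensional geometric construction is the same as in the homotopy case, and the work lies in verifying that the relevant homological control survives under the weaker hypothesis.

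For part~(3), I would attach a $(\pm 1)$-framed $2$-handle to $V$ along the zero-framed boundary of $\Delta$, as in the proof of Theorem~\ref{theorem:basic-construction-and-positivity}~(\ref{theorem-case:blow-down}).  A Mayer--Vietoris computation using Lemma~\ref{lemma:basic-properties-of-Z_p-positon} yields $H_1(W;\Z_{(p)})=0$ and hence $H_1(W;\Z_p)=0$.  Choosing the sign so that the sphere $\hat\Delta$ obtained by capping off $\Delta$ with the core of the $2$-handle has self-intersection $+1$, the form on $H_2(W)/\text{torsion}$ decomposes as the orthogonal direct sum of the positive-definite form on $H_2(V)/\text{torsion}$ and a rank-one positive form, so $W$ is positive definite.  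The Ozsv\'ath--Szab\'o $d$-invariant inequality under the hypothesis $H_1(W;\Z_p)=0$ gives $d(N)\leq 0$.  The inequality on $\tau(K)$ in~(2) follows either directly from the Ozsv\'ath--Szab\'o $4$-dimensional bound for $\tau$ applied to $\Delta \subset V$ --- which requires only $H_1(V;\Q)=0$, available from Lemma~\ref{lemma:basic-properties-of-Z_p-positon} --- or from~(3) via the standard relation between $\tau$ and $d$-invariants of $\pm 1$-surgery.

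For~(1), I would take the $p^a$-fold cyclic branched cover $\wt V \to V$ along $\Delta$ and verify, via Mayer--Vietoris applied to the cover together with Lemma~\ref{lemma:basic-properties-of-Z_p-positon}, the expected formulas for $b_2(\wt V)$ and for $H_1(\wt V;\Z_{(p)})$.  The $G$-signature theorem expresses $\sigma(\wt V) - p^a \sigma(V)$ as a sum of Levine--Tristram signatures of $K$ at $p^a$-th roots of unity, and combining with $\sigma(\wt V) \leq b_2(\wt V)$ and $\sigma(V)=b_2(V)$ yields non-positivity of the average of Tristram--Levine signatures at those roots; summing over $a$ exactly as in \cite{Cha-Ko:2000-1,Cochran-Harvey-Horn:2012-1} gives the non-positivity of $\overline\sigma_K$.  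The gauge-theoretic obstruction~(4) follows from the Hedden--Kirk argument once one has, for $p=2$, a positive-definite bound of the $2^a$-fold cyclic branched cover of the $(\pm 1)$-surgery manifold with vanishing $H_1(\,\cdot\,;\Z_2)$, obtained by combining~(3) with a further branched-cover construction.  The consequent statements for $\Z_{(p)}$-homology $0$-bipolar knots follow by applying the positive and negative cases to $K$ and to its mirror, using the standard identities $\tau(-K)=-\tau(K)$ and $\overline\sigma_{-K}=-\overline\sigma_K$, together with Hom's characterization of when $\epsilon$ vanishes in the presence of these vanishing invariants.

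The main obstacle is the passage from $\Z_{(p)}$-homology control of $V$ to analogous control of the $p^a$-fold cyclic branched covers $\wt V$ used in~(1) and~(4).  The usual transfer argument, which inverts the covering degree $p^a$, is unavailable in $\Z_{(p)}$, so the ranks of $H_1(\wt V;\Z_{(p)})$ and $H_2(\wt V)/\text{torsion}$ must instead be computed directly from the Mayer--Vietoris sequence of the cover together with the meridian-generation statement of Lemma~\ref{lemma:basic-properties-of-Z_p-positon}.  Once this bookkeeping is in place, the signature and gauge-theoretic inputs depend only on rational intersection forms and $\Z_p$-homology and go through with only minor modifications to the homotopy case.
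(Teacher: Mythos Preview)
Your treatment of (2), (3), (4) and the bipolar consequences matches the paper, which simply declares those arguments ``completely identical'' to \cite{Cochran-Harvey-Horn:2012-1}.

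For (1), however, your plan has a real gap. Applying the $G$-signature theorem to the \emph{total} signature of the $p^a$-fold branched cover $\wt V$ yields only
\[
\sigma(\wt V)-p^a\,\sigma(V)\;\le\;0,
\]
which is non-positivity of the \emph{sum} of the signature values at the nontrivial $p^a$-th roots of unity, not of each value separately. No amount of ``summing over $a$'' extracts pointwise non-positivity from these averaged inequalities: it is easy to write down symmetric step functions on the circle for which every such partial sum is $\le 0$ yet the function is positive on an interval. Since the theorem asserts $\overline\sigma_K(\theta)\le 0$ for every~$\theta$, this approach does not close.

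The paper's argument in Appendix~\ref{section:signature-and-0-positivity} bypasses the homology of the cover altogether and works eigenspace by eigenspace. A Casson--Gordon style defect lemma (Lemma~\ref{lemma:invariance-of-eigenspace-signature-defect}) identifies $\sigma_A(k/d)$ with $\sigma_{k,d}(V,\Delta)-\sign(V)$, where $\sigma_{k,d}$ is the signature of the intersection form restricted to the $e^{2\pi i k/d}$-eigenspace of the $d$-fold branched cover. Then an eigenspace-refined Euler-characteristic count (imported from \cite[Proposition~4.1]{Cochran-Harvey-Horn:2012-1}) bounds each $\sigma_{k,d}(V,\Delta)\le b_2(V)=\sign(V)$ \emph{individually}, giving $\sigma_A(k/d)\le 0$ at every rational $k/d$ with $d\nmid k$; density then finishes. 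In particular there is no restriction to $d=p^a$ and no need for $\Z_{(p)}$-homology control of~$\wt V$: the ``main obstacle'' you flag in your last paragraph is a red herring for this part. The paper also inserts a preliminary cabling step (via Theorem~\ref{theorem:basic-construction-and-positivity}~(\ref{theorem-case:positivity-generalize-satellite})) to reduce to a generalized Seifert surface with a single boundary component, a point your sketch omits.
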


We remark that in order to generalize the Levine-Tristram signature
result in \cite[Section~4]{Cochran-Harvey-Horn:2012-1}, we employ, in
Theorem~\ref{theorem:0-bipolar-obstructions}~(1), a generalization of
the Levine-Tristram signature to knots in rational homology spheres
which was introduced by Cha and Ko~\cite{Cha-Ko:2000-1}.  We give a
description of the invariant $\overline\sigma_K$ and a proof of
Theorem~\ref{theorem:0-bipolar-obstructions} (1) in
Appendix~\ref{section:signature-and-0-positivity}.

The proofs of other parts of
Theorem~\ref{theorem:0-bipolar-obstructions}, and
Theorem~\ref{theorem:d-invariant-obstruction} stated below, are
completely identical to those of the homotopy positive (resp.\ negative,
bipolar) cases given in \cite{Cochran-Harvey-Horn:2012-1}.

\begin{theorem}[Obstruction to being $\Z_{(p)}$-homology $1$-positive
  {\cite[Section~6]{Cochran-Harvey-Horn:2012-1}}]
  \label{theorem:d-invariant-obstruction}
  Suppose $K$ is a $\Z_{(p)}$-homology $1$-positive knot.  Then the
  $d$-invariant of $K$ associated to the $p^a$-fold cyclic branched
  cover $M$ is nonpositive, in the following sense: for some
  metabolizer $H\subset H_1(M)$ \textup{(}i.e. $|H|^2= |H_1(M)|$ and
  the $\Q/\Z$-valued linking form vanishes on $H$\textup{)} and a
  $\spin^c$ structure $s_0$ on $M$ corresponding to a spin structure
  on $M$, $d(M,s_0+\hat z)\le 0$ for any $z\in H$, where $\hat z$ is
  the Poincar\'e dual of~$z$.
\end{theorem}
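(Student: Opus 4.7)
The plan is to mirror the argument of Cochran--Harvey--Horn \cite[Section~6]{Cochran-Harvey-Horn:2012-1} for the homotopy $1$-positive case, replacing appeals to simple-connectedness and integral homology by the $\Z_{(p)}$-coefficient data supplied by our hypothesis. Let $V$ be a $\Z_{(p)}$-homology $1$-positon for $K$ with slicing disk $\Delta$ and surfaces $S_j$ generating $H_2(V)/\mathrm{torsion}$ such that $\pi_1(S_j)\subset \cP^1\pi_1(V-\Delta)$. By Lemma~\ref{lemma:basic-properties-of-Z_p-positon}, the meridian $\mu$ of $\Delta$ generates $H_1(V-\Delta;\Z_{(p)})\cong\Z_{(p)}$, giving a surjection $\pi_1(V-\Delta)\twoheadrightarrow \Z_{p^a}$ that sends $\mu\mapsto 1$. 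Combined with the zero-framing of $\Delta$ (available by Lemma~\ref{lemma:positivity-zero-framing}), this produces the $p^a$-fold cyclic branched cover $\widetilde V$ of $V$ along $\Delta$, with $\partial\widetilde V = M$.

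The core step is to establish three homological facts about $\widetilde V$: (i) $H_1(\widetilde V;\Q)=0$; (ii) the intersection form on $H_2(\widetilde V)/\mathrm{torsion}$ is positive definite; and (iii) $H := \ker\bigl(H_1(M)\to H_1(\widetilde V)/\mathrm{torsion}\bigr)$ is a metabolizer for the linking form on $H_1(M)$. The decisive input is $\pi_1(S_j)\subset \cP^1\pi_1(V-\Delta)$: since the classifying map $\pi_1(V-\Delta)\to\Z_{p^a}$ factors through $\pi_1(V-\Delta)/\cP^1$, each $S_j$ lifts to $p^a$ disjoint embedded copies in $\widetilde V$. A rank count via transfer and an Euler characteristic calculation shows that these lifts span $H_2(\widetilde V;\Q)$, and since they are mutually disjoint the intersection form is block diagonal and inherits positive definiteness from that of $V$. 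A $\Q$-coefficient Mayer--Vietoris argument then yields (i), and Poincar\'e--Lefschetz duality together with the standard linking form calculus gives~(iii).

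With these properties in hand, the theorem follows by applying the Ozsv\'ath--Szab\'o $d$-invariant bound for positive definite fillings of rational homology spheres to $\widetilde V$ exactly as in \cite[Section~6]{Cochran-Harvey-Horn:2012-1}: for each $z\in H$, the $\spin^c$ structure $s_0+\hat z$ on $M$ extends over $\widetilde V$ because $z$ lies in the image of $H_2(\widetilde V, M)\to H_1(M)$, and choosing an appropriate extension yields $d(M,s_0+\hat z)\le 0$. The main obstacle is the careful bookkeeping needed to pass between $\Z_{(p)}$-, $\Q$-, and $\Z_{p^a}$-coefficients; the observations immediately following Lemma~\ref{lemma:basic-properties-of-Z_p-positon} reduce this to two parallel standard computations essentially identical to those in the homotopy case, so no genuinely new idea beyond this translation is required.
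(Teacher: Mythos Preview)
Your plan is exactly what the paper does: it declares the proof ``completely identical'' to \cite[Section~6]{Cochran-Harvey-Horn:2012-1}, and the remark immediately after the theorem observes that it also follows from the Covering Positon Theorem~\ref{theorem:covering-positon-theorem}, whose proof carries out precisely the lifted-surfaces construction you sketch.

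One slip to correct: your metabolizer $H=\ker\bigl(H_1(M)\to H_1(\widetilde V)/\mathrm{torsion}\bigr)$ collapses to all of $H_1(M)$ once (i) holds, since $H_1(\widetilde V)/\mathrm{torsion}=0$; you want $H=\ker\bigl(H_1(M)\to H_1(\widetilde V)\bigr)$ instead, and to check that this has the correct order you should use the stronger vanishing $H_1(\widetilde V;\Z_{(p)})=0$ (which the argument of Theorem~\ref{theorem:covering-positon-theorem} gives) rather than merely the rational statement~(i).
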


\begin{remark}
  Note that Theorem \ref{theorem:d-invariant-obstruction} also follows
  from our Covering Positon
  Theorem~\ref{theorem:covering-positon-theorem} that is stated and
  proved in Section~\ref{section:covering_link_calculus}: if we take a
  $p^a$-fold branched cover $M$ of $S^3$ along $K$, then by
  Theorem~\ref{theorem:covering-positon-theorem}, $M$ bounds a
  $\Z_{(p)}$-homology $0$-position, to which a result of
  Ozsv\'ath-Szab\'o \cite{Ozsvath-Szabo:2003-2} applies to allow us to conclude
  that the $d$-invariant of $M$ with appropriate $\spin^c$ structures
  is non-positive.  Indeed our proof of Theorem
  \ref{theorem:covering-positon-theorem} involves some arguments which
  are similar to those
  in~\cite[Section~6]{Cochran-Harvey-Horn:2012-1}.
\end{remark}

We do not know whether the Rasmussen $s$-invariant obstruction in
\cite{Cochran-Harvey-Horn:2012-1} has a $\Z_{(p)}$-homology analogue.
Even the following weaker question is still left open.

\begin{question}
  If a knot $K$ is slice in a homology 4-ball (or more generally in a
  $\Z_{(p)}$- or $\Q$-homology 4-ball), then does $s(K)$ vanish?
\end{question}

We can also relate $\Z_{(p)}$-homology positivity to amenable von
Neumann $\rho$-invariants, following the idea of
\cite[Section~5]{Cochran-Harvey-Horn:2012-1} and using techniques
of~\cite{Cha:2010-01}.  We discuss this in more detail in
Appendix~\ref{section:rho-invariant-and-positivity} (see
Theorem~\ref{theorem:rho-invariant-and-positivity}).

\section{Covering link calculus}
\label{section:covering_link_calculus}

We follow \cite{Cha-Kim:2008-1} to give a formal description of
covering links.  Suppose $L$ is a link in a
$\Z_{(p)}$-homology 3-sphere~$Y$.  We consider the following two
operations that give new links: (C1) taking a sublink of $L$, and
(C2) taking the pre-image of $L$ in the $p^a$-fold cyclic cover of $Y$
branched along a component of $L$ with vanishing $\Q/\Z$-valued
self-linking.  Note that the $p^a$-fold cyclic branched cover
is again a $\Z_{(p)}$-homology sphere, by the argument of Casson and
Gordon~\cite[Lemma~2]{Casson-Gordon:1986-1}.

\begin{definition}
  \label{definition:covering-links}
  A link $\tilde L$ obtained from $L$ by a finite sequence of the
  operations (C1) and/or (C2) above is called a \emph{$p$-covering
    link of $L$ of height $\le h$}, where $h$ is the number of~(C2)
  operations.
\end{definition}

Often we say that the link $\tilde L$ in
Definition~\ref{definition:covering-links} is a \emph{height $h$}
$p$-covering link. This is an abuse of terminology without the $\leq$
sign; it is more appropriate to define the height of $\tilde L$ to be
minimum of the number of (C2) moves over all sequences of (C1) and
(C2) operations which produce $\tilde L$ from~$L$.  In all statements
in this paper, this abuse does not cause any problem, since if we
assign a height to a covering link which is not minimal, we obtain a
weaker conclusion from Theorem \ref{theorem:covering-positon-theorem}
than is optimal.

We remark that for an oriented link $L$, a covering link $\tilde L$ has
a well-defined induced orientation.  Since the choice of an
orientation is irrelevant for the purpose of the study of
$n$-positivity (or negativity, bipolarity) as discussed after
Theorem~\ref{theorem:basic-construction-and-positivity}, in this paper
we also call a covering link with some component's orientation
reversed a covering link.

\subsection{Covering Positon/Negaton Theorem}

The main theorem of this section is the following:

\begin{theorem}[Covering Positon/Negaton Theorem]
  \label{theorem:covering-positon-theorem}
  For $n>k$, a height $k$ $p$-covering link of a $\Z_{(p)}$-homology
  $n$-positive \textup{(}resp.\ negative, bipolar\textup{)} link is
  $\Z_{(p)}$-homology $(n-k)$-positive \textup{(}resp.\ negative,
  bipolar\textup{)}.
\end{theorem}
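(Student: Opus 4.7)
The plan is to reduce to the case of a single (C2) operation and iterate. Operation (C1) passes to a sublink and preserves $\Z_{(p)}$-homology $n$-positivity by Theorem~\ref{theorem:basic-construction-and-positivity}(\ref{theorem-case:positivity-sublink}), so an induction on the number of (C2) moves handles the general case; each successive branched cover is well-defined because every intermediate link is at least $\Z_{(p)}$-homology $0$-positive (since $n>k$) and thus admits a zero framing by Lemma~\ref{lemma:positivity-zero-framing}. The core claim is therefore: if $(Y,L)$ is $\Z_{(p)}$-homology $n$-positive with $n\ge 1$, and $\tilde L\subset \tilde Y$ is the preimage of $L$ in the $p^a$-fold cyclic branched cover along a component $K$ of $L$, then $(\tilde Y,\tilde L)$ is $\Z_{(p)}$-homology $(n-1)$-positive.

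Given a $\Z_{(p)}$-homology $n$-positon $V$ for $(Y,L)$ with slicing disks $\Delta_1,\ldots,\Delta_m$ and $\partial\Delta_1=K$, the candidate $(n-1)$-positon is $\tilde V$, the $p^a$-fold cyclic branched cover of $V$ along $\Delta_1$, with slicing disks $\tilde\Delta_i$ obtained by lifting. It is well-defined because Lemma~\ref{lemma:basic-properties-of-Z_p-positon}(\ref{lemma-case:H1-of-slice-disk-complement-in-positon}) provides a map $\pi_1(V-\nu(\Delta_1))\to \Z_{p^a}$ compatible with $\tilde Y\to Y$ on the boundary. The vanishing $H_1(\tilde V;\Z_{(p)})=0$ follows from a standard transfer argument for branched covers of $\Z_{(p)}$-homology 4-manifolds along properly embedded disks with trivial normal framing (in the spirit of \cite{Casson-Gordon:1986-1}), using that $|\Z_{p^a}|$ is invertible in $\Q$. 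Positive definiteness of the intersection form on $H_2(\tilde V)/\text{torsion}$ I would establish by decomposing $H_2(\tilde V;\Q)$ into isotypic components under the $\Z_{p^a}$ deck action: the trivial component is the pullback of the positive definite form on $H_2(V;\Q)$ (rescaled by $p^a$), and each nontrivial isotypic component remains positive definite by the $G$-signature theorem, exploiting that the zero framing renders the branch locus $\Delta_1$ normally trivial so that no branching correction appears.

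The heart of the argument is producing generating surfaces for $H_2(\tilde V;\Z_{(p)})/\text{torsion}$ whose fundamental groups lie in $\cP^{n-1}\pi_1(\tilde V-\bigsqcup\tilde\Delta_i)$. Each $S_j\subset V-\bigsqcup\Delta_i$ lifts to $p^a$ disjoint copies in the cover, because $\pi_1(S_j)\subset \cP^n\pi_1(V-\bigsqcup\Delta_i)$ automatically lies in the kernel of the covering homomorphism: any map from $\pi_1(V-\bigsqcup\Delta_i)$ to the $\Z_{(p)}$-module $\Z_{p^a}$ factors through $H_1(V-\bigsqcup\Delta_i;\Z_{(p)})$ and hence annihilates $\cP^1$. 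The required derived-series condition on each lift then reduces to the following group-theoretic lemma: \emph{if $H$ is a normal subgroup of a group $G$ with $G/H$ a finite $p$-group, then $\cP^n G\subset \cP^{n-1} H$ for all $n\ge 1$.} This is proved by a short induction, with the base case $n=1$ being exactly the observation just made, and the inductive step using the natural map of abelianizations induced by the inclusion $\cP^n G\hookrightarrow \cP^{n-1}H$. A Mayer-Vietoris computation for $\tilde V=(\tilde V-\nu(\tilde\Delta_1))\cup\nu(\tilde\Delta_1)$ then confirms that the lifts of the $S_j$ generate $H_2(\tilde V;\Z_{(p)})/\text{torsion}$. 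The main technical obstacle I anticipate is coordinating the $G$-signature computation with this Mayer-Vietoris identification, ensuring that no additional classes from the branching---which would lack the derived-series control---are required as generators.
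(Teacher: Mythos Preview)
Your overall architecture---lift the positon to the branched cover and lift the surfaces $S_j$---matches the paper's, and your derived-series lemma is correct. But two of your mechanisms fail as stated. First, transfer does not yield $H_1(\tilde V;\Z_{(p)})=0$: transfer only controls $\Q$-homology, and $p^a$ is \emph{not} invertible in $\Z_{(p)}$, so $p$-torsion is untouched. The actual Casson--Gordon mechanism (and the paper's) is the $p$-group Betti-number bound of Lemma~\ref{lemma:betti-number-bound-for-p-covers}: for a $p$-group cover, $\dim_{\Z_p} H_k(\text{cover};\Z_p) \le |G|\cdot \dim_{\Z_p} H_k(\text{base};\Z_p)$. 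Applied to the pair $(V-\Delta_1,\mu)$ this gives $H_1(\tilde V-\tilde\Delta_1,\tilde\mu;\Z_p)=0$, whence $H_1(\tilde V;\Z_{(p)})=0$.

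Second, and more substantively, the $G$-signature route to positive definiteness is both unnecessary and incomplete: for a $4$-manifold with boundary the equivariant signature formula carries boundary defect terms (essentially $\rho$-invariants of $\partial\tilde V$) that you have not controlled. The paper bypasses this entirely. The lifts $\Sigma_{j,k}$ of the $S_j$ are disjoint with self-intersection $+1$, so their intersection matrix is the identity and the map from the free abelian group on them into $H_2(\tilde V)/\text{torsion}$ is injective. A \emph{second} application of Lemma~\ref{lemma:betti-number-bound-for-p-covers}---this is precisely what your Mayer--Vietoris sketch cannot supply on its own---yields $b_2(\tilde V;\Q)\le p^a\cdot b_2(V;\Q)$, forcing the $\Sigma_{j,k}$ to be a basis. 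Positive definiteness is then automatic (the form is the identity), and the ``additional classes from the branching'' you anticipated are excluded by this rank count rather than by any signature computation. The $p$-group Betti bound is thus the single missing ingredient, doing double duty for $H_1$ and the rank of~$H_2$.
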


We remark that this may be compared with Covering Solution Theorem
\cite[Theorem~3.5]{Cha:2007-2} that provides a similar method for the
$n$-solvable filtration of~\cite{Cochran-Orr-Teichner:1999-1}.

\begin{proof}
  It suffices to prove the positivity case.  Suppose $L$ is a link in
  a $\Z_{(p)}$-homology 3-sphere~$Y$, and $V$ is a $\Z_{(p)}$-homology
  $n$-positon for $L$ with slicing disks~$\Delta_i$.  It suffices to
  show the following:
  \begin{enumerate}
  \item A sublink of $L$ is $\Z_{(p)}$-homology $n$-positive.
  \item Suppose $n>0$, $\tilde Y$ is a $p^a$-fold cyclic branched
    cover of $Y$ along the first component of $L$, and $\tilde L$ is
    the pre-image of $L$ in $\tilde Y$.  Then $\tilde L$ is
    $\Z_{(p)}$-homology $(n-1)$-positive.
  \end{enumerate}

  Item (1) has been already discussed in
  Theorem~\ref{theorem:basic-construction-and-positivity}.  To show
  (2), we first observe that by
  Lemma~\ref{lemma:basic-properties-of-Z_p-positon} (1) applied to the
  first component as a sublink (see also the observation below
  Lemma~\ref{lemma:basic-properties-of-Z_p-positon}),
  $H_1(V-\Delta_1;\Z_{p^a})$ is isomorphic to $\Z_{p^a}$ and generated
  by a meridional curve of~$\Delta_1$.  Therefore we can define the
  $p^a$-fold cyclic branched cover of $\tilde V$ of $V$ along
  $\Delta_1$.  In addition, since the inclusion induces an isomorphism
  $H_1(Y-\partial\Delta_1;\Z_{p^a})\to H_1(V-\Delta_1;\Z_{p^a})$, it
  follows that $\partial \tilde V$ is the ambient space $\tilde Y$ of
  the covering link~$L$, by the above discussion on the zero-framing
  and branched covering construction for~$Y$, namely Lemma
  \ref{lemma:positivity-zero-framing} and its preceding paragraph.

  We will show that $\tilde V$ is a $\Z_{(p)}$-homology
  $(n-1)$-positon for~$\tilde L$.

  For notational convenience, we denote the pre-image of a subset
  $A\subset V$ under $\tilde V \to V$ by $\tilde A \subset \tilde V$.
  Observe that components of the $\tilde\Delta_i$ form disjoint
  slicing disks for $\tilde L$ in~$\tilde V$, since each $\Delta_i$ is
  simply connected.  Let $\mu \subset V$ be a meridional circle
  for~$\Delta_1$.  Then $\tilde \mu$ is a meridional circle of the
  2-disk $\tilde\Delta_1$.  By
  Lemma~\ref{lemma:basic-properties-of-Z_p-positon} (1),
  $H_1(V-\Delta_1,\mu;\Z_p)=0$.  From the following fact, it follows
  that $H_1(\tilde V-\tilde\Delta_1,\tilde\mu;\Z_p) \cong 0$.

  \begin{lemma}[{\cite[Corollary~4.10]{Cochran-Harvey:2007-01},
    \cite[p.~910]{Cha:2007-2}}]
    \label{lemma:betti-number-bound-for-p-covers}
    Suppose $G$ is a $p$-group and $C_*$ is a projective chain complex
    over the group ring~$\Z_p G$ with $C_k$ finitely generated.  Then,
    for any $k$, $\dim_{\Z_p} H_k(C_*) \le |G|\cdot\dim_{\Z_p} H_k(\Z_p
    \otimes_{\Z_p G} C_*)$.
  \end{lemma}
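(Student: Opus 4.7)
The plan is to prove this by induction on $|G|$, reducing to the case $G=\Z/p$, where the nilpotent structure of the augmentation ideal can be exploited directly. First, since $G$ is a finite $p$-group and $\Z_p$ has characteristic $p$, the group ring $\Z_pG$ is a (noncommutative) local ring whose unique maximal ideal is the augmentation ideal $I_G$, with $\Z_pG/I_G\cong\Z_p$. By Nakayama's lemma, every finitely generated projective $\Z_pG$-module is free, so we may assume $C_k\cong(\Z_pG)^{r_k}$. Writing $\bar C_*=\Z_p\otimes_{\Z_pG}C_*$, the goal becomes $\dim_{\Z_p}H_k(C_*)\le|G|\cdot\dim_{\Z_p}H_k(\bar C_*)$.

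For the inductive step, a nontrivial $p$-group has nontrivial center, so choose a central subgroup $H\le G$ of order $p$ and set $G'=G/H$. Since $\Z_pG$ is free as a $\Z_pH$-module (with basis any set of coset representatives of $G/H$), $C_*$ restricts to a complex of finitely generated free $\Z_pH$-modules. Because $H$ is normal in $G$, the complex $C^H_*:=\Z_p\otimes_{\Z_pH}C_*$ inherits the structure of a complex of free $\Z_pG'$-modules of the same ranks $r_k$, satisfying $\Z_p\otimes_{\Z_pG'}C^H_*=\bar C_*$. Granted the base case $|G|=p$ and the inductive hypothesis applied to $C^H_*$ and $G'$, one then obtains
\[
\dim H_k(C_*)\le p\cdot\dim H_k(C^H_*)\le p\,|G'|\cdot\dim H_k(\bar C_*)=|G|\cdot\dim H_k(\bar C_*).
\]

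The heart of the argument is the base case $G=\Z/p=\langle t\rangle$. In characteristic $p$ we have $\Z_pG\cong\Z_p[s]/(s^p)$ with $s=t-1$, so the augmentation ideal is principal and generated by a nilpotent element. I would filter $C_*$ by the subcomplexes $s^iC_*$ for $i=0,1,\ldots,p$ (with $s^pC_*=0$), and iterate the long exact homology sequence of
\[
0\to s^{i+1}C_*\to s^iC_*\to s^iC_*/s^{i+1}C_*\to 0
\]
to obtain $\dim H_k(C_*)\le\sum_{i=0}^{p-1}\dim H_k(s^iC_*/s^{i+1}C_*)$. The main point to verify---and the step that I expect to require the most care---is that multiplication by $s^i$ induces a chain isomorphism $\bar C_*\xrightarrow{\cong}s^iC_*/s^{i+1}C_*$ for each $i<p$; this uses $\Z_pG$-linearity of $d$ (so $d$ commutes with $s$) together with the computation $\ker(s^j\colon C_*\to C_*)=s^{p-j}C_*\subseteq sC_*$ valid for $1\le j\le p-1$. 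Once this identification is in place, each term in the sum above equals $\dim H_k(\bar C_*)$, giving the base-case bound $p\cdot\dim H_k(\bar C_*)$ and completing the induction.
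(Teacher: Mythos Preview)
Your proof is correct. The paper, however, does not supply its own proof of this lemma---it is quoted from Cochran--Harvey and Cha as a known result and used as a black box inside the proof of Theorem~\ref{theorem:covering-positon-theorem}---so there is no in-paper argument to compare against.

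For the record, the induction on $|G|$ via a central order-$p$ subgroup, with the base case handled by the nilpotent filtration $s^iC_*$ coming from $\Z_p[\Z/p]\cong\Z_p[s]/(s^p)$, is a standard and complete argument. The step you flag as delicate (that multiplication by $s^i$ gives a chain isomorphism $\bar C_*\cong s^iC_*/s^{i+1}C_*$) goes through exactly as you say once the modules are free, and your use of Nakayama's lemma over the local ring $\Z_pG$ to reduce projective to free is legitimate. One minor variant that bypasses the induction entirely is to filter $C_*$ directly by powers of the full augmentation ideal $I_G$ (which is nilpotent since $G$ is a $p$-group in characteristic~$p$): each successive quotient $I_G^iC_*/I_G^{i+1}C_*$ is chain-isomorphic over $\Z_p$ to a direct sum of $\dim_{\Z_p}(I_G^i/I_G^{i+1})$ copies of $\bar C_*$, and these dimensions sum to~$|G|$.
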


  That $H_1(\tilde V-\tilde\Delta_1,\tilde\mu;\Z_p) \cong 0$ implies
  that $H_1(\tilde V-\tilde\Delta_1,\tilde\mu;\Z_{(p)}) \cong 0$ and
  $H_1(\tilde V-\tilde\Delta_1;\Z_{(p)})$ is generated by the class
  of~$\tilde\mu$.  It follows that $H_1(\tilde V;\Z_{(p)})=0$ since
  the homotopy type of $\tilde V$ is obtained by attaching a 2-cell to
  $\tilde V-\tilde \Delta_1$ along $\tilde\mu$.

  For later use, we also claim that $H_1(\tilde
  V-\tilde\Delta_1;\Z_{(p)})\cong\Z_{(p)}$, generated by the
  class~$[\tilde\mu]$.  For, since the map $H_1(\tilde
  V-\tilde\Delta_1;\Z_{(p)}) \to H_1(V-\Delta_1;\Z_{(p)})\cong
  \Z_{(p)}$ sends $[\tilde\mu]$ to $p^a [\mu]$, which is a multiple of
  a generator, $H_1(\tilde V-\tilde\Delta_1;\Z_{(p)})$ is not
  $\Z_{(p)}$-torsion.  The claim follows.  A consequence is
  that $H_1(\tilde V-\tilde\Delta_1;\Z_p)$ is isomorphic to $\Z_p$ and
  generated by $[\tilde\mu]$ (see the paragraph after
  Lemma~\ref{lemma:basic-properties-of-Z_p-positon}).

  Observe that the $p^a$-fold cover $\tilde V-\bigsqcup\tilde\Delta_i$
  of $V-\bigsqcup\Delta_i$ has fundamental group
  \[\textstyle
  \pi_1(\tilde V-\bigsqcup\tilde\Delta_i) = \Ker \{
  \pi_1(V-\bigsqcup\Delta_i) \to
  H_1(V-\bigsqcup\Delta_i)/\text{torsion}\cong \Z^m \to \Z_{p^a} \}
  \]
  where final map sends the first meridian to $1\in\Z_{p^a}$ and the other
  meridians to~$0$.  (For convenience we view the fundamental group of
  a covering space as a subgroup of the fundamental group of its base
  space.)  Also, from the definition we have
  \[\textstyle
  \cP^1 \pi_1(V-\bigsqcup \Delta_i) = \Ker\{\pi_1(V-\bigsqcup\Delta_i)
  \rightarrow H_1(V-\bigsqcup\Delta_i;\Z_{(p)})\},
  \]
  where the above map with kernel $\cP^1 \pi_1(V-\bigsqcup \Delta_i)$
  decomposes as
  \[\textstyle
  \pi_1(V-\bigsqcup\Delta_i) \to
  H_1(V-\bigsqcup\Delta_i)/\text{torsion}\cong \Z^m \hookrightarrow
  H_1(V-\bigsqcup\Delta_i;\Z_{(p)})
  \]
  with the rightmost map injective, by
  Lemma~\ref{lemma:basic-properties-of-Z_p-positon}~(\ref{lemma-case:H1-of-slice-disk-complement-in-positon})
  and the paragraph below
  Lemma~\ref{lemma:basic-properties-of-Z_p-positon}.  From this it
  follows that $\cP^1 \pi_1(V-\bigsqcup\Delta_i) \subset
  \pi_1(\tilde V-\bigsqcup\tilde\Delta_i)$.

  Suppose that $S_j$ are disjointly embedded surfaces in $V$
  satisfying Definition~\ref{definition:homology-n-positon}.  By
  definition and the above observation, we have
  \[\textstyle
  \pi_1(S_j) \subset \cP^n \pi_1(V-\bigsqcup \Delta_i) \subset \cP^1
  \pi_1(V-\bigsqcup \Delta_i) \subset \pi_1(\tilde
  V-\bigsqcup\tilde\Delta_i).
  \]
  By the lifting criterion, it follows that the surfaces $S_j$ lift to
  $\tilde V -\bigsqcup \tilde\Delta_i$, that is, $\tilde S_j$ consists
  of $p^a$ lifts $\{\Sigma_{j,k}\}_{k=1}^{p^a}$ of~$S_j$.  The
  $\Sigma_{j,k}$ are mutually disjoint, and are disjoint to the
  slicing disks for $\tilde L$.  Furthermore, each $\Sigma_{j,k}$ has
  self intersection $+1$ in $\tilde V$ since so does $S_j$ in~$V$.  As
  subgroups of $\pi_1(V-\bigsqcup \Delta_i)$, we have
  \[\textstyle
  \pi_1(\Sigma_{j,k}) = \text{a conjugate of } \pi_1(S_j)\subset \cP^n
  \pi_1(V-\bigsqcup \Delta_i) \subset \cP^{n-1}\pi_1(\tilde V-\bigsqcup
  \tilde\Delta_i).
  \]
  It is easy to see from the definitions that the last inclusion
  follows from the fact that $\cP^1 \pi_1(V-\bigsqcup \Delta_i)
  \subset \pi_1(\tilde V-\bigsqcup \tilde\Delta_i)$.

  It remains to prove that the $\Sigma_{j,k}$ generate $H_2(\tilde
  V)/\text{torsion}$.  Denote the $i$th Betti number by
  $b_i(-;\Q)=\dim_\Q H_i(-;\Q)$ and $b_i(-;\Z_p)=\dim_{\Z_p}
  H_i(-;\Z_p)$.  We claim the following:
  \begin{align*}
    b_2(\tilde V;\Q) &\le b_2(\tilde V;\Z_p) = b_2(\tilde V-\tilde
    \Delta_1;\Z_p) \le p^a\cdot b_2(V-\Delta_1;\Z_p)
    \\
    &= p^a \cdot b_2(V;\Z_p) = p^a\cdot b_2(V;\Q).
  \end{align*}
  The first inequality is from the universal coefficient theorem.  The
  second part is obtained by a Mayer-Vietoris argument for $(\tilde
  V-\tilde\Delta_1) \cup \text{(2-cell)} \simeq \tilde V$, using that
  our previous observation that $H_1(\tilde
  V-\tilde\Delta_1;\Z_p)\cong \Z_p$ is generated by~$\tilde\mu$.  The
  third part follows from
  Lemma~\ref{lemma:betti-number-bound-for-p-covers}.  The fourth part
  is again by a Mayer-Vietoris argument as before, for $(V-\Delta_1)
  \cup \text{(2-cell)} \simeq V$.  The last part is shown by
  Lemma~\ref{lemma:basic-properties-of-Z_p-positon}~(2) and its
  accompanying paragraph; the fact that the torsion part of $H_2(V)$
  has order coprime to~$p$.

  Now, let $h$ be the map from the free abelian group $F$ generated by
  the $\Sigma_{j,k}$ into $H_2(\tilde V)/\text{torsion}$ sending
  $\Sigma_{j,k}$ to its homology class.  The fact that the surfaces
  $\Sigma_{j,k}$ are disjoint and have self intersection one implies
  that the composition
  \[
  F\xrightarrow{h} H_2(\tilde V)/\text{torsion}
  \xrightarrow{\lambda^{ad}} \Hom(H_2(\tilde
  V)/\text{torsion},\Z) \xrightarrow{h^*} \Hom(F,\Z)
  \]
  is the adjoint of a form represented by the identity matrix, where
  $\lambda^{ad}$ is (the adjoint of) the intersection pairing.  It
  follows that the initial map $h$ is injective.  Since $b_2(\tilde
  V;\Q) \le p^a\cdot b_2(V;\Q) = \rank F$, it follows that $b_2(\tilde
  V;\Q)=p^a\cdot b_2(V;\Q)=\rank F$.  Since all the terms in the above
  sequence are free abelian groups of the same rank, it follows that
  all the maps, particularly $h$, are isomorphisms.  This completes
  the proof of Theorem \ref{theorem:covering-positon-theorem}.
\end{proof}

\begin{remark}
  Generalizing Definition~\ref{definition:homology-n-positon}, we can
  define a relation on links, similarly to
  \cite[Definition~2.1]{Cochran-Harvey-Horn:2012-1}: for links
  $L\subset Y$ and $L\subset Y'$ in $\Z_{(p)}$-homology spheres, we
  write $L \ge_{(\Z_{(p)},n)} L'$ if there is a 4-manifold $V$
  satisfying the following.  (1) $H_1(V;\Z_{(p)})=0$.  (2) There exist
  disjointly embedded annuli $C_i$ in $V$ satisfying
  $\partial(V,\bigsqcup C_i) = (Y,L)\sqcup -(Y',L')$ and that both $C_i
  \cap Y$, $C_i \cap Y'$ are nonempty for each $i$.  (3) The
  intersection pairing on $H_2(V)/\text{torsion}$ is positive definite
  and there are disjointly embedded surfaces $S_j$ in $V-\bigsqcup C_i$
  which generate $H_2(V)/\text{torsion}$ and satisfy
  $\pi_1(S_j)\subset \cP^n \pi_1(V-\bigsqcup C_i)$.  Note that $L$ is
  $\Z_{(p)}$-homology $n$-positive if and only if $L
  \ge_{(\Z_{(p)},n)}$ (unlink).

  Then the arguments of the proof of
  Theorem~\ref{theorem:covering-positon-theorem} show the following
  generalized statement: \emph{If $L \ge_{(\Z_{(p)},n)} L'$ and
    $\tilde L$ and $\tilde L'$ are height $k$ $p$-covering links of
    $L$ and $L'$ obtained by applying the same sequence of operations
    \textup{(C1)} and \textup{(C2)}, then $\tilde L \ge_{(\Z_{(p)},n-k)} \tilde L'$.}
  In order to interpret the idea of the same sequence of operations,
  we use the annuli $C_i$ to pair up components of $L$ and $L'$.
\end{remark}

\subsection{Examples: Bing doubles}
\label{subsection:bing-doubles}

Theorem~\ref{theorem:covering-positon-theorem} applied to
covering link calculus results for Bing doubles in the literature
immediately gives various interesting examples which are often
topologically slice links.

We denote by $B(L)$ the Bing double of a link $L$ in $S^3$, and for $n
\geq 1$ define $B_n(L) = B(B_{n-1}(L))$ to be the $n$th iterated Bing
double, where by convention, $B_0(L)$ is $L$ itself.  In this paper
Bing doubles are always untwisted.  We remark that if $K$ is
topologically (resp.\ smoothly) slice, then $B_n(K)$ is topologically
(resp.\ smoothly) slice.  The converse is a well-known open problem.

For an oriented knot $K$, we denote the reverse of~$K$ by $K^r$.

\begin{theorem}
  \label{theorem:covering-link-iterated-bing-double}
  If $B_n(K)$ is $\Z_{(p)}$-homology $(k+2n-1)$-positive
  \textup{(}resp.\ negative, bipolar\textup{)}, then $K\#K^r$ is
  $\Z_{(p)}$-homology $k$-positive \textup{(}resp.\ negative,
  bipolar\textup{)}.
\end{theorem}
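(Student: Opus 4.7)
The approach is to exhibit $K\#K^r$ as a height-$(2n-1)$ $p$-covering link of $B_n(K)$, after which the Covering Positon/Negaton Theorem (Theorem~\ref{theorem:covering-positon-theorem}) finishes the argument mechanically. Explicitly, setting the positivity level to $N=k+2n-1$ and the covering height to $h=2n-1$, Theorem~\ref{theorem:covering-positon-theorem} delivers that the height-$h$ covering link $K\#K^r$ is $\Z_{(p)}$-homology $(N-h)=k$-positive; the negative and bipolar variants then follow in the same way from the corresponding parts of Theorem~\ref{theorem:covering-positon-theorem}.

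The remaining content is the covering link calculus claim: for every $n\geq 1$, $B_n(K)$ admits a $p$-covering link of height $2n-1$ that is $K\#K^r$. I would prove this by induction on~$n$. The base case $n=1$ is the essential geometric input and is in the spirit of the covering link calculus for Bing doubles developed in~\cite{Cha-Kim:2008-1}: here $B(K)=A\cup B$ has two unknotted components with $\lk(A,B)=0$, so taking the $2$-fold cyclic branched cover of $S^3$ along~$A$ (one (C2) operation) yields $S^3$, and the preimage of~$B$ is a $2$-component link interchanged by the deck involution. A careful analysis of the Bing pattern in~$\nu(K)$ together with an appropriate sublink choice (a (C1) operation) identifies the relevant component of the lift with the knot $K\#K^r$, where the reverse factor $K^r$ arises naturally from the orientation-reversing action of the deck transformation on the satellite construction. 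For the inductive step, I would apply the base-case reduction to an outermost Bing-double pair inside $B_{n+1}(K)=B(B_n(K))$ (together with an additional sublink selection) to reduce to a covering problem on $B_n(K)$, to which the inductive hypothesis supplies the remaining $2n-1$ (C2) operations, for a total of $2(n+1)-1=2n+1$.

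The principal obstacle is the geometric verification in the base case: one must show that the branched-cover lift of the second Bing-double component, after the correct sublink selection, is genuinely $K\#K^r$ rather than, for example, $K$ or $K\sqcup K$. This requires tracking both the knot type and the orientation of the lifted component in $\nu(K)$ under the 2-fold branched cover. Once this identification is secured, the induction is essentially bookkeeping, and the final invocation of Theorem~\ref{theorem:covering-positon-theorem} is immediate.
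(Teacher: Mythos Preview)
Your overall strategy---exhibit $K\#K^r$ as a height-$(2n-1)$ $p$-covering link of $B_n(K)$, then apply Theorem~\ref{theorem:covering-positon-theorem}---is exactly what the paper does. The paper, however, does not prove the covering link claim; it simply cites \cite{Cha-Livingston-Ruberman:2006-1}, \cite{Cha-Kim:2008-1}, \cite{Livingston-VanCott:2011-01}, and \cite{VanCott:2009-01} for the fact that $K\#K^r$ is a height-$(2n-1)$ $p$-covering link of $B_n(K)$ for every prime~$p$, and then invokes Theorem~\ref{theorem:covering-positon-theorem}.

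Your sketch of that covering link fact has two genuine problems. First, your base case takes a $2$-fold branched cover, but operation~(C2) is by definition a $p^a$-fold cover for the fixed prime~$p$; when $p$ is odd a $2$-fold cover is not a (C2) move at all, so your argument as written only addresses $p=2$. The cited arguments handle arbitrary $p$ by taking a $p^a$-fold cover over an unknotted component and then selecting a single suitable component of the preimage. Second, the inductive step does not work as you describe it. Branching over one component of an outermost Bing pair in $B_{n+1}(K)$ and passing to a sublink does \emph{not} recover $B_n(K)$: every one of the remaining $2^{n+1}-2$ components has linking number zero with the branching component and therefore lifts to $p^a$ disjoint copies, producing a link far larger and differently organized than $B_n(K)$. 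Your operation count reflects this confusion: you describe one (C2) move in the inductive step (``the base-case reduction'' plus a (C1) sublink), yet your total $2(n+1)-1=2n+1$ requires two. The reductions in the cited papers are more delicate---they track explicit companion knots through the iteration rather than landing back on $B_n(K)$---so you should appeal directly to those references rather than this informal induction.
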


\begin{proof}
  The following fact is due to
  Cha-Livingston-Ruberman~\cite{Cha-Livingston-Ruberman:2006-1},
  Cha-Kim~\cite{Cha-Kim:2008-1}, Livingston-Van
  Cott~\cite{Livingston-VanCott:2011-01} and Van
  Cott~\cite{VanCott:2009-01}: for any knot $K$ in $S^3$ and any prime
  $p$, $K\# K^r$ is a height $(2n-1)$ $p$-covering link of $B_n(K)$.
  Therefore by Theorem~\ref{theorem:covering-positon-theorem}, the
  conclusion follows.
\end{proof}

\begin{corollary}
  \label{corollary:iterated-bing-double-for-nonzero-tau}
  If $\tau(K) \ne 0$, then $B_n(K)$ is not $\Z_{(p)}$-homology
  $(2n-1)$-bipolar for any~$p$.
\end{corollary}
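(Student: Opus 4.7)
The plan is to combine Theorem \ref{theorem:covering-link-iterated-bing-double} (with $k=0$) with the $\tau$-invariant obstruction to $\Z_{(p)}$-homology $0$-bipolarity from Theorem \ref{theorem:0-bipolar-obstructions}, and then exploit the additivity of $\tau$ under connected sum together with its invariance under orientation reversal. I expect this to be essentially a one-line deduction, not requiring any genuine work beyond invoking these two theorems.

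More concretely, I would argue by contradiction. Suppose that $B_n(K)$ is $\Z_{(p)}$-homology $(2n-1)$-bipolar for some prime $p$. Setting $k = 0$ in Theorem \ref{theorem:covering-link-iterated-bing-double}, we obtain that $K \# K^r$ is $\Z_{(p)}$-homology $0$-bipolar. Then the ``consequently'' clause of Theorem \ref{theorem:0-bipolar-obstructions} gives $\tau(K \# K^r) = 0$.

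The final step is to observe that $\tau(K \# K^r) = 2\tau(K)$. This uses two standard properties of $\tau$: its additivity under connected sum and the fact that $\tau$ is unchanged under reversal of string orientation, so that $\tau(K^r) = \tau(K)$. Hence $2\tau(K) = 0$ and, since $\tau$ is integer valued, $\tau(K) = 0$, contradicting the hypothesis.

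The only mildly nontrivial ingredient is the pair of cited properties of $\tau$, which are standard facts from Ozsv\'ath-Szab\'o-Rasmussen knot Floer homology and require no argument here. There is no real obstacle: the corollary is a direct packaging of the covering link reduction with an existing $0$-bipolar obstruction, which is precisely the kind of application that Theorem \ref{theorem:covering-positon-theorem} was designed to enable.
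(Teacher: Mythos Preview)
Your argument is correct and is essentially the same as the paper's: both invoke Theorem~\ref{theorem:covering-link-iterated-bing-double} with $k=0$ to reduce to $K\#K^r$ being $\Z_{(p)}$-homology $0$-bipolar, and then use the $\tau$-obstruction from Theorem~\ref{theorem:0-bipolar-obstructions} together with $\tau(K\#K^r)=2\tau(K)$. The only cosmetic difference is that the paper splits into the positive and negative cases separately (obtaining $\tau(K)\ge 0$ and $\tau(K)\le 0$), whereas you appeal directly to the packaged ``consequently'' clause of Theorem~\ref{theorem:0-bipolar-obstructions}; these amount to the same thing.
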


It is well-known that there are topologically slice knots $K$ with
$\tau(K)\ne 0$ (e.g.\ $K=$ the positive Whitehead double of any knot
$J$ with $\tau(J)>0$, by Hedden~\cite{Hedden:2007-1}).  For such a
knot $K$, Corollary~\ref{corollary:iterated-bing-double-for-nonzero-tau}
implies that $B_n(K)$ is a topologically slice link which is not
$(2n-1)$-bipolar.

\begin{proof}[Proof of Corollary \ref{corollary:iterated-bing-double-for-nonzero-tau}]
  Suppose $B_n(K)$ is $\Z_{(p)}$-homology $(2n-1)$-positive for some
  $p$.  Then $K\# K^r$ is $\Z_{(p)}$-homology $0$-positive by
  Theorem~\ref{theorem:covering-link-iterated-bing-double}.  It
  follows that $2\tau(K) = \tau(K\# K^r)\ge 0$ by
  Theorem~\ref{theorem:0-bipolar-obstructions}.  Similarly,
  $\tau(K)\le 0$ if $B_n(K)$ is $\Z_{(p)}$-homology $(2n-1)$-negative.
\end{proof}

While the non-bipolarity of the above links is easily derived by
applying our method, we do not know the precise bipolar height of
these links.  The following lemma is useful in producing highly
bipolar links:

\begin{lemma}
  \label{lemma:positivity-or-bing-double}
  If $L$ is $n$-positive \textup{(}resp.\ negative, bipolar\textup{)},
  then $B_k(L)$ is $(k+n)$-positive \textup{(}resp.\ negative,
  bipolar\textup{)}.  The $\Z_{(p)}$-homology analogue holds too.
\end{lemma}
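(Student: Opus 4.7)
The strategy is induction on $k$. Since $B_0(L)=L$ the case $k=0$ is trivial, and by writing $B_k(L)=B(B_{k-1}(L))$ the inductive step reduces everything to the assertion that Bing doubling raises the level by exactly one: if $L$ is $n$-positive (resp.\ negative, bipolar), then $B(L)$ is $(n+1)$-positive (resp.\ negative, bipolar). Geometrically, Bing doubling replaces each slicing disk with a pair whose meridians generate a group in which the \emph{old} meridian becomes null-homologous, which shifts the derived series condition up by one.

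Let $V$ be an $n$-positon for $L$ with slicing disks $\Delta_1,\dots,\Delta_m$ and identify $\nu(\Delta_i)\cong D^2\times D^2$ with $\Delta_i=D^2\times 0$. Because the Bing double of the unknot is smoothly slice, it bounds a pair of disjoint, smoothly embedded disks $D_i^1\sqcup D_i^2$ in $D^2\times D^2$, with boundary the Bing pattern sitting in $S^1\times D^2\subset\partial(D^2\times D^2)$. For each $i$, I would replace $(\nu(\Delta_i),\Delta_i)$ by $(D^2\times D^2,D_i^1\sqcup D_i^2)$ along the zero framing, as in Theorem~\ref{theorem:basic-construction-and-positivity}~(\ref{theorem-case:positivity-generalize-satellite}). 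The resulting 4-manifold $V'$ is diffeomorphic to $V$ but with boundary link $B(L)$ and new slicing disks $\{D_i^j\}$.

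The first two defining conditions for an $(n+1)$-positon are immediate from $V'\cong V$: we have $\pi_1(V')\cong 0$ and $H_1(V';\Z_{(p)})=0$, and the $D_i^j$ are slicing disks for $B(L)$ by construction. For the intersection pairing and surface conditions I reuse the original surfaces $S_j\subset V-\bigsqcup \Delta_i$, after isotoping them off $\bigsqcup \nu(\Delta_i)$; they still generate $H_2(V')/\mathrm{torsion}=H_2(V)/\mathrm{torsion}$ with positive definite intersection form. The remaining task is the derived-series condition $\pi_1(S_j)\subset \cP^{n+1}\pi_1(V'-\bigsqcup_{i,j}D_i^j)$, together with the analogous statement for the ordinary derived series.

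The key geometric input is that the Bing pattern is null-homologous inside the solid torus $S^1\times D^2$, so the meridian of the ambient solid torus is null-homologous in the complement of the pattern. Pushing this through to the global picture, under the inclusion $f\colon V-\bigsqcup \Delta_i \hookrightarrow V'-\bigsqcup_{i,j}D_i^j$ each original meridian $\mu_i$ becomes null-homologous (with $\Z$, hence $\Z_{(p)}$, coefficients), and therefore lies in $\cP^1 \pi_1(V'-\bigsqcup_{i,j}D_i^j)$ (and in the commutator subgroup for the homotopy version). By Lemma~\ref{lemma:basic-properties-of-Z_p-positon}~(\ref{lemma-case:H1-of-slice-disk-complement-in-positon}) the $\mu_i$ generate $H_1(V-\bigsqcup \Delta_i;\Z_{(p)})$, so the entire image of $f_*$ lies in $\cP^1$ of the target. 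Combining this with the recursive identity $\cP^n(\cP^1 H)=\cP^{n+1}H$ and the functoriality of $\cP^\bullet$ (and the parallel identities $(H^{(1)})^{(n)}=H^{(n+1)}$ for the ordinary derived series) yields $f_*(\cP^n \pi_1(V-\bigsqcup \Delta_i))\subset \cP^{n+1}\pi_1(V'-\bigsqcup_{i,j}D_i^j)$. Applying this to the hypothesis $\pi_1(S_j)\subset \cP^n \pi_1(V-\bigsqcup \Delta_i)$ finishes the positive case; the negative case follows by reversing orientation and the bipolar case from having both. I expect the main obstacle to be the careful bookkeeping of the geometric replacement and of the induced map on $\pi_1$; the derived-series functoriality is then essentially mechanical.
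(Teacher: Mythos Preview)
Your proof is correct and follows essentially the same approach as the paper: both reduce to $k=1$ by induction, perform the identical disk-replacement construction inside $\nu(\Delta_i)$, reuse the surfaces $S_j$, and then argue that the inclusion of the old disk complement into the new one sends everything into the first term of the relevant derived series, pushing the level up by one. The only cosmetic difference is that for the homotopy case the paper uses normal generation of $\pi_1(V-\bigsqcup\nu(\Delta_i))$ by meridians (coming from $\pi_1(V)=0$) together with the fact that each meridian becomes a commutator, whereas you run the homological argument uniformly; for that case your citation of Lemma~\ref{lemma:basic-properties-of-Z_p-positon} should strictly be the $\Z$-coefficient statement, which holds by the same excision argument once $H_1(V;\Z)=0$.
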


\begin{proof}
  We may assume $k=1$ by induction.  Let $V$ be an $n$-positon for
  $L=K_1\sqcup\cdots\sqcup K_m$ with slicing disks~$\Delta_i$.  We
  proceed similarly to the proof of
  Theorem~\ref{theorem:basic-construction-and-positivity}~(\ref{theorem-case:positivity-generalize-satellite}),
  except that we need a stronger conclusion.  We identify a tubular
  neighborhood $\nu(\Delta_i)$ with $\Delta_i\times D^2$ as usual.
  Note that $B(L)$ is obtained by replacing, for all $i$,
  $(\nu(K_i),K_i)$ with the standard Bing link $L_0$, which is a
  2-component link in $S^1\times D^2$.  Viewing $L_0$ as a link in
  $S^3$ via the standard embedding $S^1\times D^2 \subset \partial
  (D^2\times D^2)\cong S^3$, $L_0$ is a trivial link.  Consequently
  $L_0$ bounds disjoint slicing disks, say $D_{i,1}$ and $D_{i,2}$, in
  $D^2\times D^2\cong D^4$.  Replacing $(\nu(\Delta_i),\Delta_i)$ with
  $(D^2\times D^2, D_1\sqcup D_2)$ for each $i$, we see that $B(L)$ has
  slicing disks, say $D_\ell$ in~$V$.  Since $\pi_1(V)=0$,
  $\pi_1(V-\bigsqcup\nu(\Delta_i))$ is normally generated by the
  meridians $\mu_i$ of the~$K_i$.  Since the meridional curve $*\times
  S^1 \subset S^1\times D^2$ is homotopic to a commutator of meridians
  of the components of $L_0$ in $S^1 \times D^2-L_0$, it follows that
  the image of $\pi_1(V-\bigsqcup\nu(\Delta_i))$ lies in
  $\pi_1(V-\bigsqcup D_{\ell})^{(1)}$.  For the surfaces $S_j$ in
  Definition~\ref{definition:homotopy-n-positon}, since $\pi_1(S_j)
  \subset \pi_1(V-\bigsqcup\nu(\Delta_i))^{(n)}$, it follows that
  $\pi_1(S_j) \subset \pi_1(V-\bigsqcup D_\ell)^{(n+1)}$.  This shows that
  $V$ is an $(n+1)$-positon for~$B(L)$.

  For the $\Z_{(p)}$-homology analogue, we proceed similarly.  In this
  case, the argument using that $\pi_1(V-\bigsqcup\nu(\Delta_i))$ is
  normally generated by the meridians does not work.  Instead, we
  appeal to the following: first we claim that the image of
  $\pi_1(V-\bigsqcup\nu(\Delta_i))$ in $\pi_1(V-\bigsqcup D_\ell)$
  lies in the subgroup $\cP^1\pi_1(V-\bigsqcup D_\ell)$.  For, we have
  a commutative diagram
  \[
  \begin{diagram}
    \node{\textstyle\pi_1(V-\bigsqcup\nu(\Delta_i))} \arrow{e}\arrow{s}
    \node{\textstyle H_1(V-\bigsqcup\nu(\Delta_i);\Z_{(p)}) \cong
      (\Z_{(p)})^m} \arrow{s}
    \\
    \node{\textstyle\pi_1(V-\bigsqcup D_\ell)} \arrow{e,t}{\psi}
    \node{\textstyle H_1(V-\bigsqcup D_\ell;\Z_{(p)})}
  \end{diagram}
  \]
  with vertical maps induced by the inclusion.  Here the isomorphism
  in the upper right corner is obtained by
  Lemma~\ref{lemma:basic-properties-of-Z_p-positon}~(\ref{lemma-case:H1-of-slice-disk-complement-in-positon}).
  Since $(\Z_{(p)})^m$ is generated by meridians of $L$ which are
  null-homologous in the exterior of $B(L)$, the right vertical arrow
  is a zero map.  Since $\cP^{1}\pi_1(V-\bigsqcup D_\ell)$ is the
  kernel of $\psi$, the claim follows.  Now, from the claim we obtain
  \[\textstyle
  \pi_1(S_j)\subset \cP^n \pi_1(V-\bigsqcup \nu(\Delta_i)) \subset
  \cP^{n+1} \pi_1(V-\bigsqcup D_\ell)
  \]
  as desired.
\end{proof}

\begin{theorem}
  \label{theorem:iterated-bing-doubles-of-CH-knot}
  Suppose $K$ is a knot in $S^3$ which is $0$-bipolar and $1$-positive
  but not $\Z_{(p)}$-homology $1$-bipolar for some $p$.  Then $B_n(K)$
  is $n$-bipolar but not
  \textup{(}$\Z_{(p)}$-homology\textup{)} $2n$-bipolar.
\end{theorem}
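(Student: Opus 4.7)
My plan is to prove the two conclusions separately. The statement that $B_n(K)$ is $n$-bipolar follows immediately from Lemma~\ref{lemma:positivity-or-bing-double} applied to the $0$-bipolar knot $K$: taking the $n$-th iterated Bing double bumps the bipolar height by $n$, yielding $(n+0)=n$-bipolarity of $B_n(K)$.

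For the second conclusion, that $B_n(K)$ is not $\Z_{(p)}$-homology $2n$-bipolar, I argue by contradiction. Suppose $B_n(K)$ is $\Z_{(p)}$-homology $2n$-bipolar. Invoking Theorem~\ref{theorem:covering-link-iterated-bing-double} with parameter $k=1$ (so that $k+2n-1 = 2n$) yields that $K \# K^r$ is $\Z_{(p)}$-homology $1$-bipolar. The remaining, and crucial, task is to extract from this that $K$ itself is $\Z_{(p)}$-homology $1$-bipolar, contradicting the hypothesis. Since $K$ is $1$-positive, hence $\Z_{(p)}$-homology $1$-positive, this reduces to showing that $K$ is $\Z_{(p)}$-homology $1$-negative.

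For this extraction I plan to use the $d$-invariant obstruction of Theorem~\ref{theorem:d-invariant-obstruction} (applied to the mirror, to obtain the negative version). The $p^a$-fold cyclic branched cover $M_{K\# K^r}$ equals $M_K \# M_{K^r} = M_K \# M_K$, since orientation-reversal preserves the underlying branched cover. The $\Z_{(p)}$-homology $1$-bipolarity of $K\# K^r$ furnishes a metabolizer $H' \subset H_1(M_K)^2$ and a $\spin^c$ structure $s_0$ under which $d(M_K \# M_K, s_0 + \hat w) \ge 0$ for every $w \in H'$. Combining the additivity of $d$-invariants under connect sum with the $\Z/2$-swap involution exchanging the two copies of $M_K$, I aim to produce a metabolizer $H \subset H_1(M_K)$ with $d(M_K, s_0 + \hat z) \ge 0$ for all $z \in H$, certifying $\Z_{(p)}$-homology $1$-negativity of $K$. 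The main obstacle is precisely this metabolizer-transfer: neither the diagonal $\{(z,z)\}$ nor the antidiagonal $\{(z,-z)\}$ is automatically a metabolizer for the block-diagonal linking form on $H_1(M_K)^2$, so the symmetry argument must interact delicately with the linking form, in the spirit of the $d$-invariant arguments of Cochran--Horn~\cite{Cochran-Horn:2012-1}.
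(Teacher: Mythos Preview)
Your first paragraph and the opening of the second (reducing to the claim that $K\#K^r$ is $\Z_{(p)}$-homology $1$-bipolar via Theorem~\ref{theorem:covering-link-iterated-bing-double}) agree with the paper's proof. The gap is in your extraction step: Theorem~\ref{theorem:d-invariant-obstruction} supplies only a \emph{necessary} condition for $\Z_{(p)}$-homology $1$-positivity---it is an obstruction, not a certificate. Even if you managed to produce a metabolizer $H\subset H_1(M_K)$ with $d(M_K,s_0+\hat z)\ge 0$ for all $z\in H$, this would not show that $K$ is $\Z_{(p)}$-homology $1$-negative. So the argument cannot close along those lines, quite apart from the metabolizer-transfer difficulty you flag.

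The paper bypasses all of this with an elementary concordance trick. Since $K$ is $1$-positive (hence $\Z_{(p)}$-homology $1$-positive), its concordance inverse $-K^r$ is $\Z_{(p)}$-homology $1$-negative by Theorem~\ref{theorem:basic-construction-and-positivity}~(\ref{theorem-case:positivity-mirror-image}),~(\ref{theorem-case:positivity-changing-string-orientation}). Now $K$ is concordant to $(K\#K^r)\#(-K^r)$, a connected sum of two $\Z_{(p)}$-homology $1$-negative knots (the first because $K\#K^r$ is $1$-bipolar, the second as just noted), and is therefore itself $\Z_{(p)}$-homology $1$-negative by Theorem~\ref{theorem:basic-construction-and-positivity}~(\ref{theorem-case:positivity-split-union}),~(\ref{theorem-case:positivity-band-sum}),~(\ref{theorem-case:positivity-slice-link}). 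This contradicts the hypothesis. No $d$-invariants, branched covers, or metabolizer arguments are needed.
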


For example, we obtain topologically slice links which are $n$-bipolar
but not $2n$-bipolar, by applying
Theorem~\ref{theorem:iterated-bing-doubles-of-CH-knot} to the knots
presented by Cochran and Horn~\cite{Cochran-Horn:2012-1}; they
constructed topologically slice knots $K$ which are $0$-bipolar,
$1$-positive but not $1$-bipolar.  Indeed $K$ is shown not to be
$1$-negative by exhibiting that a certain $d$-invariant of the double
branched cover associated to a metabolizer element is negative in the
sense of \cite[Theorem~6.2]{Cochran-Harvey-Horn:2012-1} and
Theorem~\ref{theorem:d-invariant-obstruction}.  It follows that their
$K$ is not $\Z_{(2)}$-homology $1$-negative by
Theorem~\ref{theorem:d-invariant-obstruction}.

\begin{proof}[Proof of Theorem~\ref{theorem:iterated-bing-doubles-of-CH-knot}]
  By Lemma~\ref{lemma:positivity-or-bing-double}, $B_n(K)$ is
  $n$-bipolar since $K$ is $0$-bipolar.  If $B_n(K)$ is $2n$-bipolar
  then it is $\Z_{(p)}$-homology $2n$-bipolar for all primes $p$.  By
  Theorem~\ref{theorem:covering-link-iterated-bing-double} for $k=1$
  we see that $K\#K^r$ is $\Z_{(p)}$-homology $1$-bipolar.  Since $K$
  is $\Z_{(p)}$-homology $1$-positive, the concordance inverse $-K^r$
  of $K^r$ is $\Z_{(p)}$-homology $1$-negative by Theorem
  \ref{theorem:basic-construction-and-positivity}~(\ref{theorem-case:positivity-mirror-image}),~(\ref{theorem-case:positivity-changing-string-orientation}).
  We have that $K$ is concordant to $K \# K^r \# -K^r$.  By Theorem
  \ref{theorem:basic-construction-and-positivity}~(\ref{theorem-case:positivity-split-union}),~(\ref{theorem-case:positivity-band-sum}),
  this latter knot is $\Z_{(p)}$-homology $1$-negative since both $K\#
  K^r$ and $-K^r$ are $1$-negative.  Therefore so is $K$ by Theorem
  \ref{theorem:basic-construction-and-positivity}~(\ref{theorem-case:positivity-slice-link}).
  This contradicts the hypothesis.
\end{proof}

The examples obtained in
Theorem~\ref{theorem:iterated-bing-doubles-of-CH-knot} illustrate the
non-triviality of the $n$-bipolar filtration for links for
higher~$n$.  Note that there are still some limitations: the number of
components of our link $L=B_n(K)$ grows exponentially on $n$, and the
height $\BH(L) := \max\{h\mid L$ is $h$-bipolar$\}$ is not precisely
determined; we only know that $n \le \BH(B_n(K)) <2n$.  In the next
section we will give examples resolving these limitations.

We finish this section with a discussion of some covering link
calculus examples due to A. Levine~\cite{LevineA:2009-01}.  He
considered iterated Bing doubling operations associated to a binary
tree~$T$.  Namely, for a knot $K$, $B_T(K)$ is a link with components
indexed by the leaf nodes of $T$, which is defined inductively: for a
single node tree $T$, $B_T(K)=K$.  If $T'$ is obtained by attaching
two child nodes to a node $v$ of $T$, then $B_{T'}(K)$ is obtained
from $B_{T}(K)$ by Bing doubling the component associated to~$v$.  For
a link $L$, we denote by $\Wh_+(L)$ the link obtained by replacing
each component with its positive untwisted Whitehead double.  We
define the \emph{order} of a binary tree $T$ to be number of non-leaf
vertices i.e. one more than the number of trivalent vertices (since
the root vertex is bivalent), or equivalently, the number of leaf
vertices minus one.  We denote the order of $T$ by~$o(T)$.

\begin{theorem}
\label{theorem:mixed-whitehead-bing-double}
\begin{enumerate}
\item If $\tau(K)\ne 0$, then $\Wh_+(B_T(K))$ is not $o(T)$-bipolar.
\item For the Hopf link $H$, $\Wh_+(B_{T_1,T_2}(H))$ is not $(o(T_1) +
  o(T_2) + 1)$-bipolar.
\end{enumerate}
\end{theorem}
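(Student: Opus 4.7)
The plan is to combine the Covering Positon/Negaton Theorem (Theorem~\ref{theorem:covering-positon-theorem}) with the covering-link calculations for mixed Bing--Whitehead doubles due to Levine~\cite{LevineA:2009-01}. The main obstacle is the explicit covering-link reduction in the mixed Bing--Whitehead setting, which we invoke from Levine's work; the role of the present paper is then to convert Levine's topological-sliceness obstruction into a bipolar-height obstruction via Theorem~\ref{theorem:covering-positon-theorem}.

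For part (1), suppose for contradiction that $L = \Wh_+(B_T(K))$ is $o(T)$-bipolar. Following Levine, the plan is to exhibit a sequence of operations (C1) and (C2) applied to $L$, with exactly $o(T)$ applications of the branched-cover operation (C2) --- one for each non-leaf vertex of $T$, applied inductively by pruning pairs of sibling leaves from $T$ --- whose end result is a knot concordant to $K\# K^r$. At each inductive step one picks a non-leaf vertex $v$ of $T$ both of whose children are leaves, branch-covers (with $p=2$) along one of the two Whitehead-doubled components corresponding to these children, and then passes to a sublink eliminating redundant components; this effectively replaces the length-$2$ subtree at $v$ by a single leaf, reducing $o(T)$ by one. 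After $o(T)$ steps the tree is pruned to its root, and a final concordance identifies the result with $K\# K^r$ (as in Theorem~\ref{theorem:covering-link-iterated-bing-double}). By Theorem~\ref{theorem:covering-positon-theorem} this height-$o(T)$ covering knot is $\Z_{(2)}$-homology $0$-bipolar, so by Theorem~\ref{theorem:0-bipolar-obstructions} its $\tau$-invariant vanishes. But $\tau(K\# K^r) = 2\tau(K)\ne 0$, contradicting $\tau(K)\ne 0$.

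For part (2), the same pruning strategy applied to both $T_1$ and $T_2$ yields a sequence of $o(T_1) + o(T_2)$ branched covers reducing $\Wh_+(B_{T_1,T_2}(H))$ to a link concordant to $\Wh_+(H)$. One additional branched cover along one of the two Whitehead-doubled Hopf components --- whose Alexander polynomial is trivial, so that the $2$-fold cover of $S^3$ is again $S^3$ --- produces a two-component covering link with linking number $\pm1$, preserved from the Hopf link. If $\Wh_+(B_{T_1,T_2}(H))$ were $(o(T_1) + o(T_2) + 1)$-bipolar, then Theorem~\ref{theorem:covering-positon-theorem} would force this final two-component link to be $\Z_{(2)}$-homology $0$-bipolar. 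But no two-component link with nonzero linking number is $\Z_{(p)}$-homology $0$-positive: if $L_1\cup L_2$ bounds disjoint disks $\Delta_1,\Delta_2$ in a $\Z_{(p)}$-homology $0$-positon $V$, then by Lemma~\ref{lemma:basic-properties-of-Z_p-positon}\,(\ref{lemma-case:H1-of-slice-disk-complement-in-positon}) applied to the sublink $L_2$, $H_1(V\setminus\Delta_2;\Z_{(p)})\cong\Z_{(p)}$ is generated by the meridian of $\Delta_2$, and the class of $L_1$ there equals $\lk(L_1,L_2)$ times the generator; since $L_1=\partial\Delta_1\subset V\setminus\Delta_2$, this class vanishes, forcing $\lk(L_1,L_2)=0$. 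This contradiction completes the proof.
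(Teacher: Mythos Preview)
Your overall strategy---invoke Levine's covering-link reduction, apply the Covering Positon/Negaton Theorem~\ref{theorem:covering-positon-theorem}, and obstruct at level~$0$---is exactly the paper's approach. The problems lie in your identification of what Levine's covering links actually are.

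For part~(1), the claim that the height-$o(T)$ covering knot is $K\# K^r$ is incorrect. You are conflating this with Theorem~\ref{theorem:covering-link-iterated-bing-double}, which concerns $B_n(K)$ \emph{without} Whitehead doubling (and there the height is $2n-1$, not $o(T)$). In Levine's procedure for $\Wh_+(B_T(K))$, a pruning step does not simply delete a leaf: the double branched cover over one Whitehead-doubled member of a Bing pair replaces that pair by an iterated Whitehead/cable construction on the companion at the parent vertex. After $o(T)$ steps the resulting knot $J$ is a tower of such operations applied to~$K$, not $K\# K^r$, and Levine appeals to Hedden's computation of $\tau$ for Whitehead doubles and cables to conclude $\tau(J)>0$. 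The paper's proof simply cites this: ``Levine showed that $\Wh_+(B_T(K))$ has a knot $J$ with $\tau(J)>0$ as a covering link of height~$o(T)$,'' and then Theorems~\ref{theorem:covering-positon-theorem} and~\ref{theorem:0-bipolar-obstructions} finish.

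For part~(2), the same misidentification recurs---Levine's reduction does not produce $\Wh_+(H)$---and in addition your linking-number argument is unfounded: Whitehead doubling annihilates linking numbers, so the two components of $\Wh_+(H)$ have $\lk=0$ and nothing is ``preserved from the Hopf link.'' The paper again cites Levine: his height-$(o(T_1)+o(T_2)+1)$ covering link of $\Wh_+(B_{T_1,T_2}(H))$ is a \emph{knot} $J$ with $\tau(J)>0$, and the argument concludes exactly as in part~(1) via the $\tau$-obstruction.
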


\begin{proof}
  Levine showed that $\Wh_+(B_T(K))$ has a knot $J$ with $\tau(J)>0$
  as a covering link of height $o(T)$.  Similarly for
  $\Wh_+(B_{T_1,T_2}(H))$, where the relevant covering link has height
  $o(T_1) + o(T_2) + 1$.
\end{proof}

\section{Raising the bipolar height by one}
\label{section:main-examples}

The goal of this section is to prove the following:

\begin{theorem}
  \label{theorem:main-example}
  For any $m>1$ and $n\ge 0$, there exist topologically slice
  $m$-component links in $S^3$ which are $n$-bipolar but not
  $(n+1)$-bipolar.
\end{theorem}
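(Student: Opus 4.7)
The approach is to construct, for each $n\ge 0$, an explicit topologically slice link $L_n$ which is $n$-bipolar but not $(n+1)$-bipolar by iterating a height-raising geometric operation $\mathcal{C}$ (the operation $\mathcal{C}(-)$ announced in the introduction) on a suitable seed knot. For the seed, fix a knot $K$ from~\cite{Cochran-Horn:2012-1} that is topologically slice, $0$-bipolar, and $1$-positive, but is \emph{not} $\Z_{(2)}$-homology $1$-negative; the last failure is certified by a Heegaard Floer $d$-invariant on the double branched cover via Theorem~\ref{theorem:d-invariant-obstruction}. Define $L_n := \mathcal{C}^n(K)$, and arrange $\mathcal{C}$ so that each application adjoins only unknotted components, matching the unknotted-component remark after Theorem~\ref{theorem:main_theorem}.

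The crux is to show that $\mathcal{C}$ raises the $\Z_{(2)}$-homology bipolar height \emph{by exactly one}. For the ``at least one'' direction, given an $\ell$-positon $V$ for $L$ with slicing disks $\Delta_i$, I assemble an $(\ell+1)$-positon for $\mathcal{C}(L)$ by gluing into neighbourhoods of selected components a slicing-disk system for the pattern link defining $\mathcal{C}$, where the pattern is chosen to be slice in $D^4$. The derived-series input follows the template of Lemma~\ref{lemma:positivity-or-bing-double}: the meridians of the new components of $\mathcal{C}(L)$ lie in $\cP^1\pi_1$ of the new slice-disk exterior, so the generating surfaces $S_j$, previously lying in $\cP^\ell$, now sit in $\cP^{\ell+1}$. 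The negaton case is symmetric.

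The ``at most one'' direction is the main obstacle and uses covering link calculus. The goal is to exhibit a height-one $2$-covering link of $\mathcal{C}(L)$ which, after the bipolarity-preserving moves of Theorem~\ref{theorem:basic-construction-and-positivity}, equals $L$. The Covering Positon/Negaton Theorem~\ref{theorem:covering-positon-theorem} then forces that if $\mathcal{C}(L)$ is $\Z_{(2)}$-homology $(\ell+2)$-bipolar then $L$ is $\Z_{(2)}$-homology $(\ell+1)$-bipolar. The technical heart is designing $\mathcal{C}$ so that this covering-link symmetry holds on the nose: the newly adjoined unknotted component of $\mathcal{C}(L)$ must be linked with $L$ in such a way that a single $2$-fold branched cover along it, followed by extracting the right sublink, reproduces~$L$.

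Combining both halves with induction from the seed $K$ (which has $\Z_{(2)}$-homology bipolar height $0$), the link $L_n$ is $n$-bipolar but not $\Z_{(2)}$-homology $(n+1)$-bipolar, hence not $(n+1)$-bipolar. Topological sliceness is inherited at each step because $\mathcal{C}$ is a satellite-type operation with a topologically slice pattern. Finally, for arbitrary $m\ge 2$ components, take the split union of $L_n$ with the required number of unknots, which preserves bipolarity by Theorem~\ref{theorem:basic-construction-and-positivity}~(\ref{theorem-case:positivity-split-union}).
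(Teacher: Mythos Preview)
Your overall architecture matches the paper's: seed with a Cochran--Horn knot, iterate the height-raising operator $C$, extend to $m>2$ by split union with an unlink. The ``at least one'' half and the topological-sliceness inheritance are correct, as is the reduction to two components.

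The gap is in the ``at most one'' half. You assert that a height-one covering of $\mathcal{C}(L)$, after bipolarity-preserving moves from Theorem~\ref{theorem:basic-construction-and-positivity}, \emph{equals}~$L$. For the paper's operation $C$ this is false, and there is no evident alternative operation for which it holds. Concretely, for a 2-string link $\beta$ with unknotted components, the height-one $p$-covering link of $\widehat{C(\beta)}$ (obtained via a $p^a$-fold branched cover with $p^a\ge 5$; a double cover does not yield enough strands to extract the needed sublink) is the closure of $\beta\cdot\ell(\widetilde\beta{}^r,\widetilde\beta)$, where $\widetilde\beta$ is the plat closure of~$\beta$. This is $\widehat\beta$ contaminated by extra local knots $\widetilde\beta{}^r$ and $\widetilde\beta$ on the two strands, and no move from Theorem~\ref{theorem:basic-construction-and-positivity} removes them while preserving \emph{both} signs of definiteness. (The same phenomenon already occurs for Bing doubling: the height-one covering of $B(K)$ is $K\#K^r$, not~$K$.)

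The paper's remedy is to track not the bipolar height alone but the asymmetric property $\BH^p_+(n)$ of Definition~\ref{definition:bipolar-height-plus}: $\widehat\beta$ is $\Z_{(p)}$-homology $n$-bipolar, $(n+1)$-positive, and \emph{not} $(n+1)$-negative. Under this hypothesis, if $\widehat{C(\beta)}$ were $(n+2)$-negative, the covering link would be $(n+1)$-negative; since $\widetilde\beta$ and $\widetilde\beta{}^r$ are $(n+1)$-positive (they are band sums of components of~$\widehat\beta$), their string-link inverse is $(n+1)$-negative, and concatenating it cancels the contamination while preserving $(n+1)$-negativity. This forces $\widehat\beta$ to be $(n+1)$-negative, a contradiction. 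Thus the induction must propagate $\BH^p_+$, not just~$\BH^p$. Your seed knot does have $\BH^p_+(0)$ for $p=2$, so once you carry this refinement through the inductive step (this is the content of Theorem~\ref{theorem:bipolar-height-raising}), the argument goes through.
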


To construct links satisfying Theorem~\ref{theorem:main-example}, we will
introduce an operation that pushes a link into a deeper level of
the bipolar filtration.  To describe this behaviour of our operation, we
will use the following terminology:

\begin{definition}
  \label{definition:bipolar-height}
  The \emph{bipolar height} of a link $L$ is defined by
  \[
  \BH(L) := \max\{n \mid \text{$L$ is $n$-bipolar}\}.
  \]
  The \emph{$\Z_{(p)}$-bipolar height} is defined by
  \[
  \BH^p(L) := \max\{n \mid \text{$L$ is $\Z_{(p)}$-homology
    $n$-bipolar}\}.
  \]
  By convention, $\BH(L)=-1$ if $L$ is not $0$-bipolar.  Similarly
  for~c$\BH^p$.
\end{definition}

The proof of the following proposition is immediate from the
definitions, and is left to the reader to check.

\begin{proposition}\label{Proposition:bipolar-height-inequality}
  For any $L$ and for any $p$, we have $\BH(L) \le \BH^p(L)$.
\end{proposition}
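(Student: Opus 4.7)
The plan is to prove the stronger statement that every homotopy $n$-positon (resp.\ negaton) for a link $L$ is automatically a $\Z_{(p)}$-homology $n$-positon (resp.\ negaton) for any prime $p$. Granting this, a homotopy $n$-bipolar link is $\Z_{(p)}$-homology $n$-bipolar for every $p$, so the maximum $n$ for which $L$ is homotopy $n$-bipolar is bounded above by the maximum $n$ for which $L$ is $\Z_{(p)}$-homology $n$-bipolar, which is exactly the inequality $\BH(L)\le \BH^p(L)$.

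To compare the two notions of $n$-positon, I would check the three clauses of Definition~\ref{definition:homology-n-positon} one by one against Definition~\ref{definition:homotopy-n-positon}. Clause (1) is immediate: $\pi_1(V)\cong 0$ forces $H_1(V;\Z)=0$ and hence $H_1(V;\Z_{(p)})\cong H_1(V)\otimes \Z_{(p)}=0$. Clause (2) is literally the same. For clause (3), the intersection form on $H_2(V)$ being positive definite passes to the quotient $H_2(V)/\text{torsion}$, and the surfaces $S_j$ generating $H_2(V)$ also generate $H_2(V)/\text{torsion}$.

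The only nontrivial ingredient is the $\pi_1$-condition on the surfaces $S_j$, and for this it suffices to verify the containment
\[
G^{(n)} \subset \cP^n G
\]
for every group $G$ and every $n\ge 0$. This I would show by a short induction on $n$: the base case $n=0$ is equality, and assuming $G^{(n)}\subset \cP^n G$, we have
\[
G^{(n+1)}=[G^{(n)},G^{(n)}]\subset [\cP^n G,\cP^n G]\subset \cP^{n+1}G,
\]
where the final inclusion is immediate from the definition of $\cP^{n+1}G$ as the kernel of a map out of $\cP^n G/[\cP^n G,\cP^n G]$. Applying this with $G=\pi_1(V-\bigsqcup\Delta_i)$ converts $\pi_1(S_j)\subset G^{(n)}$ into $\pi_1(S_j)\subset \cP^n G$, completing the verification.

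There is no real obstacle here; the statement is a direct bookkeeping exercise once one records the elementary inclusion $G^{(n)}\subset \cP^n G$ and the flatness observation $H_1(V;\Z_{(p)})=H_1(V)\otimes\Z_{(p)}$. The only subtlety worth mentioning explicitly is that the derived subgroups one might naively expect to appear in the two definitions are not identical, and the correct direction of the inclusion is that the ordinary derived series sits inside the $\Z_{(p)}$-coefficient one (not the reverse); this is exactly what makes the homotopy condition \emph{stronger}, as required.
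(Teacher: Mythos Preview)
Your proof is correct and follows exactly the approach the paper has in mind: the paper leaves this proposition to the reader, having already remarked (just after Definition~\ref{definition:homology-n-positon}) that homotopy $n$-positive links are $R$-homology $n$-positive for any~$R$. You have simply written out the verification, including the one nontrivial point, namely the inclusion $G^{(n)}\subset \cP^n G$, which is precisely what the paper expects the reader to supply.
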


The following refined notion will be the precise setting for our height raising theorem.

\begin{definition}
  \label{definition:bipolar-height-plus}
  We say that $L$ has property $\BH^p_+(n)$ if $\BH^p(L)=n$ and $L$ is
  $\Z_{(p)}$-homology $(n+1)$-positive and we say that $L$ has
  property $\BH^p_-(n)$ if $\BH^p(L)=n$ and $L$ is $\Z_{(p)}$-homology
  $(n+1)$-negative.
\end{definition}

Our operation is best described in terms of string links.  We always
draw a string link horizontally; components of a string link are
oriented from left to right, and ordered from bottom to top.  We denote
the closure of a string link $\beta$ by $\widehat\beta$.

From now on, we assume knots are oriented, so that a knot can be
viewed as a 1-component string link and vice versa.  (Recall that a
1-component string link is determined by its closure.)

\begin{definition}
  \label{definition:C(-)}
  \leavevmode\Nopagebreak
  \begin{enumerate}
  \item For a knot or a 1-string link $K$, we define $C(K)$ to be the
    2-component string link illustrated in
    Figure~\ref{figure:knot-doubling}.  The two parallel strands
    passing through $K$ are untwisted.  Note that the closure
    $\widehat{C(K)}$ is the Bing double of~$K$.

    \begin{figure}[H]
      \labellist
      \hair 0mm
      \pinlabel {$C(K)=$} [l] at 0 40
      \pinlabel {$K$} at 103 41
      \endlabellist
      \includegraphics{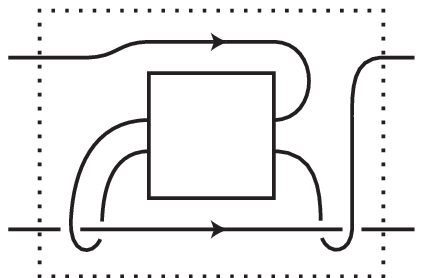}
      \caption{Bing doubling a knot $K$ as a 2-string link.}
      \label{figure:knot-doubling}
    \end{figure}

  \item For a 2-component string link $\beta$, we define $C(\beta)$ to
    be the 2-string link shown in Figure~\ref{figure:link-doubling}.
    (For now ignore the dashed arcs.)  As before, we take parallel
    strands passing through each of the strings of $\beta$ in an
    untwisted fashion.

    \begin{figure}[H]
      \labellist
      \hair 0mm
      \pinlabel {$C(\beta)=$} [l] at 0 58
      \pinlabel {$\beta$} at 97 65
      \endlabellist
      \includegraphics{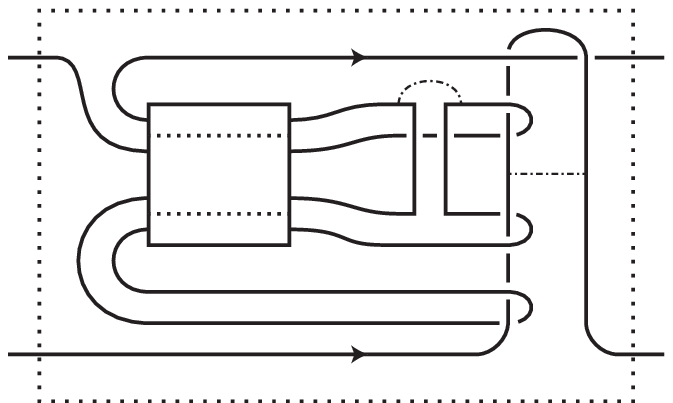}
      \caption{A doubling operation for 2-string links.}
      \label{figure:link-doubling}
    \end{figure}
  \end{enumerate}
\end{definition}

The key property of the operation $C(-)$ is the following height
raising result:

\begin{theorem}
  \label{theorem:bipolar-height-raising}
  Suppose $\beta$ is a string link with one or two components
  such that $\widehat\beta$ has property $\BH^p_+(n)$.  Then the link
  $\widehat{C(\beta)}$ has property $\BH^p_+(n+1)$.
\end{theorem}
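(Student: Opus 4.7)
The plan is to verify three statements about the link $\widehat{C(\beta)}$:
(i)~it is $\Z_{(p)}$-homology $(n+2)$-positive;
(ii)~it is $\Z_{(p)}$-homology $(n+1)$-negative; and
(iii)~it is not $\Z_{(p)}$-homology $(n+2)$-negative.
Combined with Proposition~\ref{Proposition:bipolar-height-inequality} and Definition~\ref{definition:bipolar-height-plus}, these three statements imply that $\widehat{C(\beta)}$ has property~$\BH^p_+(n+1)$.

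For~(i), I would argue in the style of the proof of Lemma~\ref{lemma:positivity-or-bing-double}. The operation $C(-)$ is a satellite-type construction whose pattern is essentially a Bing double: the parallel strands through $\beta$ are zero-framed copies of the strings of $\beta$, and the outer clasping is the standard Bing pattern, which bounds disjoint slicing disks in~$D^4$. Beginning with a $\Z_{(p)}$-homology $(n+1)$-positon $V$ for $\widehat\beta$, I would replace tubular neighborhoods of its slicing disks by the Bing pattern's slicing disks in $D^4\cong D^2\times D^2$. In the new slicing disk complement, the meridians of $\widehat\beta$ are represented by commutators of meridians of $\widehat{C(\beta)}$, so the image of $\pi_1$ of the generating surfaces drops one further step in the $\Z_{(p)}$-coefficient derived series, promoting $V$ to a $\Z_{(p)}$-homology $(n+2)$-positon.

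For~(ii), I would perform the dual construction, but gain only one level instead of two. Starting from a $\Z_{(p)}$-homology $n$-negaton $W$ for $\widehat\beta$, I would use the dashed arcs of Figure~\ref{figure:link-doubling} (which represent bands within $\partial W = S^3$) to realize an explicit cobordism from $\widehat{C(\beta)}$ to a link built directly out of $\widehat\beta$ together with null-homotopic circles. Capping these circles off and attaching the cobordism to $W$ produces a $4$-manifold whose intersection form remains negative definite, while the commutator-type effect of the bands on meridians lifts the derived-series depth by one, yielding a $\Z_{(p)}$-homology $(n+1)$-negaton.

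For~(iii), I would use the Covering Positon/Negaton Theorem~\ref{theorem:covering-positon-theorem}. For $\beta$ a $1$-string link this is classical: the closure $\widehat{C(K)} = B(K)$ admits $K\# K^r$ as a height-$1$ $p$-covering link, c.f.\ Theorem~\ref{theorem:covering-link-iterated-bing-double}. An analogous height-$1$ $p$-covering calculation for the $2$-string link case yields a link expressible via split union, band sum, mirror and orientation reversal from $\widehat\beta$. If $\widehat{C(\beta)}$ were $\Z_{(p)}$-homology $(n+2)$-negative, then Theorem~\ref{theorem:covering-positon-theorem} would force this covering link to be $\Z_{(p)}$-homology $(n+1)$-negative. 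Combined with the hypothesis that $\widehat\beta$ is $\Z_{(p)}$-homology $(n+1)$-positive, the concordance manipulation from the proof of Theorem~\ref{theorem:iterated-bing-doubles-of-CH-knot} (using the concordance $\widehat\beta \sim \widehat\beta\#\widehat\beta^r\#-\widehat\beta^r$ and parts~(\ref{theorem-case:positivity-mirror-image}), (\ref{theorem-case:positivity-changing-string-orientation}), (\ref{theorem-case:positivity-split-union}), (\ref{theorem-case:positivity-band-sum}), (\ref{theorem-case:positivity-slice-link}) of Theorem~\ref{theorem:basic-construction-and-positivity}) would make $\widehat\beta$ itself $\Z_{(p)}$-homology $(n+1)$-negative, contradicting $\BH^p(\widehat\beta) = n$.

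The main obstacle will be step~(ii): carrying out the explicit negaton construction via the dashed bands, and tracking exactly how the new meridians sit in the $\Z_{(p)}$-coefficient derived series of the new slicing disk complement so that the generating surfaces lift one derived-series step. A secondary technicality is the precise identification of the height-$1$ $p$-covering link in step~(iii) for the $2$-string link case, together with its concordance modification back to~$\widehat\beta$, which is needed in order to apply the hypothesis~$\BH^p_+(n)$.
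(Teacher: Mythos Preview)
Your decomposition into (i), (ii), (iii) is correct and matches the paper's structure, and your treatment of~(iii) is essentially the paper's argument: the specific height-one $p$-covering link the paper produces for the $2$-string case is $\big(\beta\cdot\ell(\widetilde\beta{}^r,\widetilde\beta)\big)\sphat$ (Theorem~\ref{theorem:height-one-covering-for-main-example}), and the concordance manipulation you sketch, using that $\widetilde\beta$ and $\widetilde\beta{}^r$ are $(n+1)$-positive and cancelling with $\ell(\widetilde\beta{}^r,\widetilde\beta)^{-1}$, is exactly what the paper does.

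Where you go astray is in weighting the difficulty. Steps~(i) and~(ii) are \emph{the same argument} applied to different inputs, and the paper handles them uniformly via Lemma~\ref{lemma:bipolarity-of-C-doubling}: for a $2$-component $\beta$, the closure $\widehat{C(\beta)}$ is obtained from the Bing double $B(\widehat\beta)$ by band sum of components (the dashed arcs indicate the cuts that undo these band sums, recovering $B(\widehat\beta)$, not ``$\widehat\beta$ together with null-homotopic circles'' as you wrote). Hence Lemma~\ref{lemma:positivity-or-bing-double} plus Theorem~\ref{theorem:basic-construction-and-positivity}~(\ref{theorem-case:positivity-band-sum}) give: $\widehat\beta$ $\Z_{(p)}$-homology $k$-positive/negative $\Rightarrow$ $\widehat{C(\beta)}$ $\Z_{(p)}$-homology $(k+1)$-positive/negative. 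Applying this with $k=n+1$ to the positon and $k=n$ to the negaton yields (i) and (ii) immediately. Your separate band-cobordism construction for~(ii) is unnecessary, and your description of what the dashed arcs produce is incorrect. The genuine work lies entirely in~(iii), specifically the covering-link identification of Theorem~\ref{theorem:height-one-covering-for-main-example}; that is the ``secondary technicality'' you mention, and it is in fact the heart of the proof.
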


We remark that the $\BH^p_-$ analogue of
Theorem~\ref{theorem:bipolar-height-raising} holds by taking mirror
images.

As the first step of the proof of
Theorem~\ref{theorem:bipolar-height-raising}, we make a few useful
observations as a lemma.

\begin{lemma}
  \label{lemma:bipolarity-of-C-doubling}
  If $\widehat\beta$ is $k$-positive \textup{(}resp.\ negative,
  bipolar\textup{)}, then $\widehat{C(\beta)}$ is $(k+1)$-positive
  \textup{(}resp.\ negative, bipolar\textup{)}.  If $\widehat\beta$ is
  $\Z_{(p)}$-homology $k$-positive \textup{(}resp.\ negative,
  bipolar\textup{)}, then $\widehat{C(\beta)}$ is $\Z_{(p)}$-homology
  $(k+1)$-positive \textup{(}resp.\ negative, bipolar\textup{)}.  If
  $\widehat\beta$ is topologically \textup{(}resp.\ smoothly\textup{)}
  slice, then so is~$\widehat{C(\beta)}$.
\end{lemma}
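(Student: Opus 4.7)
The plan is to mimic the proof of Lemma~\ref{lemma:positivity-or-bing-double} almost verbatim, with the essential geometric input being that a meridian of each component of $\widehat{C(\beta)}$ is homotopic, in the complement of the new link, to a commutator of meridians of $\widehat\beta$. This single commutator drops everything one level further into the derived series, accounting for the $k \mapsto k+1$ in all three statements. In the one-component case $\beta = K$ we have $\widehat{C(K)} = B(K)$ literally by construction, so the $(k+1)$-positivity, the $\Z_{(p)}$-homology analogue, and sliceness all follow immediately from Lemma~\ref{lemma:positivity-or-bing-double} applied with its $k$ equal to $1$, together with the classical fact that the Bing double of a slice knot is slice. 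The real work is therefore in the 2-string-link case.

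For that case, I would start with a $k$-positon $V$ for $\widehat\beta$ with slicing disks $\Delta_1, \Delta_2$ and generating surfaces $S_j \subset V - (\Delta_1 \sqcup \Delta_2)$. Identifying each tubular neighborhood $\nu(\Delta_i)$ with $\Delta_i \times D^2 \cong D^2 \times D^2$, I would interpret the dashed arcs in Figure~\ref{figure:link-doubling} as specifying an explicit disjoint union of smoothly, properly embedded disks $D_\ell$ in $D^2 \times D^2$ whose boundary is the piece of the $C(\beta)$-pattern living in $S^1 \times D^2 \subset \partial(D^2 \times D^2)$. Cutting out $\nu(\Delta_1) \sqcup \nu(\Delta_2)$ from $V$ and regluing $(D^2\times D^2, \bigsqcup D_\ell)$ in its place produces the same 4-manifold $V$ but equipped with new slicing disks $D_\ell$ for $\widehat{C(\beta)}$. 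The $S_j$ lie in the new complement $V - \bigsqcup D_\ell$, are still disjoint from the slicing disks, and still generate $H_2(V)/\text{torsion}$ with the correct definiteness.

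The main step is then the fundamental-group condition. Because $\pi_1(S_j) \subset \pi_1(V - \bigsqcup \Delta_i)^{(k)}$, it is enough to check that the inclusion-induced map sends $\pi_1(V - \bigsqcup \nu(\Delta_i)) \to \pi_1(V - \bigsqcup D_\ell)^{(1)}$. Geometrically, this is the assertion that a meridian of each $\Delta_i$, viewed inside the neighborhood $D^2 \times D^2$ now containing the pattern, is isotopic to a commutator of meridians of the newly-introduced components --- the precise analogue of the $\mu = [\mu_1,\mu_2]$ identity for Bing doubling, and the property for which $C(-)$ was built. Substituting, $\pi_1(S_j) \subset \pi_1(V - \bigsqcup D_\ell)^{(k+1)}$, so $V$ with the new slice disks is a $(k+1)$-positon for $\widehat{C(\beta)}$. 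The $\Z_{(p)}$-homology version uses the same cut-and-paste, with the series $\cP^{\bullet}$ in place of the ordinary derived series; the inclusion-image argument goes through exactly as in Lemma~\ref{lemma:positivity-or-bing-double}, via the commutative diagram of $\Z_{(p)}$-abelianizations showing that meridians of $\widehat\beta$ die in $H_1(V - \bigsqcup D_\ell; \Z_{(p)})$, so the image lands in $\cP^1\pi_1(V - \bigsqcup D_\ell)$ and hence $\pi_1(S_j) \subset \cP^{k+1}\pi_1(V - \bigsqcup D_\ell)$.

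The sliceness statement is the specialization $V = D^4$: given smoothly (resp.\ locally-flatly) embedded slicing disks for $\widehat\beta$ in $D^4$, the same cut-and-reglue construction produces smoothly (resp.\ locally-flatly) embedded slicing disks for $\widehat{C(\beta)}$ in $D^4$. The step I expect to require the most care is the one I glossed in the second paragraph: unambiguously reading off Figure~\ref{figure:link-doubling} in the 2-string-link case that the dashed arcs really do assemble into a disjoint family of properly embedded slice disks $D_\ell \subset D^2 \times D^2$, and that the corresponding meridional relation really does realize each meridian of $\widehat\beta$ as a commutator of meridians of components of the pattern. Once that local geometric verification is secured, everything else is a direct transcription of the Bing-doubling proof.
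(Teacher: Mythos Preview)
Your approach differs from the paper's, and the step you yourself flag as delicate is where the argument breaks down as written. You propose to find slicing disks $D_\ell$ for $\widehat{C(\beta)}$ inside the individual neighborhoods $\nu(\Delta_i) \cong D^2 \times D^2$, treating the $C(\beta)$-pattern as a local satellite pattern living in a solid torus around each component of $\widehat\beta$. But this is not what happens in the 2-string case: each of the two components of $\widehat{C(\beta)}$ passes through \emph{both} solid tori $\nu(K_1)$ and $\nu(K_2)$, so neither component bounds a disk confined to a single~$\nu(\Delta_i)$. The dashed arcs in Figure~\ref{figure:link-doubling} are not marking slice disks in one $D^2\times D^2$; they mark bands joining the two tubular neighborhoods.

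The paper sidesteps this by observing directly that $\widehat{C(\beta)}$ is obtained from the Bing double $B(\widehat\beta)$ (a 4-component link) by band-summing two pairs of components; the dashed arcs indicate where to cut to reverse these bands. The lemma then reduces to two already-proven facts: Lemma~\ref{lemma:positivity-or-bing-double} gives that $B(\widehat\beta)$ is $(k+1)$-positive (resp.\ $\Z_{(p)}$-homology $(k+1)$-positive), and Theorem~\ref{theorem:basic-construction-and-positivity}~(\ref{theorem-case:positivity-band-sum}) shows band-summing preserves this. Sliceness follows the same way. This factorization is shorter and avoids having to verify the commutator relation directly for the $C$-pattern. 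If you repair your construction correctly --- placing Bing-pattern disks in each $\nu(\Delta_i)$ separately (four disks total) and then attaching bands between them in a collar of $\partial V$ --- you recover exactly the paper's argument.
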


\begin{proof}
  For the case that $\beta$ is a 1-component string link, this is
  Lemma \ref{lemma:positivity-or-bing-double}, plus the observation
  that the Bing double of a slice link is slice, which follows from
  the argument in the proof of
  Theorem~\ref{theorem:basic-construction-and-positivity}~(\ref{theorem-case:positivity-generalize-satellite}).

  For the 2-component string link case, observe that
  $\widehat{C(\beta)}$ is obtained from the Bing double
  $B(\widehat\beta)$ by band sum of components: two pairs of
  components are joined.  The dashed arcs in
  Figure~\ref{figure:link-doubling} indicate where to cut to reverse
  these band sums.  From this the conclusion follows by
  Lemma~\ref{lemma:positivity-or-bing-double} and
  Theorem~\ref{theorem:basic-construction-and-positivity}~(\ref{theorem-case:positivity-band-sum}).
  The sliceness claim is shown similarly: the Bing double of a slice
  link is slice, as is the internal band sum of a slice link.
\end{proof}

Due to Lemma~\ref{lemma:bipolarity-of-C-doubling}, in order to prove Theorem~\ref{theorem:bipolar-height-raising} it remains to
show, given $\beta$ for which $\widehat{\beta}$ has property $\BH_+^p(n)$, that
$\widehat{C(\beta)}$ is not $\Z_{(p)}$-homology $(n+2)$-negative.  For this purpose we
will use covering link calculus.

In what follows we use the following notation.  For two knots $J_1$
and $J_2$, let $\ell(J_1, J_2)$ be the 2-component string link which
is the split union of string link representations of $J_1$ and $J_2$,
as the first and second components respectively.  For a 2-component
string link $\beta$, we denote by $\widetilde\beta$ the plat closure
of $\beta$; see Figure~\ref{figure:plat-closure}, which also indicates
the orientation which we give $\widetilde{\beta}$.  We say that
\emph{$\beta$ has unknotted components} if each strand of $\beta$ is
unknotted.

\begin{figure}[H]
  \labellist
  \hair 0mm
  \pinlabel {$\widetilde\beta=$} [l] at 0 33
  \pinlabel {$\beta$} at 80 27
  \pinlabel {$=$}  at 151 32
  \pinlabel {$\beta$} at 194 31
  \endlabellist
  \includegraphics{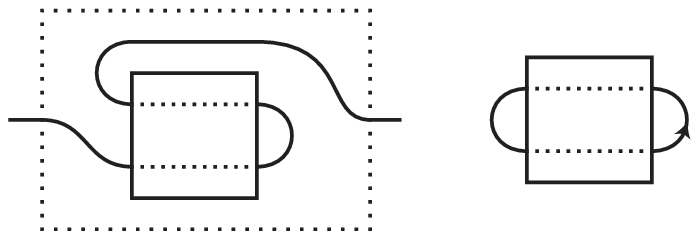}
  \caption{The plat closure of a 2-string links.}
  \label{figure:plat-closure}
\end{figure}

\def\sphat{\widehat{\hphantom{x}}}
\def\sphatt{\widehat{\hphantom{x}\vphantom{\sum}}}
\def\sptilde{\widetilde{\hphantom{x}}}

In what follows, $\cdot$ denotes the product of string links, given by
concatenation.

\begin{theorem}
  \label{theorem:height-one-covering-for-main-example}
  For any 2-component string link $\beta$ with unknotted components,
  the link $\widehat{C(\beta)}$ has, as a $p$-covering link of height
  one, the link
  \hbox{$\big(\beta\cdot\ell(\widetilde\beta{}^r,\widetilde\beta)\big)\sphat$}
  in $S^3$.
\end{theorem}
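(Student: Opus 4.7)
The plan is to realize $\big(\beta\cdot\ell(\widetilde\beta^r,\widetilde\beta)\big)\sphat$ as a height-one $p$-covering link by performing a single $2$-fold cyclic branched cover (a (C2) operation) along an unknotted component of $\widehat{C(\beta)}$, followed by a (C1) sublink operation to discard the branching component. In Figure~\ref{figure:link-doubling}, the doubling construction naturally introduces a Bing-style clasping loop $K$; since $\beta$ has unknotted components, $K$ is itself unknotted, has linking number zero with the other component $J$ of $\widehat{C(\beta)}$, and therefore has vanishing $\mathbb{Q}/\mathbb{Z}$ self-linking. The $2$-fold cyclic branched cover of $S^3$ along $K$ is thus well-defined and yields $S^3$ again, so the entire covering link construction takes place in $S^3$.

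Under this cover, $J$ lifts to a disjoint union of two circles since $\mathrm{lk}(K,J)=0$, and the main geometric step is to identify this two-component lift with $\big(\beta\cdot\ell(\widetilde\beta^r,\widetilde\beta)\big)\sphat$. The idea is that the deck involution doubles the region of the string link $C(\beta)$ containing $\beta$, so that one fundamental domain contains a copy of $\beta$ (giving the left factor $\beta$ of the product), while the image under the involution contributes the plat-closure structure that appears where the parallel strands turn around through $\beta$, yielding the split factor $\ell(\widetilde\beta^r,\widetilde\beta)$ on the right. After the (C1) operation that deletes the lift of $K$, the resulting link in $S^3$ is precisely $\big(\beta\cdot\ell(\widetilde\beta^r,\widetilde\beta)\big)\sphat$. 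Only one (C2) operation has been used, so the height equals one.

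The principal obstacle will be the explicit geometric bookkeeping required to identify the lift. To carry this out I would isotope $\widehat{C(\beta)}$ into a position in which $K$ is a standardly embedded unknot and the remainder of $\widehat{C(\beta)}$ carries an evident $\mathbb{Z}/2$ symmetry with axis $K$, choose a fundamental domain for the induced involution on the cover, then trace each strand of $J$ through one complete traversal of the cover. Careful orientation tracking along the involution---which reverses orientation on one of the two traverses through the clasp region---is what produces the mirror reversal $\widetilde\beta^r$ in the first component of $\ell(\widetilde\beta^r,\widetilde\beta)$, as opposed to the unreversed $\widetilde\beta$ in the second. The hypothesis that $\beta$ has unknotted components is precisely what guarantees that $K$ is unknotted, which is in turn what makes the ambient cover equal to $S^3$ and the covering link a link in~$S^3$ as asserted.
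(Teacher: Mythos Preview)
There is a genuine gap. First, a $2$-fold branched cover is a (C2) move only when $p=2$; the theorem asserts a $p$-covering link for the fixed prime $p$, so for odd $p$ your construction is not even an admissible covering move.

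More seriously, even for $p=2$ the double cover cannot produce the asserted link. The target $\big(\beta\cdot\ell(\widetilde\beta{}^r,\widetilde\beta)\big)\sphat$ visibly contains \emph{three} copies of the $\beta$-tangle: one from the factor $\beta$ and one each from $\widetilde\beta{}^r$ and $\widetilde\beta$. A $2$-fold cover of $\widehat{C(\beta)}$ has exactly two fundamental domains and hence only two $\beta$-boxes, so your heuristic ``one domain gives $\beta$, the other gives $\ell(\widetilde\beta{}^r,\widetilde\beta)$'' cannot be correct; in the $2$-fold cover the two lifts of $J$ wrap around and interact through both $\beta$-boxes, not through a split pair of plat closures. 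The paper's argument instead takes a $p^a$-fold cover with $p^a\ge 5$. In that cover the non-branching component lifts to $p^a$ circles, and the (C1) step does much more than delete the branch set: it keeps only two carefully chosen adjacent lifts and discards the remaining $p^a-2$. Those two lifts together pass through five consecutive $\beta$-boxes; the condition $p^a\ge 5$ is exactly what prevents this local picture from wrapping around and self-interacting. Finally, the unknotted-components hypothesis is used a second time, beyond making the branch component an unknot: in the two outermost of the five $\beta$-boxes only a single strand of $\beta$ survives after the (C1) deletion, and it is here that unknottedness lets those boxes be erased, leaving precisely the three $\beta$-tangles of the target.
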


See Figure~\ref{figure:closure-doubled-link} for the initial link
$\widehat{C(\beta)}$, and
Figure~\ref{figure:covering-doubled-link} for its height one covering
link $\big(\beta\cdot\ell(\widetilde\beta{}^r,\widetilde\beta)\big)\sphat$.

\begin{figure}[H]
  \labellist
  \hair 0mm
  \pinlabel {$\beta$} at 40 60
  \endlabellist
  \[
  \widehat{C(\beta)} = \quad
  \vcenter{\hbox{\includegraphics{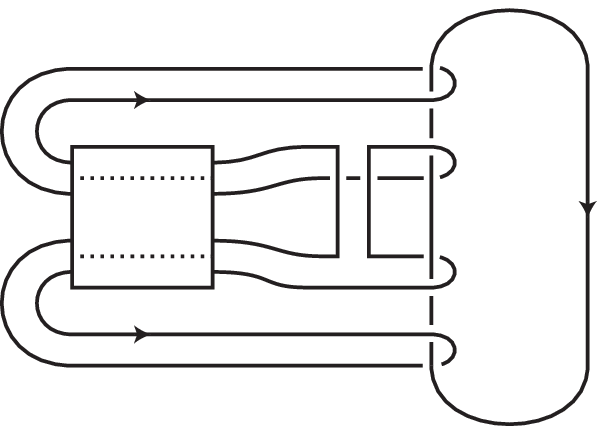}}}
  \]
  \caption{The closure of the 2-string link~$C(\beta)$.}
  \label{figure:closure-doubled-link}
\end{figure}

\begin{figure}[H]
  \labellist
  \hair 0mm
  \pinlabel {$\beta$} at 46 67
  \pinlabel {$\beta$} at 102 45
  \pinlabel {$\beta$} at 102 90
  \pinlabel {$=$} at 185 67
  \pinlabel {$\beta$} at 235 67
  \pinlabel {$\beta$} at 294 45
  \pinlabel {$\beta$} at 294 90
  \endlabellist
  \[
  \vcenter{\hbox{\includegraphics{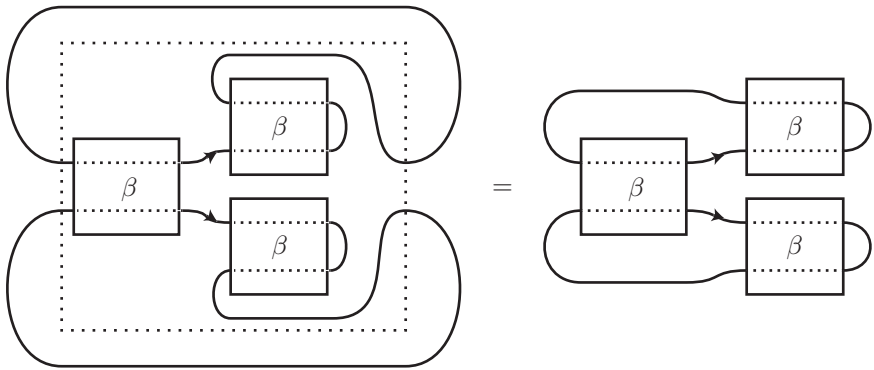}}}
  \]
  \caption{$\widehat{\beta\cdot\ell(\widetilde\beta{}^r,\widetilde\beta)}$
    is a height one covering link of $\widehat{C(\beta)}$.}
  \label{figure:covering-doubled-link}
\end{figure}

\begin{proof}
  Our proof mostly consists of pictures.
  While the statement of
  Theorem~\ref{theorem:height-one-covering-for-main-example} is
  independent of the choice of orientation of links due to our
  convention (see the remark after
  Definition~\ref{definition:covering-links}), we will work in the
  proof with oriented knots and (string) links in order to show
  clearly how the orientations of involved blocks match.  Indeed we
  will show that as oriented links the link in
  Figure~\ref{figure:covering-doubled-link} with the first (bottom)
  component's orientation reversed is a covering link of the link in
  Figure~\ref{figure:closure-doubled-link}.

  Choose $a$ such that $p^a \ge 5$, and take the $p^a$-fold cyclic
  cover branched along the right hand component of the link
  $\widehat{C(\beta)}$ illustrated in
  Figure~\ref{figure:closure-doubled-link}.  The covering space is
  obtained by cutting the 3-sphere along a disk whose boundary is the
  right hand component, and glueing $p^a$ copies of the result.  The
  covering link (with the pre-image of the branching component
  forgotten) in the branched cover is shown in
  Figure~\ref{figure:cyclic-branched-covering}.  Note that the ambient
  space is again $S^3$ since the branching component is an unknot.

  \begin{figure}[H]
    \labellist
    \hair 0mm
    \pinlabel {$\beta$} at 40 268
    \pinlabel {$\beta$} at 148 160
    \pinlabel {$\beta$} at 256 52
    \pinlabel {$*$} at 246 213
    \pinlabel {$*$} at 263 133
    \endlabellist
    \includegraphics[scale=.9]{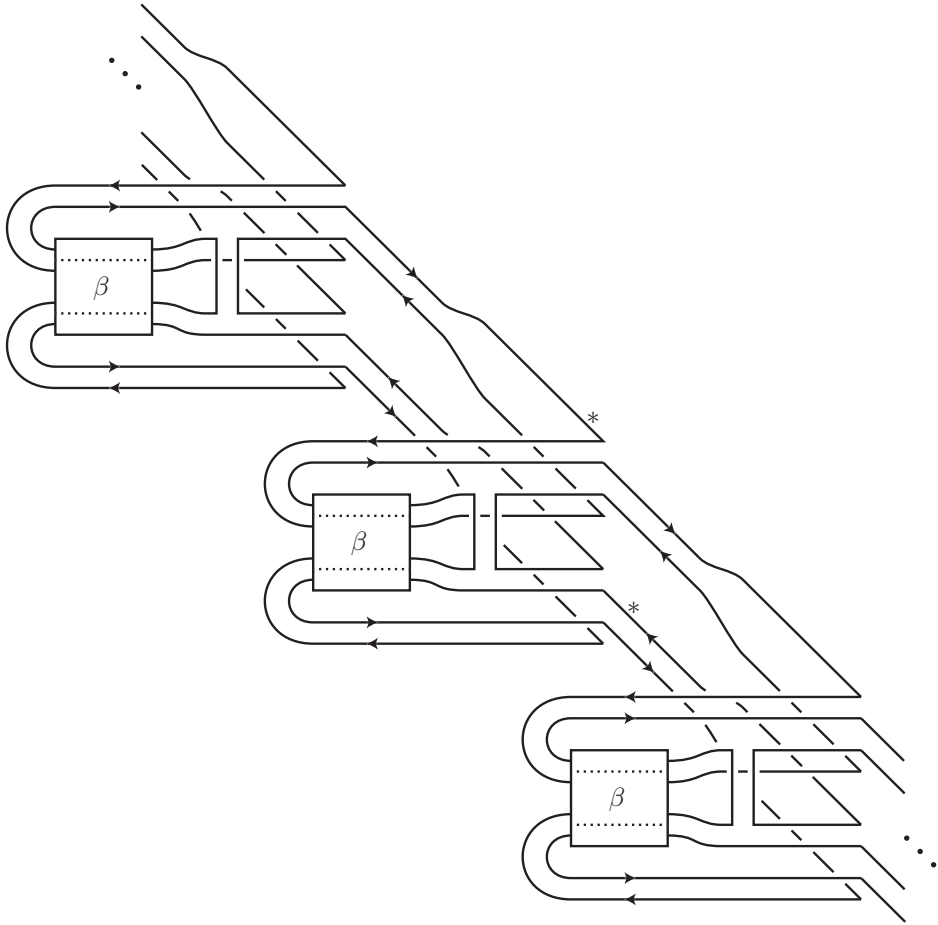}
    \caption{The cyclic branched covering.}
    \label{figure:cyclic-branched-covering}
  \end{figure}

  Forgetting all components except two marked by $*$ in
  Figure~\ref{figure:cyclic-branched-covering}, we obtain the 2-component
  link illustrated in Figure~\ref{figure:two-component-covering-link},
  since $p^a \ge 5$.  Since $\beta$ has unknotted components, the link
  in Figure~\ref{figure:two-component-covering-link} with the bottom
  component's orientation reversed is isotopic to the link
  $\big(\beta\cdot\ell(\widetilde\beta{}^r,\widetilde\beta)\big)\sphat$
  which is illustrated in Figure~\ref{figure:covering-doubled-link}.
  This completes the proof.
\end{proof}

  \begin{figure}[t]
    \labellist
    \hair 0mm
    \pinlabel {$\beta$} at 20 294
    \pinlabel {$\beta$} at 90 230
    \pinlabel {$\beta$} at 160 154
    \pinlabel {$\beta$} at 230 89
    \pinlabel {$\beta$} at 300 13
    \endlabellist
    \includegraphics{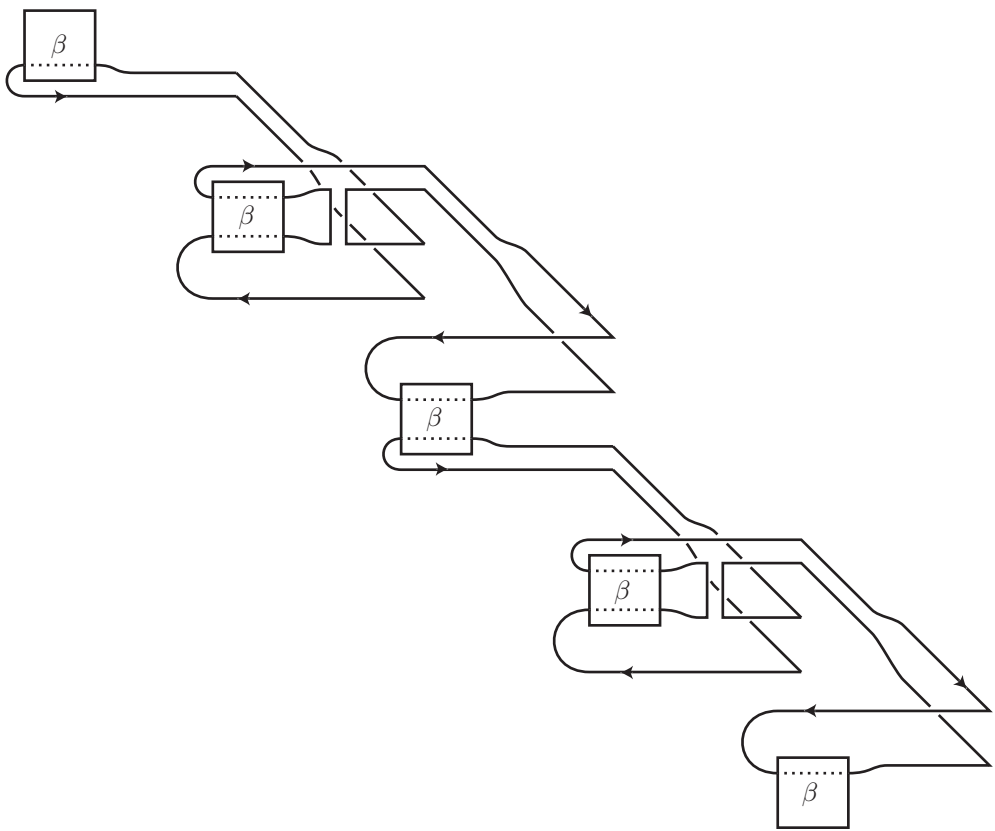}
    \caption{A two-component sublink of Figure~\ref{figure:cyclic-branched-covering}.}
    \label{figure:two-component-covering-link}
  \end{figure}

  Now we are ready to give a proof of
  Theorem~\ref{theorem:bipolar-height-raising}.

\begin{proof}[Proof of Theorem~\ref{theorem:bipolar-height-raising}]
  Suppose $\beta$  is such that $\widehat{\beta}$ has property $\BH^p_+(n)$.  We will show that $\widehat{C(\beta)}$ has property
  $\BH^p_+(n+1)$.  By Lemma~\ref{lemma:bipolarity-of-C-doubling}, it suffices to show that
  $\widehat{C(\beta)}$ is not $\Z_{(p)}$-homology $(n+2)$-bipolar.

  When $\beta$ is a $1$-component string link, $\widehat{C(\beta)} =
  B_1(\widehat{\beta})$; that is, the operator $C$ corresponds to Bing
  doubling.  So, if $n=0$, then
  Theorem~\ref{theorem:iterated-bing-doubles-of-CH-knot} (with $n=1$
  in the notation of that theorem) says that $\widehat{C(\beta)}$ is
  not $\Z_{(p)}$-homology $2$-bipolar.  For arbitrary $n$, observe
  that the proof of
  Theorem~\ref{theorem:iterated-bing-doubles-of-CH-knot} also works if
  we shift all bipolarity heights by a constant.

  Now suppose that $\beta$ is a 2-component string link.  We will show
  that $\widehat{C(\beta)}$ is not $\Z_{(p)}$-homology
  $(n+2)$-negative.  So let us suppose, for a contradiction, that
  $\widehat{C(\beta)}$ is $\Z_{(p)}$-homology $(n+2)$-negative.

  By Theorem~\ref{theorem:height-one-covering-for-main-example} and
  Theorem~\ref{theorem:covering-positon-theorem}, $(\beta\cdot
  \ell(\tilde\beta^r,\tilde\beta))\sphat$ is $\Z_{(p)}$-homology
  $(n+1)$-negative.  Since $\widehat\beta$ is $\Z_{(p)}$-homology
  $(n+1)$-positive and $\tilde\beta$ and $\tilde\beta^r$ are obtained
  from $\widehat\beta$ and $\widehat\beta^r$ by band sum of
  components, $\tilde\beta$ and $\tilde\beta^r$ are
  $\Z_{(p)}$-homology $(n+1)$-positive by
  Theorem~\ref{theorem:basic-construction-and-positivity}~(\ref{theorem-case:positivity-band-sum}).
  It follows that $(\ell(\tilde\beta^r,\tilde\beta)^{-1})\sphat$ is
  $\Z_{(p)}$-homology $(n+1)$-negative by
  Theorem~\ref{theorem:basic-construction-and-positivity}~(\ref{theorem-case:positivity-mirror-image}),~(\ref{theorem-case:positivity-changing-string-orientation})
  and~(\ref{theorem-case:positivity-split-union}), where for a string
  link $\gamma$, we denote the concordance inverse obtained by mirror
  image and reversing string orientation by $\gamma^{-1}$.  Then
  $\widehat\beta$, which is concordant to $(\beta\cdot
  \ell(\tilde\beta^r,\tilde\beta) \cdot
  \ell(\tilde\beta^r,\tilde\beta)^{-1})\sphat$, is $\Z_{(p)}$-homology
  $(n+1)$-negative by
  Theorem~\ref{theorem:basic-construction-and-positivity}~(\ref{theorem-case:positivity-band-sum})
  and~(\ref{theorem-case:positivity-slice-link}).  This contradicts
  the hypothesis that $\widehat\beta$ has property $\BH^p_+(n)$.  Therefore
  $\widehat{C(\beta)}$ is not $\Z_{(p)}$-homology $(n+2)$-negative, as
  desired.
\end{proof}

We are now ready to prove the promised result
(Theorem~\ref{theorem:main-example}) by giving explicit examples.  For
a knot $K$, define a sequence $\{C_n(K)\}$ inductively by $C_0(K) :=
K$, $C_{n+1}(K) := C(C_{n}(K))$ for $n >0$.  Note that $C_n(K)$ is a
2-component string link for $n>0$.

\begin{corollary}
  \label{corollary:main-example-detailed}
  Suppose $K$ is a knot which is topologically slice and $0$-bipolar
  and $\Z_{(p)}$-homology $1$-positive but not $\Z_{(p)}$-homology
  1-bipolar for some~$p$.  Then the 2-component link
  $\widehat{C_n(K)}$ is topologically slice and $n$-bipolar but not
  $(n+1)$-bipolar.
\end{corollary}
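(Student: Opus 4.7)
The plan is a direct induction on $n \geq 1$, in which I simultaneously establish that $\widehat{C_n(K)}$ is topologically slice, $n$-bipolar, and has property $\BH^p_+(n)$. The corollary is essentially a packaging of Theorem~\ref{theorem:bipolar-height-raising} together with Lemma~\ref{lemma:bipolarity-of-C-doubling}; once the initial hypothesis is checked, iteration does all the work.

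First I would verify that the hypothesis of the height-raising theorem applies to $\beta = K$, viewed as a $1$-component string link. The assumption that $K$ is $0$-bipolar gives $\BH(K) \geq 0$, so $\BH^p(K) \geq 0$ by Proposition~\ref{Proposition:bipolar-height-inequality}. The assumption that $K$ is not $\Z_{(p)}$-homology $1$-bipolar forces $\BH^p(K) \leq 0$, hence $\BH^p(K) = 0$. Combined with the hypothesis that $K$ is $\Z_{(p)}$-homology $1$-positive, this shows that $K$ has property $\BH^p_+(0)$.

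The base case $n=1$ now follows: Theorem~\ref{theorem:bipolar-height-raising} applied to $\beta = K$ yields that $\widehat{C_1(K)} = \widehat{C(K)}$ has property $\BH^p_+(1)$, while Lemma~\ref{lemma:bipolarity-of-C-doubling} provides that $\widehat{C(K)}$ is $1$-bipolar and topologically slice. For the inductive step, assuming that $\widehat{C_n(K)}$ has property $\BH^p_+(n)$, is $n$-bipolar, and is topologically slice, I apply the same two results now to the $2$-component string link $\beta = C_n(K)$, concluding that $\widehat{C_{n+1}(K)} = \widehat{C(C_n(K))}$ has property $\BH^p_+(n+1)$, is $(n+1)$-bipolar, and is topologically slice.

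Finally, since $\BH^p(\widehat{C_n(K)}) = n$, the link $\widehat{C_n(K)}$ is not $\Z_{(p)}$-homology $(n+1)$-bipolar; and because an $(n+1)$-bipolar link is a fortiori $\Z_{(p)}$-homology $(n+1)$-bipolar (by Proposition~\ref{Proposition:bipolar-height-inequality}), it follows that $\widehat{C_n(K)}$ is not $(n+1)$-bipolar, as required. There is no genuine obstacle here: the substantive content sits entirely in Theorem~\ref{theorem:bipolar-height-raising}, whose proof in turn relies on the Covering Positon/Negaton theorem applied to the height-one covering link described in Theorem~\ref{theorem:height-one-covering-for-main-example}. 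The corollary is simply its iterated consequence.
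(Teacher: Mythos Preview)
Your proof is correct and follows essentially the same approach as the paper: both verify that $K$ has property $\BH^p_+(0)$, then inductively apply Lemma~\ref{lemma:bipolarity-of-C-doubling} for topological sliceness and $n$-bipolarity and Theorem~\ref{theorem:bipolar-height-raising} for property $\BH^p_+(n)$, concluding via Proposition~\ref{Proposition:bipolar-height-inequality}. Your write-up is slightly more explicit about the base case and the verification of $\BH^p_+(0)$, but the argument is the same.
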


\begin{proof}
  Since $K$ is topologically slice and $0$-bipolar, $\widehat{C_n(K)}$
  is topologically slice and $n$-bipolar by applying
  Lemma~\ref{lemma:bipolarity-of-C-doubling} inductively.  So
  $\BH(\widehat{C_n(K)}) \ge n$.  Also, since $K$ has property $\BH^p_+(0)$,
  $\widehat{C_n(K)}$ has property $\BH^p_+(n)$ by applying
  Theorem~\ref{theorem:bipolar-height-raising} inductively.  In
  particular $\BH^p(\widehat{C_n(K)}) = n$, so $\BH(\widehat{C_n(K)})
  \le n$ by Proposition~\ref{Proposition:bipolar-height-inequality}.
  Thus $\BH(\widehat{C_n(K)}) = n$ and $\widehat{C_n(K)}$ is
  $n$-bipolar but not $(n+1)$-bipolar.
\end{proof}

Using one of the knots of Cochran-Horn~\cite{Cochran-Horn:2012-1} for
$K$, as discussed in the paragraph after
Theorem~\ref{theorem:iterated-bing-doubles-of-CH-knot}, the
2-component case of Theorem~\ref{theorem:main-example} follows from
Corollary~\ref{corollary:main-example-detailed} with $p=2$.  For the general
$m$-component case, the split union of the 2-component example and an
$(m-2)$-component unlink is a link with all the desired properties.

\section{String links and a subgroup of infinite rank}
\label{Section:string-links}

Thus far, the bipolar filtration of links with $m \ge 2$ components
has been a filtration by subsets; the set of links does not have a
well defined notion of connected sums.  In this section we consider
\emph{string links} to impose more structure.

\subsection{The bipolar filtration of string links}

Although we have already used some standard string link terminology in
Section~\ref{section:main-examples}, we begin by recalling the definitions
of string links and the concordance group of string links, which is
our object of study in this section.  Readers who are familiar with
string links may skip the following three paragraphs.

Fix $m$ distinct interior points in~$D^2$ and identify these with
$[m]:= \{1,\ldots,m\}$.  An $m$-component string link $\beta$ is a
collection of $m$ properly embedded oriented disjoint arcs in
$D^2\times I$ joining $(i,0)$ to $(i,1)$, $i\in [m]$.  Let $\beta_0$
and $\beta_1$ be two $m$-component string links.  The \emph{product}
$\beta_0\cdot\beta_1$ is defined by stacking cylinders.  We say that
$\beta_0$ and $\beta_1$ are \emph{concordant} if there are $m$
properly embedded disjoint disks in $(D^2\times I)\times I$ bounded by
$(\beta_0 \times 0) \cup ([m]\times
\partial I \times I) \cup (-\beta_1\times 1)$.  Concordance classes of
string links form a group under the product operation.  The identity
is the \emph{trivial string link} $[m]\times I \subset D^2\times I$.
The \emph{inverse} $\beta^{-1}$ of $\beta$ is defined to be its image
under the automorphism $(x,t) \to (x,1-t)$ on $D^2\times I$.  A string
link is \emph{slice} if it is concordant to the trivial string link.

The quotient space of $D^2\times I$ obtained by identifying $D^2\times
0$ and $D^2\times 1$ under the identity map and collapsing $x\times I$
to a point for each $x\in \partial D^2$ is diffeomorphic to~$S^3$.  The
\emph{closure} $\widehat\beta\subset S^3$ of $\beta$ is defined to be
the image of $\beta$ under the quotient map.  A string link $\beta$ is
slice if and only if $\widehat\beta$ is slice as a link.  Consequently
two string links $\beta_0$ and $\beta_1$ are concordant if and only if the
closure of $\beta_0^{\vphantom{-1}}\beta_1^{-1}$ is slice as a link.
We note that if two string links are concordant then so are their
closures, but the converse does not hold in general; for an in-depth
study related to this, the readers are referred
to~\cite{Habegger-Lin:1998-1}.

Note that our definitions are also meaningful in the topological
category with locally flat submanifolds.  In particular the notion of
a topologically slice string link is defined.

We say that a string link is \emph{$n$-positive}, \emph{$n$-negative}
or \emph{$n$-bipolar} if its closure is, respectively, $n$-positive,
$n$-negative or $n$-bipolar.  Equivalently, these notions can be
defined by asking whether a string link is slice in a $4$-manifold
$V$ with $\partial V = \partial (D^2 \times I\times I)$, where $V$
should satisfy the properties of
Definition~\ref{definition:homotopy-n-positon}.

As in the introduction, we denote the subgroup of topologically slice
$n$-bipolar string links with $m$-components by~$\TSL_n(m)$.  Note
that $\TSL_n(m)$ is closed under group operations by
Theorem~\ref{lemma:basic-properties-of-Z_p-positon}
(\ref{theorem-case:positivity-mirror-image}),
(\ref{theorem-case:positivity-changing-string-orientation}),
(\ref{theorem-case:positivity-split-union}) and
(\ref{theorem-case:positivity-band-sum}), since the closure of
$\beta^{-1}$ is $-\widehat\beta$ and the closure of $\beta_0\beta_1$
is obtained from $\widehat\beta_0 \sqcup \widehat\beta_1$ by band sum.
Also, $\TSL_n(m)$ is a normal subgroup since $\beta$ and a conjugate
of $\beta$ have concordant closures.

This section is devoted to the following special case of
Theorem~\ref{theorem:string-links-infinite-rank} in the introduction:

\begin{theorem}\label{theorem:2-component-string-links-infinite-rank}
  For any $n \ge 0$ the quotient $\TSL_n(2)/\TSL_{n+1}(2)$ contains a
  subgroup whose abelianization is of infinite rank.  For $n\ge 1$,
  the subgroup is generated by string links with unknotted components.
\end{theorem}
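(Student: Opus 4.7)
The plan is to generalize Corollary~\ref{corollary:main-example-detailed} to an infinite family of string links that are $\Z$-linearly independent modulo $\TSL_{n+1}(2)$, by pulling back Cochran-Horn $d$-invariant obstructions through an iterated string link covering homomorphism.

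First, I would select, following Cochran-Horn~\cite{Cochran-Horn:2012-1}, an infinite sequence of knots $K_1, K_2, \ldots \in \TSL_0(1)$ that are topologically slice, $0$-bipolar, and $\Z_{(2)}$-homology $1$-positive but not $1$-bipolar, and whose classes in the abelianization of $\TSL_0(1)/\TSL_1(1)$ are $\Z$-linearly independent. The linear independence is detected by $d$-invariants of $2$-fold branched covers at appropriate metabolizing elements via Theorem~\ref{theorem:d-invariant-obstruction}. For $n=0$, the theorem follows immediately by taking split unions of each $K_i$ with a trivial strand. For $n \ge 1$, set $\eta_i := C_n(K_i)$, a $2$-component string link. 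By iterating Lemma~\ref{lemma:bipolarity-of-C-doubling}, each $\eta_i$ lies in $\TSL_n(2)$ and has unknotted components. Let $H \le \TSL_n(2)$ be the subgroup generated by $\{\eta_i\}$.

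The detection mechanism is a homomorphism $\Psi\colon H \to A$ defined as the $n$-fold iterate of the string link version of the height-one covering operation, where $A$ is a suitable quotient of the concordance group associated to $\TSL_0(1)$. The crucial point, supplied by Definition~\ref{definition:covering-string-link} and Lemma~\ref{lemma:covering-string-link-and-product}, is that the string link covering construction respects concatenation, so that $\Psi$ is a genuine group homomorphism. By Theorem~\ref{theorem:covering-positon-theorem}, $\Psi$ sends $H \cap \TSL_{n+1}(2)$ into the image of $\TSL_1$, on which the Cochran-Horn $d$-invariant obstruction vanishes. Iterating the explicit formula of Theorem~\ref{theorem:height-one-covering-for-main-example} shows that $\Psi(\eta_i)$ is the concordance class of $K_i$ together with auxiliary factors built from plat closures of $C_k(K_i)$ for $k<n$; these auxiliary factors are themselves topologically slice and of sufficiently high bipolar height that they also lie in the kernel of the chosen $d$-invariants. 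Composing $\Psi$ with the Cochran-Horn $d$-invariant homomorphisms produces a family of maps $H \to \R$ that vanish on $H \cap \TSL_{n+1}(2)$ and are $\Z$-linearly independent on $\{\eta_i\}$, yielding the required infinite-rank subgroup.

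The main obstacle is converting the covering construction into a bona fide group homomorphism on string links. The operation $C$ is not itself a homomorphism, and the covering formula of Theorem~\ref{theorem:height-one-covering-for-main-example} mixes $\beta$ with its plat closure $\widetilde\beta$ in a way that does not manifestly commute with concatenation once closures have been taken. The resolution is to define the covering operation entirely at the level of $D^2\times I$, so that it intertwines with the product structure of the string link group; this is precisely the content of Definition~\ref{definition:covering-string-link} and Lemma~\ref{lemma:covering-string-link-and-product}. A secondary delicate point is the bookkeeping of the plat-closure contributions accumulated across the $n$ iterations, which must all be verified to lie in the kernel of the final $d$-invariants so that they do not interfere with detecting the $K_i$ themselves.
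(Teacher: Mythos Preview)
Your overall strategy matches the paper's: generate the subgroup by $C_n(K_i)$ for the Cochran--Horn knots, use the covering string link homomorphism of Lemma~\ref{lemma:covering-string-link-and-product} to reduce to height zero, and invoke $d$-invariants of double branched covers. However, there is a genuine gap that the paper confronts head-on and that your sketch does not mention at all.

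The iterated covering string link of Corollary~\ref{corollary:iterated-string-link-covering-for-main-example} does \emph{not} live in $D^2\times I$; it lives in the $D^2\times I$ summand of a connected sum with several $\Z_{(p)}$-homology spheres of the form $N_{p^a}(\widetilde{C_k(K_{i_j})})$. Hence $\Psi(\beta)$ is a knot in a nontrivial $\Z_{(p)}$-homology sphere, and its $d$-invariants pick up contributions from these ambient summands. The ``auxiliary factors'' you mention are only the \emph{knot} summands $\widetilde{C_k(K_{i_j})}$ inside the connected sum of knots; the ambient $3$-manifold summands are a separate and more serious issue. The paper deals with them in two steps: first, for $k\ge 2$ (and for $k=1$ when $\eps_j=+1$) the manifold $N_{p^a}(\eps_j\widetilde{C_k(K_{i_j})})$ bounds a $\Z_{(p)}$-homology $1$-positon, which is glued onto the negaton to excise that summand from $\partial V$; second, the remaining summands $N_2(-\widetilde{C_1(K_{i_j})})$ with $\eps_j=-1$ cannot be removed, but they are double branched covers of positive Whitehead doubles, hence integral homology spheres whose $d$-invariant is $\le 0$ by Manolescu--Owens. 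Only after this is Proposition~\ref{proposition:cochran-horn-knot-d-invariant} applicable. Without this argument your map $\Psi$ does not land in anything on which the Cochran--Horn obstruction is defined, and the claimed vanishing on $H\cap\TSL_{n+1}(2)$ is not established.

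A secondary point: you phrase the endgame as composing with ``$d$-invariant homomorphisms $H\to\R$'', but the obstruction of Theorem~\ref{theorem:d-invariant-obstruction} involves a quantifier over metabolizers and is not literally a homomorphism. The paper instead argues by contradiction: assume the given word is $\Z_{(2)}$-homology $(n+1)$-negative, pass to the covering knot, strip the ambient summands as above, and contradict Proposition~\ref{proposition:cochran-horn-knot-d-invariant}. This is cleaner than trying to linearize the $d$-invariant.
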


For $n=0$, the theorem follows from the result for knots in
\cite{Cochran-Horn:2012-1} by taking the disjoint union of (string
link representations of) the Cochran-Horn knots with a trivial strand.
Theorem~\ref{theorem:string-links-infinite-rank} for the $m>2$
component case follows by adjoining the correct number of trivial
strands.

\subsection{Covering string links}

To investigate the group structure of the string link concordance
group, we formulate a string link version of the covering link
calculus.  For this purpose, again similarly to the link case, it is
natural to consider string links in a $\Z_{(p)}$-homology $D^2\times
I$, which we will call \emph{$\Z_{(p)}$-string links}.  Here a
3-manifold $Y$ is said to be a $\Z_{(p)}$-homology $D^2\times I$ if
$H_*(Y;\Z_{(p)})\cong H_*(D^2\times I;\Z_{(p)})$ and the boundary
$\partial Y$ is identified with $\partial (D^2\times I)$.  The
boundary identification enables us to define product, closure, and
$\Z_{(p)}$-homology concordance of $\Z_{(p)}$-string links.  In
addition, we define the $\Q/\Z$-valued self-linking number of a
component of a $\Z_{(p)}$-string link to be that of the corresponding
component of the closure.  We remark that all $\Z_{(p)}$-string links
considered in this section have components with vanishing
$\Q/\Z$-valued self-linking.

A string link is defined to be $\Z_{(p)}$-homology
\emph{$n$-positive}, \emph{$n$-negative} or \emph{$n$-bipolar} if its
closure is $\Z_{(p)}$-homology \emph{$n$-positive},
\emph{$n$-negative} or \emph{$n$-bipolar}, respectively.  The
definitions of the bipolar heights $\BH(\beta)$ and $\BH^p(\beta)$
carry over verbatim from the ordinary link case.

For a $\Z_{(p)}$-string link $\beta$, we consider the following
operations: (CL1) taking a sublink of $\beta$, and (CL2) taking the
pre-image of $\beta$ in the $p^a$-fold cyclic cover of the ambient
space branched along a component of $\beta$ with vanishing
$\Q/\Z$-valued self-linking.  The result can always be viewed as a
$\Z_{(p)}$-string link; for this purpose we fix, in (CL2), an
identification of the $p^a$-fold cyclic branched cover of $(D^2,[m])$
branched along $i\in [m]$ with $(D^2,[p^a(m-1)+1])$.

\begin{definition}
  \label{definition:covering-string-link}
  A string link obtained from $\beta$ by a finite
  sequence of (CL1) and/or (CL2) is called a \emph{$p$-covering string
    link of $\beta$ of height $\le h$} (or \emph{of height $h$} as an
  abuse of terminology), where $h$ is the number of~(CL2) operations.
\end{definition}

The following immediate consequence of the definitions will be useful.
Note that a fixed sequence of operations (CL1) and (CL2) starting from
an $m$-component string link gives rise to a covering string link of
any $m$-component string link, which we refer to as a
\emph{corresponding} covering string link.

\begin{lemma}
  \label{lemma:covering-string-link-and-product}
  The product operation of string links commutes with covering string
  link operations.  In other words, a covering string link of a
  product is the product of the corresponding covering string links.
\end{lemma}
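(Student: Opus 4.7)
The plan is to reduce to verifying the claim for a single covering move, then check separately that each of the two covering operations (CL1) and (CL2) commutes with stacking. Let $\beta_0,\beta_1$ be two $m$-component $\Z_{(p)}$-string links, with product $\beta_0\cdot\beta_1$ obtained by stacking the ambient cylinders along the identified boundary disk $D^2\times\{1\}\sim D^2\times\{0\}$ with the $m$ endpoints matched.

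The (CL1) case is essentially tautological: a sublink is obtained by deleting the same set of indices $S\subset[m]$ from each factor, and the resulting strands in $\beta_0$ match up across the middle disk with the corresponding strands of $\beta_1$, so the sublink of $\beta_0\cdot\beta_1$ indexed by $[m]\setminus S$ coincides with the product of the sublinks of $\beta_0$ and $\beta_1$ indexed by $[m]\setminus S$.

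The main content is the (CL2) case. Fix a component $i\in[m]$ and a prime power $p^a$. In each factor $(D^2\times I,\beta_j)$, choose a properly embedded disk $F_j$ whose boundary is the $i$-th strand of $\beta_j$ together with an arc in $\partial D^2\times I$; such a disk exists because the $i$-th strand together with the boundary arc bounds (up to homology) by the same reasoning that lets one define the branched cover in the first place (using the vanishing of $\Q/\Z$-self-linking and the zero-framing discussion preceding Definition~\ref{definition:covering-links}). Stacking $F_0$ on top of $F_1$ produces a spanning disk $F$ for the $i$-th strand of $\beta_0\cdot\beta_1$. The $p^a$-fold cyclic branched cover can be built by cutting along such a spanning disk, taking $p^a$ copies, and gluing cyclically; since cutting and stacking are compatible operations (the middle disk $D^2\times\{1\}\sim\{0\}$ meets the $i$-th strand transversely in a single point, and its $p^a$-fold cover branched at that single point is the fixed identification $(D^2,[p^a(m-1)+1])$), the resulting $\Z_{(p)}$-string link is the stacking of the branched covers of $(D^2\times I,\beta_0)$ and $(D^2\times I,\beta_1)$ along the $i$-th strand. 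Under the fixed identification of covers, this stacking is precisely the product of the corresponding (CL2) covering string links, as required.

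A single (CL1) or (CL2) move therefore commutes with the product, and the general statement follows by induction on the length of the sequence of covering operations, noting that at each stage the identifications of base points in the covers ensure that the ``corresponding'' covering string link is well-defined for any $m$-component input. The only step requiring a modicum of care is (CL2), namely checking that the branched cover of the middle disk matches the fixed identification of $(D^2,[p^a(m-1)+1])$ on both sides of the stacking; this is automatic from the fact that the middle disk is itself the base of a single $p^a$-fold branched cover around one marked point.
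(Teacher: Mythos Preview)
The paper does not prove this lemma; it is stated as ``an immediate consequence of the definitions'' with no argument given. Your write-up fleshes out that assertion, and your overall plan --- reduce to a single move, note that (CL1) is tautological, and for (CL2) observe that the branched cover decomposes along the separating middle disk whose own branched cover is the fixed $(D^2,[p^a(m-1)+1])$ --- is exactly the right one.

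There is one imprecision worth flagging in the (CL2) step. You assert the existence of a properly embedded \emph{disk} $F_j$ in each factor bounded by the $i$-th strand together with an arc on the boundary. In a general $\Z_{(p)}$-homology $D^2\times I$ this can fail on two counts: first, even in the standard $D^2\times I$ the strand may be knotted, so the spanning surface need not be a disk; second, and more seriously, $H_1$ with integer coefficients may carry torsion coprime to $p$, so an honest complexity-one spanning surface need not exist at all --- only a generalized one with several parallel boundary copies, as in the zero-framing discussion preceding Lemma~\ref{lemma:positivity-zero-framing}. The cleanest fix is to bypass the cut-and-paste model: the $p^a$-fold branched cover is determined by the homomorphism from $\pi_1$ of the complement of the branching strand to $\Z_{p^a}$ sending a meridian to a generator. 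Decomposing the ambient space of $\beta_0\cdot\beta_1$ as $Y_0\cup_{D^2}Y_1$ and restricting this homomorphism to each piece recovers the homomorphisms for the factors, and the cover of the middle $(D^2,[m])$ branched at the point $i$ is the fixed $(D^2,[p^a(m-1)+1])$ on both sides, so the two covers glue to give the product. This is essentially what your final paragraph already says; the only change is to drop the appeal to the disks $F_j$, which are neither needed nor, in general, available.
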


Essentially, Lemma~\ref{lemma:covering-string-link-and-product} says
that the covering string link operation induces a \emph{homomorphism}
of the string link concordance groups.

\subsection{Computation for string link examples}

To show Theorem~\ref{theorem:2-component-string-links-infinite-rank}
we consider the subgroup generated by the 2-component string links
$C_n(K_i)$, constructed in Section \ref{section:main-examples}, where
$K_i$ are the knots used to
prove~\cite[Theorem~1.1]{Cochran-Horn:2012-1}.  Indeed, for most of
the proof it suffices to assume that the $K_i$ are knots which are
topologically slice, 0-bipolar, and 1-positive.  The exception to this
is that in the last part of
Section~\ref{subsection:proof-of-theorem-infinite-rank} we need the
$K_i$ to also satisfy a technical condition on certain $d$-invariants
of double covers, which was shown in~\cite{Cochran-Horn:2012-1} (see
Proposition~\ref{proposition:cochran-horn-knot-d-invariant}).  As
before we often regard a knot as a string link with one component and
vice versa.

A typical element in the subgroup generated by the $C_n(K_i)$ is of
the form $\prod_{j=1}^s C_n(K_{i_j})^{\eps_j}$ where
$\epsilon_{j}=\pm 1$.

\begin{lemma}\label{lemma:string-links-bipolarity-of-C-doubling}
  The string link $\prod_{j=1}^s C_n(K_{i_j})^{\eps_j}$ has $\BH \ge
  n$, $\BH^p\ge n$ for any prime $p$ and is topologically slice.
\end{lemma}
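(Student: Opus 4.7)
The plan is to establish the three claimed properties for each factor $C_n(K_{i_j})^{\eps_j}$ and then propagate them to the product by writing the closure of the product as a sequence of band sums applied to the split union of the factor closures. Since this lemma is essentially an assembly of earlier results, there is no serious obstacle; the substance lies in tracking how each previously established result is invoked and in recognizing that the inverse of a string link corresponds, on closure, to mirror image with reversed string orientations.

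First, by induction on $n$, each $C_n(K_{i_j})$ is topologically slice and $n$-bipolar. The base case $n=0$ is the hypothesis that the Cochran-Horn knots $K_{i_j}$ are topologically slice and $0$-bipolar. For the inductive step, write $C_n(K_{i_j}) = C(C_{n-1}(K_{i_j}))$ and invoke Lemma~\ref{lemma:bipolarity-of-C-doubling}, which simultaneously raises the bipolar height by one and preserves topological sliceness. Consequently the inverse $C_n(K_{i_j})^{-1}$ also shares these properties: its closure is $-\widehat{C_n(K_{i_j})}$, namely the mirror image with reversed string orientations. Both of these operations preserve $n$-bipolarity by Theorem~\ref{theorem:basic-construction-and-positivity}~(\ref{theorem-case:positivity-mirror-image}) and~(\ref{theorem-case:positivity-changing-string-orientation}), applied to both halves of the bipolarity condition, and they clearly preserve topological sliceness.

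Next, as recalled in the discussion of $\TSL_n(m)$, the closure $\widehat{\beta_0\beta_1}$ of a product of string links is obtained from $\widehat{\beta_0}\sqcup\widehat{\beta_1}$ by band sums of distinct components. Iterating this, the closure $\widehat\beta$ of $\beta := \prod_{j=1}^s C_n(K_{i_j})^{\eps_j}$ is obtained from $\bigsqcup_{j=1}^s \widehat{C_n(K_{i_j})^{\eps_j}}$ by a finite sequence of band sums. Parts~(\ref{theorem-case:positivity-split-union}) and~(\ref{theorem-case:positivity-band-sum}) of Theorem~\ref{theorem:basic-construction-and-positivity} then show that $\widehat\beta$ is $n$-bipolar; the analogous statements for topological sliceness are immediate from the geometric constructions given in the proof of that theorem (split union and band sum of topologically slice links are topologically slice). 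Hence $\beta$ is topologically slice as a string link and $\BH(\beta)\ge n$. Finally, $\BH^p(\beta)\ge n$ for every prime~$p$ follows from Proposition~\ref{Proposition:bipolar-height-inequality}.
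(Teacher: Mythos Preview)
Your proof is correct and follows essentially the same approach as the paper's: reduce to each factor via Lemma~\ref{lemma:bipolarity-of-C-doubling}, handle $\eps_j=-1$ via mirror image and orientation reversal, and pass to the product via split union and band sum. Your version is slightly more explicit (you mention the split union step and the induction on $n$ separately), and you deduce $\BH^p\ge n$ from $\BH\ge n$ via Proposition~\ref{Proposition:bipolar-height-inequality} rather than repeating the argument in the $\Z_{(p)}$-homology setting, but these are cosmetic differences.
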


\begin{proof}
  Each factor string link $C_n(K_{i_j})^{\eps_{j}}$ has closure which
  is $n$-bipolar, by Lemma~\ref{lemma:bipolarity-of-C-doubling}.  If
  $\eps_{j}=-1$ then this still holds by
  Theorem~\ref{theorem:basic-construction-and-positivity}~(\ref{theorem-case:positivity-mirror-image})
  and~(\ref{theorem-case:positivity-changing-string-orientation}). The
  closure of the product of the $C_n(K_{i_j})^{\eps_{j}}$ is a band
  sum of the closures of the~$C_n(K_{i_j})^{\eps_{j}}$.  The latter
  link is $n$-bipolar by
  Theorem~\ref{theorem:basic-construction-and-positivity}~(\ref{theorem-case:positivity-band-sum}).

  Similarly, $\prod_{j=1}^s C_n(K_{i_j})^{\eps_{j}}$ is topologically
  slice and $\Z_{(p)}$-homology $n$-bipolar.
\end{proof}

The remaining part of this section is devoted to showing that the
subgroup generated by the $C_n(K_i)$ has infinite rank abelianization.
It turns out that for this purpose we need a more complicated
application of the covering link calculus than we used in
Section~\ref{section:main-examples}.  In fact this is related to the
orientation reversing which was performed in the last paragraph of the proof of
Theorem~\ref{theorem:height-one-covering-for-main-example}.  This is
not allowed for string links if we want our covering links to respect
the product structure, as in Lemma~\ref{lemma:covering-string-link-and-product}.

To describe our covering string link calculation, we use the following
notation.  For a string link $\beta$, define $r(\beta)$ to be $\beta$
with reversed string orientation, and define $r_s(\beta)=r(\cdots
(r(\beta)) \cdots)$ where $r$ is applied $s$ times.  Note that
$r_s(\beta)=\beta$ if $s$ is even, whereas $r_s(\beta)=r(\beta)$ if $s$ is odd.
Define $T(\beta)$ to be the string link shown in
Figure~\ref{figure:string-link-twisting}, and
$T_s(\beta)=T(\cdots(T(\beta)) \cdots)$ where $T$ is applied $s$
times. For a 1-component string link $\alpha$, let
$\ell_2(\alpha)=\ell(\alpha,\alpha)$ be the split union of two copies
of~$\alpha$, which is viewed as a 2-component string link.  Recall
from Section~\ref{section:main-examples} that $\tilde\beta$ is the
1-component string link shown in Figure~\ref{figure:plat-closure}.
Define $N_d(J)$ to be the $d$-fold cyclic branched cover of a
knot~$J$.  We denote $N_d(\widehat\alpha)$ by $N_d(\alpha)$ for a
1-component string link $\alpha$, viewing $\alpha$ as a knot.  Given a
string link $\beta$ in $Y$ and another 3-manifold $N$, remove a 3-ball
from $Y$ which is disjoint to $\beta$.  Filling in this 3-ball with a
punctured $N$, we obtain a new string link in $Y\#N$. We call the
result of this construction ``$\beta$ in the $Y$ summand of $Y \# N$''.

\begin{figure}[H]
  \labellist
  \hair 0mm
  \pinlabel {$\beta$} at 38 13
  \endlabellist
  \includegraphics{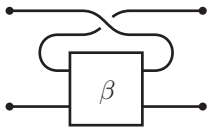}
  \caption{The string link $T(\beta)$.}
  \label{figure:string-link-twisting}
\end{figure}

\begin{theorem}
  \label{theorem:height-one-string-link-covering-for-main-example}
  Suppose $\beta$ is a 2-component string link with unknotted
  components.  Then the string link $T_s(r_t(C(\beta)))\cdot
  \ell_2(\alpha)$ has, as a $p$-covering string link of height one,
  the string link $T_{2s+1}(r_{s+t+1}(\beta))\cdot
  \ell_2(r_{s+t}(\tilde\beta)\cdot \alpha)$ in the $D^2\times I$
  summand of $(D^2\times I) \# N_{p^a}(\alpha)$.
\end{theorem}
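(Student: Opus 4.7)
The plan is to imitate the pictorial argument of Theorem~\ref{theorem:height-one-covering-for-main-example} in the string link category, with additional bookkeeping for the operators $T$ and $r$. First I would identify a strand whose closure can serve as the branching curve. Because $\beta$ has unknotted components, so do the strands of $C(\beta)$, and hence of $T_s(r_t(C(\beta)))$, since $C$, $T$, and $r$ alter individual strand closures only by connected sum with unknotted pieces. Consequently, after multiplication by $\ell_2(\alpha)$, each of the two strands of $T_s(r_t(C(\beta)))\cdot \ell_2(\alpha)$ has closure isotopic to $\widehat\alpha$. I would apply operation (CL2) along a distinguished such strand, taking the $p^a$-fold cyclic branched cover for $p^a$ sufficiently large (e.g.\ $p^a\ge 5$, as in the earlier proof) that no unintended identifications occur among distant lifts. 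Since the branching closure is $\widehat\alpha$, the resulting ambient space is precisely the $D^2\times I$ summand of $(D^2\times I)\# N_{p^a}(\alpha)$.

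The next step is to apply (CL1) and isolate two carefully chosen lifts of the remaining strand. The branched cover is obtained by cutting $D^2\times I$ along a disk bounded by a meridian of the branching strand and gluing $p^a$ copies of the resulting complement in a cyclic stack. As in Figures~\ref{figure:cyclic-branched-covering} and~\ref{figure:two-component-covering-link}, the cyclic stacking exposes a plat-closure block at each seam where two consecutive lifts of the $C$-doubled portion are glued; this is the origin of the $\tilde\beta$ factor in the output. The second copy of $\alpha$ in $\ell_2(\alpha)$ lies disjointly from the branching strand and so survives untouched in the cover, accounting for the trailing $\cdot\,\alpha$ in the second factor of the output. Choosing two lifts that form a 2-component string link after isotopy produces the desired covering string link of height one.

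The main task, and what I expect to be the main obstacle, is to identify this 2-component covering string link with $T_{2s+1}(r_{s+t+1}(\beta))\cdot \ell_2(r_{s+t}(\tilde\beta)\cdot \alpha)$. This requires a careful tally of how each $T$- and $r$-block in the input lifts across the cyclic cover: the indices $2s+1$, $s+t+1$, and $s+t$ should emerge by combining the $s$ original $T$-blocks (each contributing two lifted copies in the retained sublink) with a residual $T$-block inherent in the $C$-doubling construction, and by combining the $t$ original $r$-reversals additively with further orientation reversals forced by traversing the cyclic stack. I would handle this by first verifying the base case $s=t=0$—where the statement reduces to a string-link-level refinement of Theorem~\ref{theorem:height-one-covering-for-main-example} augmented by $\ell_2(\alpha)$—and then treating $T$ and $r$ as local diagrammatic moves whose lifts to the cover can be computed individually and propagated to general $s,t$ by induction.
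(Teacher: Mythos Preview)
Your overall strategy coincides with the paper's: branch along one strand (whose closure is $\widehat\alpha$, hence the ambient $(D^2\times I)\#N_{p^a}(\alpha)$) using $p^a\ge 5$, retain two lifts of the other strand, and simplify by isotopy.  The paper differs only in how the $s,t$-bookkeeping is organised.  Rather than verifying $s=t=0$ and inducting, the paper fixes $t=0$ (noting that only the parity of $t$ matters, so $t=1$ is analogous) and tracks an arbitrary $s$ directly through a sequence of explicit figures: after the cover and sublink one sees two boxes of $s$ half-twists, and the isotopy that planarises the picture introduces one extra half-twist on the lower strand, yielding boxes of $s$ and $s+1$ flanking the $\beta$-block; sliding $\beta$ across the $s+1$ twists merges the boxes into $2s+1$ and simultaneously accounts for the $r_{s+1}$ reversal, while moving the local $\tilde\beta$-knots across $s$ twists gives the $r_s(\tilde\beta)$ factors.

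One caution about your plan: the base case $s=t=0$ is \emph{not} merely a string-link refinement of Theorem~\ref{theorem:height-one-covering-for-main-example}.  That earlier proof finishes by reversing the orientation of one component, which is exactly the move forbidden in the string link category (this is why the present theorem exists).  The residual $T_1$ and $r_1$ in the output at $s=t=0$ are precisely the cost of avoiding that reversal, and must be produced by a genuine string-link isotopy rather than imported from the earlier argument.  Your induction scheme can be made to work, but you will need the full pictorial computation already at the base case; once that is in hand the direct approach for general $s$ is no harder than the inductive step.
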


See Figures~\ref{figure:doubled-string-link}
and~\ref{figure:covering-doubled-string-link} for the base and
covering string links in
Theorem~\ref{theorem:height-one-string-link-covering-for-main-example}
for $t=0$.  Here, \fbox{$s$} represents $s$ left-handed \emph{half}
twistings (i.e.\ $s$ positive crossings) arranged vertically, which are
obtained by applying~$T_s$.

\begin{figure}[H]
  \labellist
  \hair 0mm
  \pinlabel {$\beta$} at 55 61
  \pinlabel {$s$} at 64.5 117
  \pinlabel {$\alpha$} at 204 135
  \pinlabel {$\alpha$} at 204 10
  \endlabellist
  \includegraphics{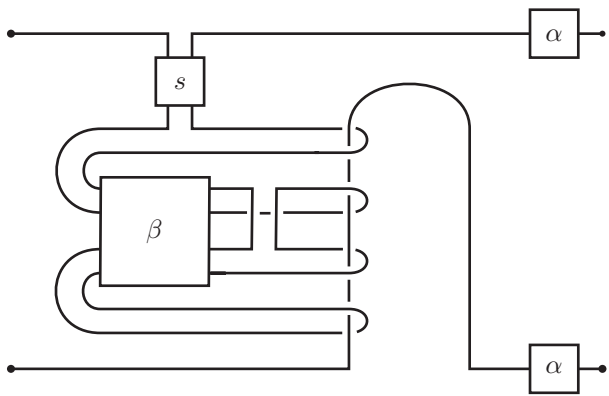}
  \caption{The string link $T_s(C(\beta))\cdot \ell_2(\alpha)$.}
  \label{figure:doubled-string-link}
\end{figure}

\begin{figure}[H]
  \labellist
  \hair 0mm
  \pinlabel {$2s+1$} at 38 59
  \pinlabel {$r_{s+1}(\beta)$} at 38 15
  \pinlabel {$r_s(\tilde\beta)$} at 85 78
  \pinlabel {$r_s(\tilde\beta)$} at 85 10
  \pinlabel {$\alpha$} at 119 77
  \pinlabel {$\alpha$} at 119 9
  \endlabellist
  \includegraphics{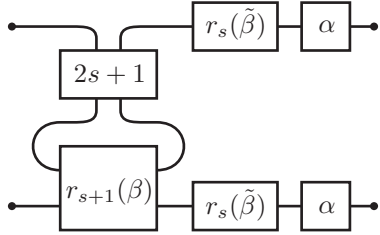}
  \caption{The string link $T_{2s+1}(r_{s+1}(\beta))\cdot
    \ell_2(r_s(\tilde\beta)\cdot \alpha)$.}
  \label{figure:covering-doubled-string-link}
\end{figure}

\begin{proof}[Proof of
  Theorem~\ref{theorem:height-one-string-link-covering-for-main-example}]
  We will give a proof for $t=0$ only, since exactly the same argument
  shows the $t=1$ case; only the residue of $t$ modulo $2$ matters.  By choosing $a$ such that $p^a\ge 5$ and
  taking the $p^a$-fold cyclic branched cover of $D^2\times I$ along
  the first (bottom) component of the link in
  Figure~\ref{figure:doubled-string-link}, we obtain a $p$-covering
  link which is similar to that shown in
  Figure~\ref{figure:cyclic-branched-covering}.  By taking a
  sublink similar to the sublink which was taken to pass from
  Figure~\ref{figure:cyclic-branched-covering} to
  Figure~\ref{figure:two-component-covering-link}, we obtain the
  covering string link shown in
  Figure~\ref{figure:two-component-covering-string-link}, which is in
  the $D^2\times I$ summand of $(D^2\times I)\# N_{p^a}(\alpha)$.

  \begin{figure}[H]
    {\small
    \labellist
    \hair 0mm
    \pinlabel {$\beta$} at 49 303
    \pinlabel {$\beta$} at 119 239
    \pinlabel {$\beta$} at 189 163
    \pinlabel {$\beta$} at 259 98
    \pinlabel {$\beta$} at 329 22
    \pinlabel {$s$} at 197.5 206
    \pinlabel {$s$} at 337 66
    \pinlabel {$\alpha$} at 267 217
    \pinlabel {$\alpha$} at 407 77
    \endlabellist
    \includegraphics[scale=.8]{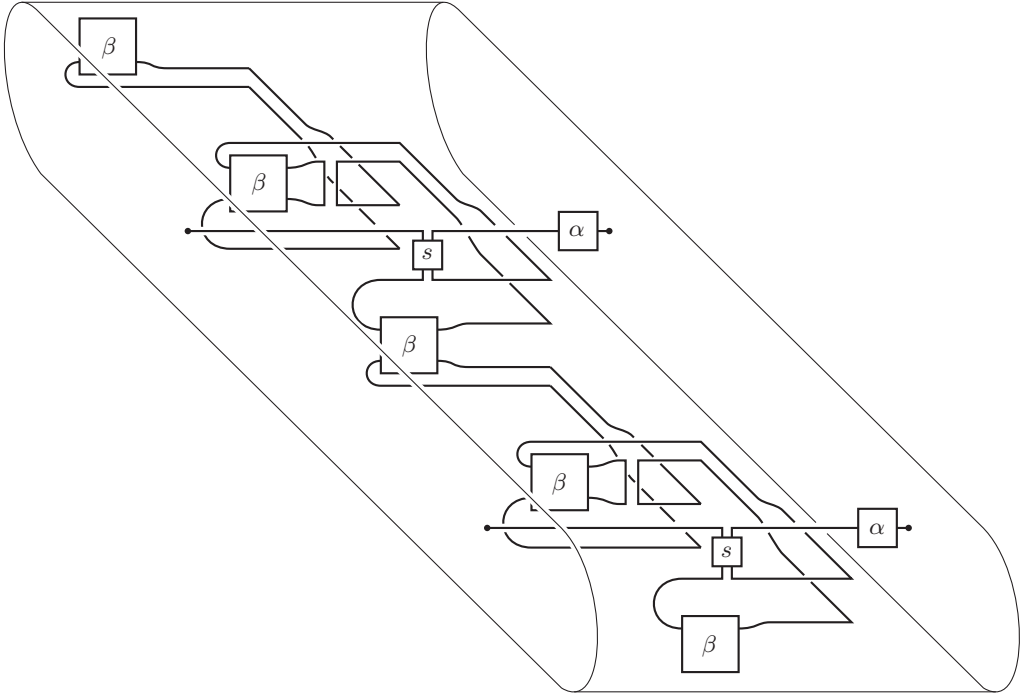}
    }
    \caption{A covering string link of
      Figure~\ref{figure:doubled-string-link}, drawn in $D^2\times
      I$.}
    \label{figure:two-component-covering-string-link}
  \end{figure}

  After an isotopy, we obtain the string link in
  Figure~\ref{figure:two-component-covering-string-link-2}.  We use
  that a single component of $\beta$ is unknotted, so that the upper
  left and lower right occurrences of $\beta$ in
  Figure~\ref{figure:two-component-covering-string-link} are removed.

  \begin{figure}[H]
    \labellist
    \hair 0mm
    \pinlabel {$\beta$} at 104 105
    \pinlabel {$\alpha$} at 183.5 146.5
    \pinlabel {$\alpha$} at 307 34
    \pinlabel {$s$} at 237 23
    \pinlabel {$s$} at 113 135
    \pinlabel {$\widetilde\beta$} at 37 183
    \pinlabel {$\widetilde\beta$} at 157 69
    \endlabellist
    \includegraphics[scale=1]{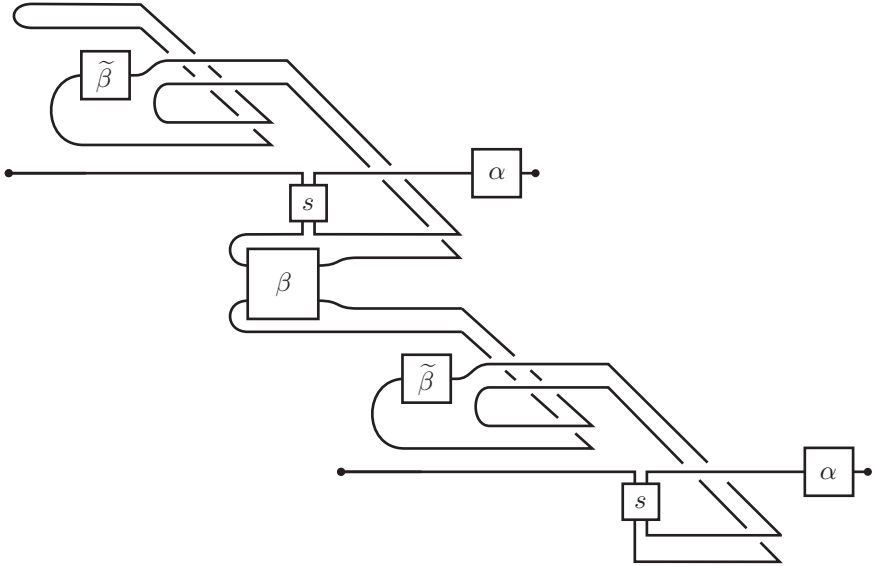}
    \caption{A simplified version of
      Figure~\ref{figure:two-component-covering-string-link}.}
    \label{figure:two-component-covering-string-link-2}
  \end{figure}

  Then a further isotopy gives us
  Figure~\ref{figure:isotoped-covering-string-link}.  Here the upper
  component in
  Figure~\ref{figure:two-component-covering-string-link-2} corresponds
  to the upper component of
  Figure~\ref{figure:isotoped-covering-string-link}.  Using the fact
  that a local knot on a component of a string link can be moved to
  anywhere on the same component, we see that the upper left
  $\widetilde\beta$ in
  Figure~\ref{figure:two-component-covering-string-link-2} becomes the
  upper $r_s(\tilde\beta)$ part in
  Figure~\ref{figure:isotoped-covering-string-link}.  The lower
  component is simplified similarly but an additional half twist is
  introduced.

  \begin{figure}[H]
    \labellist
    \hair 0mm
    \pinlabel {$\beta$} at 38 65
    \pinlabel {$s+1$} at 38 27.5
    \pinlabel {$s$} at 38 104
    \pinlabel {$r_s(\tilde\beta)$} at 76 122
    \pinlabel {$r_s(\tilde\beta)$} at 76 10
    \pinlabel {$\alpha$} at 110 122
    \pinlabel {$\alpha$} at 110 10
    \endlabellist
    \includegraphics{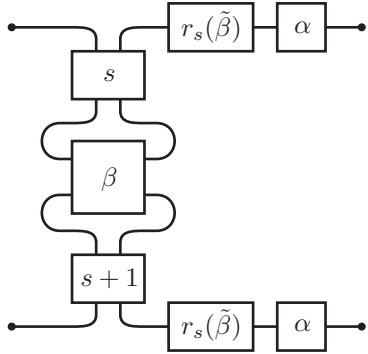}
    \caption{A planar version of
      Figure~\ref{figure:two-component-covering-string-link-2}.}
    \label{figure:isotoped-covering-string-link}
  \end{figure}

  Finally, by moving the box \fbox{$\beta$} down, across the $s+1$ half
  twists, we obtain the desired covering string link
  $T_{2s+1}(r_{s+1}(\beta))\cdot \ell_2(r_s(\tilde\beta)\cdot\alpha)$
  illustrated in Figure~\ref{figure:covering-doubled-string-link}.
\end{proof}

\begin{corollary}
  \label{corollary:iterated-string-link-covering-for-main-example}
  For any 1-component string link $\gamma$, the 2-component string
  link $C_n(\gamma)$ has, as a $p$-covering string link of height $n$,
  the 1-component string link 
  $\gamma \cdot r(\gamma)\cdot
  \widetilde{C_1(\gamma)}\cdots\widetilde{C_{n-1}(\gamma)}$
  in the
  $D^2\times I$ summand of the connected sum of $D^2\times I$ and
  $\Z_{(p)}$-homology spheres of the form
  $N_{p^a}(\widetilde{C_k(\gamma)})$ where either $2\le k\le n-1$ or
  $(k,p^a)=(1,p)$.
\end{corollary}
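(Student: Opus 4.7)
The plan is to apply Theorem~\ref{theorem:height-one-string-link-covering-for-main-example} iteratively $n-1$ times, then perform one further specialized covering step that reduces the output from a two-component to a one-component string link. In each of the first $n-1$ steps the parameters $s, t, \alpha$ in the theorem are chosen to match the accumulated form of the intermediate covering string link, so that the next application produces the expected shape.

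Starting from $C_n(\gamma) = T_0(r_0(C(C_{n-1}(\gamma)))) \cdot \ell_2(\mathrm{trivial})$, the first application with $\beta = C_{n-1}(\gamma)$, $s = t = 0$, and $\alpha$ trivial gives the height-one covering string link $T_1(r(C_{n-1}(\gamma))) \cdot \ell_2(\widetilde{C_{n-1}(\gamma)})$ in $D^2 \times I$ (no new ambient summand because $N_{p^a}(\mathrm{trivial}) = S^3$). For the induction, suppose after $i$ iterations we have reached
\[
\eta_i := T_{2^i - 1}(r(C_{n-i}(\gamma))) \cdot \ell_2(\widetilde{C_{n-i}(\gamma)} \cdot \widetilde{C_{n-i+1}(\gamma)} \cdots \widetilde{C_{n-1}(\gamma)}).
\]
Applying Theorem~\ref{theorem:height-one-string-link-covering-for-main-example} with $\beta = C_{n-i-1}(\gamma)$, $s = 2^i - 1$, $t = 1$, and $\alpha = \widetilde{C_{n-i}(\gamma)} \cdots \widetilde{C_{n-1}(\gamma)}$ yields $\eta_{i+1}$ of the same shape. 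The bookkeeping works out because $2s+1 = 2^{i+1}-1$, while the parity checks $s+t+1 \equiv 1 \pmod 2$ and $s+t \equiv 0 \pmod 2$ convert $r_{s+t+1}$ and $r_{s+t}$ into $r$ and the identity as required. Each step adjoins $N_{p^a}(\widetilde{C_{n-i}(\gamma)} \cdots \widetilde{C_{n-1}(\gamma)}) = \bigconnsum_{k=n-i}^{n-1} N_{p^a}(\widetilde{C_k(\gamma)})$ to the ambient, and one may choose a separate $p^a \ge 5$ for each application. After $n-1$ iterations we obtain $\eta_{n-1} = T_{2^{n-1}-1}(r(C_1(\gamma))) \cdot \ell_2(\widetilde{C_1(\gamma)} \cdots \widetilde{C_{n-1}(\gamma)})$, a height-$(n-1)$ covering string link whose ambient has summands of the form $N_{p^a}(\widetilde{C_k(\gamma)})$ with $k \in \{2,\ldots,n-1\}$.

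The final step is one additional $p$-fold branched cover that cannot be supplied by Theorem~\ref{theorem:height-one-string-link-covering-for-main-example} directly, since the innermost block $C_1(\gamma) = C(\gamma)$ is built from a one-component string link $\gamma$. Instead we repeat the argument of that theorem with $p^a = p$: branching along the bottom strand of $\eta_{n-1}$, whose closure is $\bigconnsum_{k=1}^{n-1}\widehat{\widetilde{C_k(\gamma)}}$, we add $\bigconnsum_{k=1}^{n-1} N_p(\widetilde{C_k(\gamma)})$ to the ambient. The $(2^{n-1}-1)$ twists in $T_{2^{n-1}-1}$ then align consecutive lifts of the unbranched strand through the Bing-doubled block $C(\gamma)$, so that a single-component sublink of the resulting $(p+1)$-component covering traces out exactly $\gamma \cdot r(\gamma)$ in the $D^2 \times I$ summand. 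Concatenated with the contribution $\widetilde{C_1(\gamma)} \cdots \widetilde{C_{n-1}(\gamma)}$ coming from the upper copy in $\ell_2$, this sublink realizes the claimed 1-string link $\gamma \cdot r(\gamma) \cdot \widetilde{C_1(\gamma)} \cdots \widetilde{C_{n-1}(\gamma)}$. The new summand $N_p(\widetilde{C_1(\gamma)})$ introduced in this step is precisely why the condition $(k, p^a) = (1, p)$ appears in the statement.

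The main obstacle is this final step. The $n-1$ iterations of Theorem~\ref{theorem:height-one-string-link-covering-for-main-example} are routine substitutions, but reducing from two components to one in the last step requires an explicit diagrammatic verification that the $p$-fold cover together with the $T_{2^{n-1}-1}$ twists combine the lifts into $\gamma \cdot r(\gamma)$ rather than some other concatenation. This is essentially the string-link analog of the well-known fact of Cha--Livingston--Ruberman et al.\ that $K \# K^r$ is a height-one $p$-covering link of the Bing double $B(K)$; the restriction to $p^a = p$ for the $k = 1$ summand is exactly what allows this passage from two to one components at the bottom of the induction.
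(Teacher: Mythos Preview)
Your proposal is correct and follows essentially the same route as the paper: iterate Theorem~\ref{theorem:height-one-string-link-covering-for-main-example} $n-1$ times to reach $T_{2^{n-1}-1}(r(C_1(\gamma)))\cdot\ell_2(\widetilde{C_1(\gamma)}\cdots\widetilde{C_{n-1}(\gamma)})$, then perform a single $p$-fold branched cover along the bottom strand and extract one component, invoking the Cha--Livingston--Ruberman mechanism to obtain $\gamma\cdot r(\gamma)$ from the Bing-doubled block. Your explicit tracking of the parameters $s,t,\alpha$ and of the parities $s+t$, $s+t+1$ is more detailed than the paper's, which simply lists the successive outputs and defers the final step to a figure; but the argument is the same. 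One small imprecision: when you say each step ``adjoins'' $N_{p^a}(\alpha)$ to the ambient, note that the existing summands are simultaneously replaced by $p^a$ copies of themselves, since the branched cover is $p^a$-fold away from the branch locus; this does not affect your conclusion that all summands have the form $N_{p^{a'}}(\widetilde{C_k(\gamma)})$ with $k\ge 2$, and the paper is equally terse on this point.
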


\begin{proof}
  By repeated application of
  Theorem~\ref{theorem:height-one-string-link-covering-for-main-example}
  starting with $C_n(\gamma)$, we obtain
  \begin{multline*}
    T_1(r(C_{n-1}(\gamma)))\ell_2(\widetilde{C_{n-1}(\gamma)}),\;\;
    T_3(r(C_{n-1}(\gamma)))\ell_2(\widetilde{C_{n-2}(\gamma)}\widetilde{C_{n-1}(\gamma)}), \ldots, \\
    T_{2^{n-1}-1}(r(C_1(\gamma)))\ell_2(\widetilde{C_1(\gamma)}\cdots\widetilde{C_{n-1}(\gamma)})
  \end{multline*}
  as covering string links of~$C_n(\gamma)$.  The last covering string link in the above list has
  height $n-1$ and is in the $D^2\times I$ summand of $(D^2\times I)\#
  N$, where $N$ is a connected sum of $\Z_{(p)}$-homology spheres of
  the form $N_{p^a}(\widetilde{C_k(\gamma)})$ with $2\le k \le n-1$.
  This covering string link is shown in
  Figure~\ref{figure:bing-doubled-string-link}.

  \begin{figure}[H]
    \labellist
    \hair 0mm
    \pinlabel {$r(\gamma)$} at 47 32
    \pinlabel {$2^{n-1}-1$} at 47 78
    \pinlabel {$\widetilde{C_1(\gamma)}\cdots \widetilde{C_{n-1}(\gamma)}$} at 131 95
    \pinlabel {$\widetilde{C_1(\gamma)}\cdots \widetilde{C_{n-1}(\gamma)}$} at 131 9
    \endlabellist
    \includegraphics{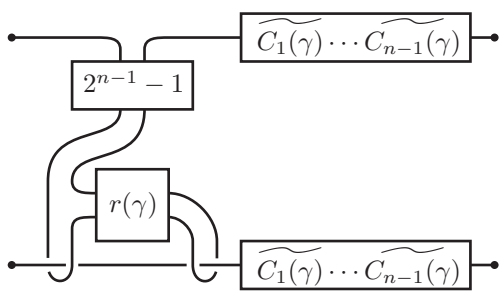}
    \caption{A height $n-1$ covering string link of $C_n(\gamma)$.}
    \label{figure:bing-doubled-string-link}
  \end{figure}

  As the final covering, we proceed similarly to
  \cite[Section~3]{Cha-Livingston-Ruberman:2006-1}.  By taking the
  $p$-fold cover branched along the bottom component of the link in
  Figure~\ref{figure:bing-doubled-string-link} and then taking a
  component, we obtain the string link illustrated in
  Figure~\ref{figure:final-covering-string-link}, which is in the
  $D^2\times I$ summand of $(D^2\times I)\#N_1$, where $N_1$ is a
  connected sum of $\Z_{(p)}$-homology spheres of the form
  $N_{p^a}(\widetilde{C_k(\gamma)})$ for either $2 \le k \le n-1$ or
  $(k,p^a)=(1,p)$.  Note that this can be done even for $p=2$; the
  final covering does not require us to take a covering of order $p^a
  \ge 5$.  It is
  easily seen that the link in
  Figure~\ref{figure:final-covering-string-link} is isotopic to
  $\gamma \cdot r(\gamma) \cdot  \widetilde{C_1(\gamma)}\cdots\widetilde{C_{n-1}(\gamma)}$ as
  desired.
\end{proof}

\begin{figure}[H]
  \labellist
  \hair 0mm
  \pinlabel {$\widetilde{C_1(\gamma)}\cdots \widetilde{C_{n-1}(\gamma)}$} at 170 100
  \pinlabel {$r(\gamma)$} at 24 60
  \pinlabel {$r(\gamma)$} at 96 37
  \pinlabel {$2^{n-1}-1$} at 95 83
  \endlabellist
  \includegraphics{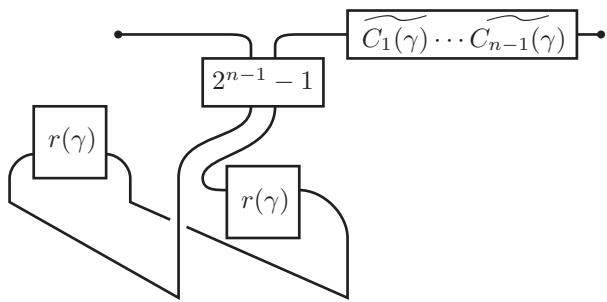}
  \caption{A height $n$ covering string link of $C_n(\gamma)$.}
  \label{figure:final-covering-string-link}
\end{figure}

\subsection{Proof of
  Theorem~\ref{theorem:2-component-string-links-infinite-rank}}
\label{subsection:proof-of-theorem-infinite-rank}

We are now ready to prove Theorem
\ref{theorem:2-component-string-links-infinite-rank}.  As before,
consider the 2-component string link $\beta = \prod_{j=1}^s
C_n(K_{i_j})^{\eps_j}$ where the $K_i$ are the knots in
\cite{Cochran-Horn:2012-1}.  We have that $\beta$ is $n$-bipolar and
topologically slice by
Lemma~\ref{lemma:string-links-bipolarity-of-C-doubling}.  Therefore,
it suffices to show the following: define $a_i := \sum_{\{j | i_j =
  i\}} \eps_{j}$.  Suppose $a_i\ne 0$ for some~$i$.  Then $\beta$ is
not $\Z_{(p)}$-homology $(n+1)$-bipolar.

In this proof we will work with a general prime $p$ for as long as
possible, although at the end of the proof we will specialize to
$p=2$.  The specialization occurs due to the fact that we need to use $d$-invariant
calculations which were made for double covers of knots, by
Manolescu-Owens~\cite{Manolescu-Owens:2007} and
Cochran-Horn~\cite{Cochran-Horn:2012-1}.

By replacing $\beta$ with $\beta^{-1}$ and reindexing the knots $K_i$
if necessary, we may assume that $a_1 > 0$.  Suppose for a
contradiction that $\beta$ is $\Z_{(p)}$-homology $(n+1)$-negative.
Applying Theorem~\ref{theorem:covering-positon-theorem}, we see that the knot
\[
\bigconnsum_{j=1}^s \eps_{j}\bigg(K_{i_j} \# K_{i_j}^r \#
\Big(\bigconnsum_{k=1}^{n-1} \wt{C_{k}(K_{i_j})}\Big)\bigg)
\]
is $\Z_{(p)}$-homology 1-negative, since it is the closure of a height
$n$ covering string link of $\beta$ by
Corollary~\ref{corollary:iterated-string-link-covering-for-main-example}
and Lemma~\ref{lemma:covering-string-link-and-product}.  Here, by the
description of the ambient space of our covering links in
Corollary~\ref{corollary:iterated-string-link-covering-for-main-example},
the knot lies in a 3-ball in the connected sum, say $N$, of
3-manifolds of the form $N_{p^a}(\eps_{j}\widetilde{C_{k}(K_{i_j})})$,
where either $k\ge 2$ or $(k,p^a)=(1,p)$; we note that the exponent
$a$ need not be the same for different summands.

Recall that $K_{i_j}$ is 0-bipolar.  So $C_k(K_{i_j})^{\eps_{j}}$ is
$2$-bipolar for $k\ge 2$ by
Lemma~\ref{lemma:bipolarity-of-C-doubling}.  Since the plat closure
$(-)\sptilde$ is obtained from the closure $(-)\sphat$ by band sum of
components, $\eps_{j}\widetilde{C_k(K_{i_j})}$ is $2$-bipolar for $k
\ge 2$ by
Theorem~\ref{theorem:basic-construction-and-positivity}~(\ref{theorem-case:positivity-band-sum}).
Similarly $\eps_{j}\widetilde{C_1(K_{i_j})}$ is $1$-bipolar.
Therefore $-\eps_{j}\widetilde{C_k(K_{i_j})}$ is $1$-negative for $k
\geq 1$.  By Theorem
\ref{theorem:basic-construction-and-positivity}~(\ref{theorem-case:positivity-band-sum}),
we have that the knot
  \[
  \bigconnsum_{j=1}^s \eps_{j}\bigg(K_{i_j} \# K_{i_j}^r \#
  \Big(\bigconnsum_{k=1}^{n-1} \wt{C_{k}(K_{i_j})}\#
  -\wt{C_{k}(K_{i_j})}\Big)\bigg),
  \]
  which is again in a 3-ball in $N$, is $\Z_{(p)}$-homology
  $1$-negative.  Thus, by
  Theorem~\ref{theorem:basic-construction-and-positivity}~(\ref{theorem-case:positivity-slice-link}),
  the knot
  \[
  \bigconnsum_{j=1}^s \eps_{j} (K_{i_j} \# K_{i_j}^r),
  \]
  which is in a 3-ball in $N$, is $\Z_{(p)}$-homology $1$-negative.
  Since the connected sum operation is commutative, the knot
  \[
  J:= \bigconnsum_i a_i (K_i \# K_i^r)
  \]
  lying in a 3-ball in $N$ is $\Z_{(p)}$-homology $1$-negative.

  We will remove many summands from the ambient space $N$ of $J$,
  without altering the $\Z_{(p)}$-homology $1$-negativity.  Let $V$ be
  a $\Z_{(p)}$-homology $1$-negaton for~$J$.  First, for $k \ge 2$, we
  will remove all the $N_{p^a}(\eps_{j}\widetilde{ C_{k}(K_{i_j})})$
  summands.  Recall that $\widetilde{\eps_jC_k(K_{i_j})}$ is 2-bipolar for $k\ge
  2$.  So, by Theorem~\ref{theorem:covering-positon-theorem}, the
  branched cover $N_{p^a}(\eps_{j}\widetilde{C_k(K_{i_j})})$ bounds
  a $\Z_{(p)}$-homology 1-positon, say~$V_{i_j}$.  View $\partial V$
  as the union of a (many) punctured $S^3$ and a disjoint union of
  punctured $N_{p^a}(\eps_{j}\widetilde{C_k(K_{i_j})})$s glued along
  the boundary.  Viewing each $V_{i_j}$ as a (relative to the
  boundary) cobordism from a punctured
  $N_{p^a}(\eps_{j}\widetilde{C_k(K_{i_j})})$ to $B^3$, and attaching
  each $-V_{i_j}$ to $V$ along the punctured
  $N_{p^a}(\eps_{j}\widetilde{C_k(K_{i_j})})$ for $k\ge 2$, we obtain
  a 4-manifold $W$ whose boundary is a connected sum, say $N'$, of
  3-manifolds of the form $N_p(\eps_{j}\widetilde{C_1(K_{i_j})})$.
  Moreover, $W$ is a $\Z_{(p)}$-homology $1$-negaton for the knot $J$
  which is now considered as a knot in~$N'$.  This can be seen by an
  easy Mayer-Vietoris argument which shows that
  \[
  \textstyle
  H_2(W)/\text{torsion} \cong
  \big(H_2(V)/\text{torsion}\big) \oplus \big(\bigoplus
  H_2(V_{i_j})/\text{torsion}\big),
  \]
  and by observing that the change in orientation of the $V_{i_j}$
  causes their intersection forms to be negative definite.

  The argument above for removing the $N_{p^a}(\eps_{j}\widetilde{
    C_{k}(K_{i_j})})$ summands for $k\ge 2$ also works in the case
  $k=1$, when $\eps_{j} =1$, since $K_{i_j}$ is 1-positive and so
  $\widetilde{C_1(K_{i_j})}$ is $2$-positive.  So we can in fact
  assume that $J$ is a $\Z_{(p)}$-homology 1-negative knot in a 3-ball
  in a connected sum of 3-manifolds of the form
  $N_{p}(-\widetilde{C_{1}(K_{i_j})})$.

  Furthermore, since $a_1>0$ and $K_1$ is $1$-positive by our
  assumption, it follows that
  \[
  J' = K_1 \# \Big(\bigconnsum_{i\ne 1} a_i (K_i^{\vphantom{r}}\# K_i^r)\Big)
  \]
  which is in the same ambient space is $\Z_{(p)}$-homology
  $1$-negative.

  We will derive a contradiction by using $d$-invariants.  We remark
  that we essentially follow the argument in
  \cite[Section~4,~Proof~of~Theorem~1.1]{Cochran-Horn:2012-1}, with
  additional complication required to resolve the difficulty from
  the remaining 3-manifold summands in the ambient space of~$J'$.

  From now on we restrict to $p=2$.  Let $\Sigma$ be the double
  branched cover of~$J'$.  It is easily seen that $\Sigma$ is the
  connected sum of $N_2(K_1)$, $a_iN_2(K_i\#K_i^r)$ with $i\ne 1$, and
  additional summands of the form $N_2(-\widetilde{C_1(K_{i_j}^r)})$.
  By combining Figures~\ref{figure:knot-doubling}
  and~\ref{figure:plat-closure}, observe that the knot
  $-\widetilde{C_1(K_{i_j}^r)}$ is the Whitehead double of
  $-K_{i_j}^r$ with positive clasp.  So
  $N_2(-\widetilde{C_1(K_{i_j}^r)})$ is a homology sphere.  It follows
  that $N_2(-\widetilde{C_1(K_{i_j}^r)})$ has a unique spin$^c$
  structure, and the spin$^c$ structures of
  \[
  Y:= N_2(K_1) \# \Big(\bigconnsum_{i\ne 1} a_i N_2(K_i\#K_i^r) \Big)
  \]
  are in 1-1 correspondence with those of~$\Sigma$.  We need the
  following fact concerning the knots~$K_i$:

  \begin{proposition}[{\cite[Section~4,~Proof~of~Theorem~1.1]{Cochran-Horn:2012-1}}]
    \label{proposition:cochran-horn-knot-d-invariant}
    There is a spin$^c$ structure $\mathfrak{s}$ on $Y$ such that
    $d(Y,\mathfrak{s}) < 0$ and the corresponding first homology class
    lies in any metabolizer.
  \end{proposition}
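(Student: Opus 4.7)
The plan is to follow the strategy of Cochran--Horn, reducing the statement to explicit $d$-invariant and linking form computations for the specific knots $K_i$ they constructed. First I would use the standard identifications
\[
N_2(K_i \# K_i^r) \cong N_2(K_i)\, \#\, N_2(K_i^r) \cong N_2(K_i)\, \#\, (-N_2(K_i)),
\]
where the orientation-reversing identification on $H_1$ induces a sign change on the $\Q/\Z$-valued linking form. Thus for $i\ne 1$, each of the $a_i$ copies of $N_2(K_i\#K_i^r)$ contributes a hyperbolic block to the linking form on $H_1(Y)$, in which the diagonal subgroup (or rather, the graph of the sign-changing identification) is a canonical metabolizer.

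Next, using additivity of the linking form under connected sum, I would analyze possible metabolizers $H$ of $H_1(Y)$. The metabolizer condition $|H|^2 = |H_1(Y)|$ together with the hyperbolic structure on the $i\ne 1$ summands forces any metabolizer to project in a controlled way onto $H_1(N_2(K_1))$: specifically, the projection must be a metabolizer of the linking form on $H_1(N_2(K_1))$. The key input, which is the heart of Cochran--Horn's construction, is that $K_1$ was chosen so that $H_1(N_2(K_1))$ has a distinguished nonzero element $z_0$ lying in every metabolizer of its linking form, and the $\spin^c$ structure $\mathfrak{s}_0 + \hat z_0$ (with $\mathfrak{s}_0$ a spin structure) satisfies $d(N_2(K_1), \mathfrak{s}_0 + \hat z_0) < 0$. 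This part relies on explicit $d$-invariant formulas for double branched covers of two-bridge-like knots due to Manolescu--Owens~\cite{Manolescu-Owens:2007}.

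Finally, I would define $\mathfrak{s}$ on $Y$ by taking $\mathfrak{s}_0 + \hat z_0$ on the $N_2(K_1)$ factor, and spin structures on the remaining $N_2(K_i\#K_i^r)$ factors. Additivity of the $d$-invariant under connected sum gives
\[
d(Y, \mathfrak{s}) \;=\; d(N_2(K_1), \mathfrak{s}_0 + \hat z_0) \;+\; \sum_{i\ne 1} a_i \cdot 0 \;<\; 0.
\]
The corresponding homology class in $H_1(Y)$ equals $(z_0, 0, \ldots, 0)$ under the connected-sum decomposition, and by the previous paragraph its projection into $H_1(N_2(K_1))$ must lie in every metabolizer, while the zero components on the hyperbolic factors automatically lie in every metabolizer of those summands. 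Hence $\mathfrak{s}$ satisfies both required conditions.

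The main obstacle is establishing the two properties of $K_1$ simultaneously: the universal-metabolizer property of $z_0$ and the strict negativity $d(N_2(K_1), \mathfrak{s}_0 + \hat z_0) < 0$. This forces a delicate choice of $K_i$ from a family of knots whose branched double cover linking forms admit a complete enumeration of metabolizers, and whose $d$-invariants on all metabolizer spin$^c$ structures can be explicitly computed and shown to avoid cancellation in the connected sum; this is precisely the technical content carried out in~\cite{Cochran-Horn:2012-1}.
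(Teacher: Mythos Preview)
The paper does not prove this proposition; it simply cites it from Cochran--Horn, so there is no argument in the paper to compare against beyond the attribution. Your outline is in the right spirit, but there is a genuine gap in the metabolizer step.

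The claim that ``the hyperbolic structure on the $i\ne 1$ summands forces any metabolizer to project [to] a metabolizer of the linking form on $H_1(N_2(K_1))$'' is false as stated, and the concluding step that $(z_0,0,\ldots,0)$ lies in every metabolizer does not follow from the pieces you assemble. Metabolizers of an orthogonal sum need not split as products of metabolizers of the summands, even when one summand is hyperbolic. For a concrete counterexample, take $A=\Z/9$ with form $a\mapsto a^2/9$ (unique metabolizer $\langle 3\rangle$) and $B=(\Z/9)^2$ with the hyperbolic form; then $H=\langle (1,1,4),(0,3,6)\rangle\subset A\oplus B$ is a metabolizer of order $27$ that does \emph{not} contain $(3,0,0)$. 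So knowing that $z_0$ lies in every metabolizer of $H_1(N_2(K_1))$ and that $0$ lies in every subgroup of the other factors does not yield $(z_0,0,\ldots,0)\in H$ for an arbitrary metabolizer $H$ of~$H_1(Y)$.

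What makes the Cochran--Horn argument work is an additional feature of their knots that your outline omits: the $K_i$ are chosen so that the orders $|H_1(N_2(K_i))|$ are powers of \emph{distinct} primes. By the primary decomposition of finite abelian groups, any subgroup of $H_1(Y)$ then splits as a direct sum over the $p_i$-primary pieces, and the metabolizer conditions force each piece to be a metabolizer of the corresponding summand. With this coprimality in hand, your element $(z_0,0,\ldots,0)$ does lie in every metabolizer, and the additivity argument for $d$ goes through as you wrote. You should insert this coprimality hypothesis explicitly and use it to justify the splitting of metabolizers; without it the argument breaks down.
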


  In our case, the corresponding spin$^c$ structure $\mathfrak{t}$ of
  $\Sigma$ also represent a homology class lying in any metabolizer,
  and we have $d(\Sigma,\mathfrak{t})=d(Y,\mathfrak{s}) + \sum_\ell
  d(Y_\ell)$ where the $Y_\ell$ denote the
  $N_2(-\widetilde{C_1(K_{i_j}^r)})$ summands and $d(Y_\ell)$ denotes
  the $d$-invariant associated to the unique spin$^c$-structure.  By
  \cite[Theorem~1.5]{Manolescu-Owens:2007}, $d(Y_\ell)\le 0$ since
  $-\widetilde{C_1(K_{i_j}^r)}$ is a positive Whitehead double.  It
  follows that $d(\Sigma,\mathfrak{t})<0$.  By the $\Z_{(2)}$-homology
  1-negative version of Theorem~\ref{theorem:d-invariant-obstruction},
  this contradicts that $J'$ is $\Z_{(2)}$-homology 1-negative.
  \qed

\appendix

\section{Signature invariants and 0-positivity of knots in
  $\Z_{(p)}$-homology spheres}
\label{section:signature-and-0-positivity}

In this appendix we give a proof that the signature invariant of knots
in rational homology spheres defined in~\cite{Cha-Ko:2000-1} gives an
obstruction to $\Z_{(p)}$-homology $0$-positivity.  This is a
generalization of \cite[Proposition~4.1]{Cochran-Harvey-Horn:2012-1}

We begin by describing the invariant following~\cite{Cha-Ko:2000-1}.
Suppose $K$ is a knot in a rational homology 3-sphere~$Y$.  A surface
$F$ embedded in $Y$ is called a \emph{generalized Seifert surface} for
$K$ if for some $c\ne 0$, $F$ is bounded by the union of $c$ parallel
copies of $K$ which are taken along a framing on $K$ which agrees with
the framing induced by $F$ on each parallel copy.  The integer $c$ is
called the \emph{complexity} of~$F$.  See Lemma
\ref{lemma:positivity-zero-framing} and the preceding discussion on
zero-framings of knots in rational homology spheres.  A Seifert matrix
$A$ is defined as usual: choosing a basis $\{x_i\}$ of $H_1(F)$,
$A=(\lk_Y(x_i^+,x_j))_{ij}$ where $x_i^+$ is obtained by pushing $x_i$
slightly along the positive normal direction of~$F$.  Note that here
the linking number $\lk_Y$ is rational-valued.  Define, for $\theta\in
\R$,
\[
\sigma_A(\theta)=\sign\big( (1-e^{2\pi i \theta}) A + (1-e^{-2\pi i
  \theta}) A^T \big)
\]
and let
\[
\overline\sigma_A(\theta)=\frac12 \Big(\lim_{\phi\to
  \theta_-}\sigma_A(\phi)+\lim_{\phi\to \theta_+}\sigma_A(\phi)\Big)
\]
be the average of the one-sided limits.  Now the \emph{signature
  average function for $K$} is defined by
$\overline\sigma_K(\theta)=\overline\sigma_A(\theta/c)$, where $c$ is
the complexity of the generalized Seifert surface~$F$.  Due to
\cite{Cha-Ko:2000-1}, the function $\overline\sigma_K\colon \R\to \Z$
is invariant under a concordance in a rational homology
$S^3\times I$.\footnote{In~\cite{Cha-Ko:2000-1}, they consider the
  jump at $\theta$, rather than the average, as a concordance
  invariant.  It is easy to see that the average function determines the
  jump function and vice versa.}  For knots in $S^3$,
$\overline\sigma_K$ is equal to the Levine-Tristram signature.

\begin{theorem}
  If $K$ is $\Z_{(p)}$-homology $0$-positive, then
  $\bar\sigma_K(\theta)\le 0$ for any $\theta\in \R$.
\end{theorem}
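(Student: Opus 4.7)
The plan is to adapt the proof of the $S^3$ case \cite[Proposition~4.1]{Cochran-Harvey-Horn:2012-1} by replacing $\Z$-coefficients with $\Z_{(p)}$-coefficients at the key points. The underlying strategy is the classical one: realize $\overline\sigma_K(\theta)$ as a twisted signature defect of the $0$-positon $V$, and then use positive-definiteness to bound this defect from above.

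First, by Lemma~\ref{lemma:positivity-zero-framing}, $K$ admits a zero-framing, so $\overline\sigma_K(\theta)$ is well defined via a generalized Seifert surface $F$ of some complexity~$c$. Let $V$ be a $\Z_{(p)}$-homology $0$-positon with slicing disk~$\Delta$. By Lemma~\ref{lemma:basic-properties-of-Z_p-positon}~(\ref{lemma-case:H1-of-slice-disk-complement-in-positon}), $H_1(V-\Delta;\Z_{(p)}) \cong \Z_{(p)}$, so tensoring with $\Q$ gives $H_1(V-\Delta;\Q) \cong \Q$, generated by a meridian $\mu$ of~$\Delta$. For each $\theta \in \R$, this yields a unitary character $\chi_\theta\colon \pi_1(V-\Delta) \to U(1)$ sending $\mu$ to $e^{2\pi i\theta}$.

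Next, I would establish a signature defect formula of the shape
\[
\overline\sigma_K(\theta) \;=\; \sigma_{\chi_\theta}(V-\Delta) \;-\; \sigma(V-\Delta),
\]
where $\sigma_{\chi_\theta}$ is a twisted signature. For rational $\theta = k/d$ with $d$ a multiple of~$c$, this twisted signature can be defined using the $G$-signature of a $d$-fold cyclic cover of $V-\Delta$ corresponding to the composition $\pi_1(V-\Delta) \to H_1(V-\Delta;\Q) \cong \Q \to \Z_d$; for irrational $\theta$ one passes to a limit, or uses the Cheeger--Gromov $L^2$-signature directly. The defect formula itself is proved by gluing $V$ to a ``standard model'' 4-manifold built from the generalized Seifert surface $F$ (pushed into the collar $Y\times I$), so that the ordinary signature of the cyclic cover decomposes into Cha--Ko's $\overline\sigma_K(\theta)$ on the Seifert-surface side and into the twisted minus untwisted signatures of $V-\Delta$ on the positon side.

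Finally, positive-definiteness closes the argument. By Lemma~\ref{lemma:basic-properties-of-Z_p-positon}~(\ref{lemma-case:H2-of-positon}) tensored with $\Q$, the intersection form on $H_2(V-\Delta;\Q) \cong H_2(V;\Q)$ is positive definite of some rank $b$, so $\sigma(V-\Delta) = b$. On the other hand, the signature of any Hermitian form is bounded above by its rank, and Poincar\'e--Lefschetz duality together with $H_1(V;\Z_{(p)}) = 0$ ensures that the twisted intersection form on $V-\Delta$ has rank at most~$b$. Hence $\sigma_{\chi_\theta}(V-\Delta) \le b$, and substitution gives $\overline\sigma_K(\theta) \le 0$. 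The main obstacle is the non-simply-connected version of the defect formula: one has to track how the twisted cohomology and intersection form behave when $V$ only has vanishing $H_1$ with $\Z_{(p)}$ (as opposed to $\Z$) coefficients, and verify that the rank bound on the twisted form is preserved in this weaker setting.
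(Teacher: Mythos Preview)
Your proposal is essentially correct and follows the same overall strategy as the paper: realize $\overline\sigma_K$ as a signature defect for the positon and then bound the twisted term by $b_2(V)$ using positive-definiteness.

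Two minor differences are worth noting. First, the paper does not work with characters on $V-\Delta$; instead it reduces to the case $\partial F=K$ (complexity~$1$) by replacing $K$ with its $(n,1)$-cable via Theorem~\ref{theorem:basic-construction-and-positivity}~(\ref{theorem-case:positivity-generalize-satellite}), and then phrases everything in terms of eigenspace signatures $\sigma_{k,d}(V,\Delta)$ of the $d$-fold \emph{branched} cover of $V$ along~$\Delta$. The defect formula $\sigma_A(k/d)=\sigma_{k,d}(V,\Delta)-\sign(V)$ then follows from a separate invariance lemma (Lemma~\ref{lemma:invariance-of-eigenspace-signature-defect}) combined with the Cha--Ko computation $\sigma_A(k/d)=\sigma_{k,d}(Y\times I,F')$. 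This is equivalent to your gluing sketch, but the cabling reduction saves some bookkeeping with the complexity~$c$. Second, the paper only proves $\sigma_A(k/d)\le 0$ for $d\nmid k$ and then invokes density, so there is no need for limits or $L^2$-signatures. The rank bound you flag as the main obstacle is handled exactly as in \cite[Proposition~4.1]{Cochran-Harvey-Horn:2012-1}, via the eigenspace-refined Euler characteristic argument; the $\Z_{(p)}$-homology hypothesis enters only through Lemma~\ref{lemma:basic-properties-of-Z_p-positon}.
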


\begin{proof}
  First note that $\bar\sigma_K$ is defined by
  Lemma~\ref{lemma:positivity-zero-framing}.  Suppose $V$ is a
  $\Z_{(p)}$-homology $0$-positon for an $K\subset Y$, with slicing
  disk~$\Delta$.  Choose a generalized Seifert surface $F$ for~$K$,
  and let $A$ be the Seifert matrix.  It suffices to show that
  $\sigma_A(\theta)\le 0$ on a dense subset of~$\R$.  By using the
  observation in \cite[p.~1178]{Cha-Ko:2000-1} that the Seifert
  pairing vanishes at $(x,y)$ whenever either $x$ or $y$ is a boundary
  component of $F$, it can be seen that the Seifert matrix of $A$ is
  $S$-equivalent to the Seifert matrix of a new surface obtained by
  attaching half-twisted bands to the boundary of $F$ in such a way
  that the new boundary is the $(n,1)$-cable of~$K$.  Since the
  $(n,1)$-cable of $K$ is $\Z_{(p)}$-homology $0$-positive by
  Theorem~\ref{theorem:basic-construction-and-positivity}~(\ref{theorem-case:positivity-generalize-satellite}),
  we may assume that $\partial F=K$.

  We will use the following fact: consider a rational homology
  3-sphere $\Sigma$ and a framed knot $J$ in~$\Sigma$.  Suppose
  $(W,M)$ is a $(4,2)$-manifold pair with $M$ framed (i.e.\ we
  identify $\nu(M)$ with $M\times D^2$), such that $(\Sigma,J)$ is a
  component of $\partial(W,M)$, $\tilde H_*(\partial W;\Q)=0$, and
  $(W,M)$ is primitive in the sense of \cite[p.~1170]{Cha-Ko:2000-1},
  that is, there is a homomorphism $H_1(W-M)\to \Z$ whose restriction
  on the circle bundle of $M$ coincides with $H_1(M\times S^1) \to
  H_1(S^1)=\Z$ induced by the projection.  Let $\tilde W$ be the
  $d$-fold cyclic branched cover of $W$ along $M$, and $t\colon \tilde
  W\to \tilde W$ be the generator of the covering transformation group
  corresponding to the (positive) meridian of~$J$.  Let
  $\sigma_{k,d}(W,M)$ be the signature of the restriction of the
  intersection pairing on the $e^{2\pi i k/d}$-eigenspace of
  $t_*\colon H_2(\tilde W;\C) \to H_2(\tilde W;\C)$.

  \begin{lemma}
    \label{lemma:invariance-of-eigenspace-signature-defect}
    The value $\sigma_{k,d}(W,M)-\sign(W)$ is determined by
    $(\Sigma,J)$, independently of the choice of $(W,M)$.
  \end{lemma}

  Some special cases of
  Lemma~\ref{lemma:invariance-of-eigenspace-signature-defect}, at
  least, seem to be folklore in the classical signature theory (e.g.,
  see \cite[Theorem~4.4]{Viro:1973-1} for the case of
  $W=\Sigma\times I$).  The above general case is essentially
  proven by an argument of \cite[Lemma~4.2]{Cha-Ko:2000-1}, although
  \cite[Lemma~4.2]{Cha-Ko:2000-1} is stated with a slightly stronger
  hypothesis to eliminate the $\sign(W)$ term.  For the reader's
  convenience we will give a proof later.

  Let $F'\subset Y\times I$ be the framed surface obtained by
  pushing the interior of $F\subset Y=Y\times 0$ slightly into
  $Y\times(0,1)$.  In \cite[Lemma~4.3]{Cha-Ko:2000-1}, it was shown
  that $\sigma_A(k/d)=\sigma_{k,d}(Y\times I, F')$.  Since
  $\sign(Y\times I)=0$, it follows that $\sigma_A(k/d) =
  \sigma_{k,d}(V,\Delta)-\sign(V)$ by
  Lemma~\ref{lemma:invariance-of-eigenspace-signature-defect}.

  The remaining part of the proof is now almost identical with that of
  \cite[Proposition~4.1]{Cochran-Harvey-Horn:2012-1}.  Since $V$ has
  positive definite intersection form, $\sign(V)=b_2(V)$.  Using the
  observations given above, the eigenspace-refined Euler
  characteristic argument of the last part of the proof of
  \cite[Proposition~4.1]{Cochran-Harvey-Horn:2012-1} is carried out to
  show that $\sigma_{k,d}(V,\Delta)\le b_2(V)$ whenever $d\nmid k$.
  Therefore $\sigma_A(k/d) \le 0$ for $d\nmid k$.  Since such $k/d$
  form a dense subset of $\R$, the proof is completed.
\end{proof}

\begin{proof}[Proof of Lemma~\ref{lemma:invariance-of-eigenspace-signature-defect}]
  Suppose both $(W,M)$ and $(W',M')$ are as in the description above
  the statement of
  Lemma~\ref{lemma:invariance-of-eigenspace-signature-defect}.  We
  consider the pair $(X,E)=(W,M)\cup_{(\Sigma,J)} -(W',M')$.  We
  identify the normal bundle of $E$ with $E\times D^2$ under the
  framing of~$E$.  By the assumption there is a map $X-E \to S^1$
  which restricts to the projection $E\times S^1 \to S^1$ on the
  circle bundle associated to the normal bundle of~$E$.  We may assume
  that it restricts to a constant map on $\partial X$, since
  $H_1(\partial X)$ is torsion.  Extend it to $f\colon X=X\times 0\to
  D^2$ by glueing the projection $E\times D^2 \to D^2$, and extend $f$
  to $g\colon U=X\times I \to D^2$ in such a way that the restriction
  of $g$ on $(\partial X \times I) \cup_{\partial X\times 1} (X\times
  1)$ is a constant map away from $0\in D^2$.  We may assume $g$ is
  smooth and $0\in D^2$ is a regular value.  Now $N=g^{-1}(0)$ is a
  submanifold in $U$ satisfying
  \[
  \partial(U,N)=(X\times 0,E)\cup(\partial X\times I,\emptyset)
  \cup (X\times 1,\emptyset).
  \]
  It follows that $\sigma_{k,d}(X,E)+\sigma_{k,d}(\partial X\times
  I,\emptyset)=\sigma_{k,d}(X,\emptyset)$.  It is well known that
  $\sigma_{k,d}(X,\emptyset)=\sign(X)$, namely, the eigenspace
  refined signature of the $d$-fold covering associated to the zero
  map $\pi_1(-)\to \Z_d$ is equal to the ordinary signature.  We also
  have $\sigma_{k,d}(\partial X\times I,\emptyset) = \sign(\partial
  X\times I)=0$, where the last equality holds since $\partial X\times
  I$ is a product.  By combining the above with Novikov additivity, it
  follows that
  \[
  \sigma_{k,d}(W,M)-\sigma_{k,d}(W',M') = \sigma_{k,d}(X,E) = \sign(X)
  = \sign(W)-\sign(W'). \qedhere
  \]
\end{proof}

\section{Amenable von Neumann $\rho$-invariants and $n$-positivity}
\label{section:rho-invariant-and-positivity}

In this appendix we discuss the relationship between amenable von
Neumann $\rho$-invariants and $\Z_{(p)}$-homology $n$-positivity.

\begin{theorem}
  \label{theorem:rho-invariant-and-positivity}
  Suppose a knot $K$ has a $\Z_{(p)}$-homology $n$-positon $V$ with
  slicing disk~$\Delta$.  Let $M(K)$ be the zero-surgery manifold
  of~$K$.  If $\phi\colon\pi_1(M(K))\to\Gamma$ is a homomorphism into
  an amenable group $\Gamma$ lying in Strebel's class $D(\Z_p)$ that
  sends a meridian of $K$ to an infinite order element and extends to
  $\psi\colon \pi_1(V-\Delta)\to \Gamma$, then the von Neumann
  invariant $\rhot(M(K),\phi)$ is nonpositive.  In addition, if
  $\psi(\cP^n \pi_1(V-\Delta))=\{e\}$ \textup{(}for example when
  $\cP^n \Gamma=\{e\}$\textup{)}, then $\rhot(M(K),\phi)=0$.
\end{theorem}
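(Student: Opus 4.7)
The plan is to reduce $\rhot(M(K),\phi)$ to an $L^2$-signature defect over a $4$-manifold derived from $V$. Let $W := V \setminus \nu(\Delta)$. By Lemma~\ref{lemma:positivity-zero-framing} (used in the proof of Theorem~\ref{theorem:basic-construction-and-positivity}~(\ref{theorem-case:blow-down})), the framing of $\Delta$ restricts to the zero-framing of $K$, so $\partial W = M(K)$. Since $\pi_1(W) = \pi_1(V - \Delta)$, the homomorphism $\psi$ extends $\phi$ across $W$. Because $\Gamma$ is amenable, lies in Strebel's class $D(\Z_p)$, and $\phi$ sends the meridian to an element of infinite order, the $L^2$-signature defect theorem from~\cite{Cha:2010-01} applies and yields
\[
\rhot(M(K),\phi) = \sign^{(2)}(W,\psi) - \sign(W).
\]

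Next I would compute $\sign(W)$. Mayer-Vietoris applied to $V = W \cup \nu(\Delta)$, combined with Lemma~\ref{lemma:basic-properties-of-Z_p-positon}~(\ref{lemma-case:H1-of-slice-disk-complement-in-positon}), which gives that the meridian of $\Delta$ generates $H_1(V-\Delta;\Z_{(p)})$, shows that inclusion induces isomorphisms $H_2(W;R) \cong H_2(V;R)$ for $R \in \{\Q,\Z_p\}$. Since $V$ is positive definite, so is $W$, and $\sign(W) = \rank H_2(W;\Q) = \rank H_2(V;\Q)$. By the proof of Lemma~\ref{lemma:basic-properties-of-Z_p-positon}, the torsion in $H_2(V)$ has order coprime to $p$, so this rank also equals $\dim_{\Z_p} H_2(W;\Z_p)$.

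For the bound on $\sign^{(2)}(W,\psi)$, I would combine the universal estimate $|\sign^{(2)}(W,\psi)| \le \ldim H_2(W;\mathcal{N}\Gamma)$ with the $\Z_p$-coefficient $L^2$-Betti number inequality from~\cite{Cha:2010-01} for amenable groups in $D(\Z_p)$:
\[
\ldim H_k(W;\mathcal{N}\Gamma) \le \dim_{\Z_p} H_k(W;\Z_p).
\]
Setting $k=2$ and chaining the inequalities yields $|\sign^{(2)}(W,\psi)| \le \sign(W)$, so $\rhot(M(K),\phi) \le 0$.

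For the vanishing statement, suppose $\psi(\cP^n \pi_1(V-\Delta)) = \{e\}$. By Definition~\ref{definition:homology-n-positon}, each generating surface $S_j$ satisfies $\psi(\pi_1(S_j)) = \{e\}$, so each $S_j$ lifts to the $\Gamma$-cover $\wt W \to W$. Fixing such lifts $\wt S_j$, the $\mathcal{N}\Gamma$-translates of their fundamental classes span a Hilbert $\mathcal{N}\Gamma$-submodule of $H_2(W;\mathcal{N}\Gamma)$ of $\ldim$ equal to $\rank H_2(V;\Q)$; by the Cha inequality above this already matches the upper bound on $\ldim H_2(W;\mathcal{N}\Gamma)$, so the submodule exhausts $H_2(W;\mathcal{N}\Gamma)$. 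On this submodule the $L^2$-intersection form reduces to the ordinary intersection form of the $S_j$, taking values in $\Z \subset \mathcal{N}\Gamma$, hence $\sign^{(2)}(W,\psi) = \sign(W)$ and $\rhot(M(K),\phi) = 0$. The main obstacle is precisely this $L^2$-dimension exhaustion argument, which relies essentially on Cha's amenable Betti number inequality — the only available tool bounding the $L^2$-dimension in terms of $\Z_p$-dimension outside the PTFA setting — together with the verification that the equivariant intersection matrix of the $\wt S_j$ agrees with the classical one, i.e., that translates of disjoint embedded lifts remain disjoint.
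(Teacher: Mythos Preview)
Your argument for nonpositivity is the paper's: both set $W = V \setminus \nu(\Delta)$, write $\rhot(M(K),\phi) = \lsign_\Gamma(W) - \sign(W)$, bound $|\lsign_\Gamma(W)| \le \ldim H_2(W;\N\Gamma) \le b_2(W;\Z_p)$ via Cha's amenable inequality, and use Lemma~\ref{lemma:basic-properties-of-Z_p-positon} to match this with $\sign(W)$.

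For the vanishing statement the routes diverge. You argue directly on $W$: the lifts $\wt S_j$ have equivariant intersection matrix equal to the identity (embedded disjoint surfaces lift to disjoint $\Gamma$-orbits), so they span a free $\N\Gamma$-submodule of $\ldim = r$, which by the Betti bound equals $\ldim H_2(W;\N\Gamma)$; since the form is nondegenerate on the span, its orthogonal complement has $\ldim = 0$ and contributes nothing to $\lsign_\Gamma$, giving $\lsign_\Gamma(W) = r = \sign(W)$. (Your phrasing that the submodule ``exhausts'' $H_2(W;\N\Gamma)$ is not quite right---equal $\ldim$ does not force equality of $\N\Gamma$-modules---but the orthogonal-complement formulation repairs this.) The paper instead stabilizes: it forms $W \# r\,\overline{\C P^2}$, uses the $S_j$ together with the exceptional $\C P^1$'s to build elements $\ell_i = [S_i]+[P_i]$, $d_i = [S_i]$ satisfying $\lambda_n(\ell_i,\ell_j)=0$ and $\lambda_n(\ell_i,d_j)=\delta_{ij}$, and then invokes the proof of \cite[Theorem~3.2]{Cha:2010-01}. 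Your approach is more self-contained and avoids the $\overline{\C P^2}$ trick; the paper's route packages the $\ldim$-counting inside a pre-existing Lagrangian-style vanishing theorem, at the cost of the stabilization step.
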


Our proof is a combination of the idea of its homotopy $n$-positive
analogue (see \cite[Theorem~5.8]{Cochran-Harvey-Horn:2012-1}) and the
technique for amenable $L^2$-invariants in~\cite{Cha:2010-01}.

\begin{proof}
  Consider $W_0=V-\nu(D)$ whose boundary is~$M(K)$.  The invariant
  $\rhot(M(K),\phi)$ is given by (or defined to be) the
  $L^2$-signature defect $\lsign_\Gamma(W_0)-\sign(W)$, where
  $\lsign_\Gamma(W_0)$ is the $L^2$-signature of the intersection
  pairing defined on $H_2(W_0;\N \Gamma)$ and $\N \Gamma$ is the group
  von Neumann algebra of~$\Gamma$.  Since $\Gamma$ is amenable and in
  $D(\Z_p)$, we have that the $L^2$-dimension $\ldim H_2(W_0;\N G)$ is
  not greater than the $\Z_p$-Betti number $b_2(W_0;\Z_p)$ by
  \cite[Theorem~3.11]{Cha:2010-01}.  It follows that
  $|\lsign_\Gamma(W_0)| \le b_2(W_0;\Z_p)$.  By
  Lemma~\ref{lemma:basic-properties-of-Z_p-positon}~(\ref{lemma-case:H2-of-positon}),
  $b_2(W_0;\Z_p)=b_2(W_0)$.  Since $W_0$ has positive definite
  intersection form, $b_2(W_0)=\sign(W_0)$.  It follows that
  $\rhot(M(K),\phi)\le 0$.

  To prove the remaining conclusions of the theorem, consider the
  connected sum $W$ of $W_0$ and $r$ copies of $\overline{\C P^2}$,
  where $r=b_2(V)$.  Then $W$ has the following properties:
  \begin{enumerate}
  \item $\partial(W)=M(K)$.
  \item $H_1(W)\cong\Z \oplus (\text{torsion coprime to $p$})$, and
    the meridian of $K$ represents an integer coprime to $p$ in
    $H_1(W)/\text{torsion}\cong \Z$.
  \item $H_2(W)=\Z^{2r}\oplus (\text{torsion coprime to $p$})$.  There
    exist elements $\ell_1,\ldots,\ell_r, d_1,\ldots, d_r \in
    H_2(W;\Z[\pi_1(W)/\cP^n \pi_1(W)])$ such that the images of the
    $\ell_i$, $d_j$ generate $H_2(W)$ modulo torsion,
    $\lambda_n(\ell_i,\ell_j)=0$ and
    $\lambda_n(\ell_i,d_j)=\delta_{ij}$, where $\lambda_n$ is the
    intersection pairing over $\Z[\pi_1(W)/\cP^n \pi_1(W)]$.
  \end{enumerate}
  (1) is obvious. (2) and the first statement of (3) follow
  immediately from Lemma~\ref{lemma:basic-properties-of-Z_p-positon}.
  Consider the surfaces $S_j$ given in the definition of
  $\Z_{(p)}$-homology $n$-positivity
  (Definition~\ref{definition:homology-n-positon}), and the surfaces
  $P_i =$ ($\C P^1$ in the $i$th $\overline{\C P^2}$ summand).  We may
  assume both $S_j$ and $P_i$ lie in~$W$.  Since $\pi_1(S_j)$,
  $\pi_1(P_i) \subset \cP^n \pi_1(W)$, the classes of $S_j$ and $P_i$
  are in $H_2(\Z[\pi_1(W)/\cP^n \pi_1(W)])$.  We have
  $\lambda_n([S_i],[S_j])=\delta_{ij}$, $\lambda_n([S_j],[P_i])=0$,
  and $\lambda_n([P_i],[P_j])=-\delta_{ij}$.  Therefore $\ell_i =
  [S_i]+[P_i]$ and $d_i = [S_i]$ satisfy~(3).

  If $\psi(\cP^n \pi_1(V-\Delta))=\{e\}$, then the proof of
  \cite[Theorem~3.2]{Cha:2010-01} is carried out without any change,
  using (1), (2) and~(3), to show $\rhot(M(K),\phi)=0$.
\end{proof}

\bibliographystyle{amsalpha} \renewcommand{\MR}[1]{}

\bibliography{research}

\end{document}